\theoremstyle{definition}
\newcommand{\nbb}{\mathbb{N}}
\newcommand{\rbb}{\mathbb{R}}
\newcommand{\cbb}{\mathbb{C}}
\newcommand{\CM}{\mathcal{CM}}
\newcommand{\Sc}{\mathcal{S}}
\newcommand{\F}[1]{\mathcal{F}\left[#1\right]}
\newcommand{\Kcos}{\mathcal{K}_{\cos}}
\newcommand{\Ksin}{\mathcal{K}_{\sin}}
\newcommand{\Khat}{\widehat{K}}
\newcommand{\la}{\langle}
\newcommand{\ra}{\rangle}
\newcommand{\dom}{\text{Dom}}
\newcommand{\ptil}{\widetilde{p}}
\newcommand{\qtil}{\widetilde{q}}
\newcommand{\phitil}{\widetilde{\varphi}}
\newcommand{\ubf}{\boldsymbol{u}}
\newcommand{\intx}{\int_0^t x(s)\mathrm{d} s}
\newcommand{\daw}{\textnormal{daw}}
\newcommand{\bbR}{\mathbb{R}}
\newcommand{\bbC}{\mathbb{C}}
\newcommand{\bbE}{\mathbb{E}}
\newcommand{\Ctilde}{\widetilde{C}}
\newcommand{\Wdot}{\dot{W}}
\newcommand{\sm}{\setminus\{0\}}
\newcommand{\erfc}{\textnormal{erfc}}
\newcommand{\erf}{\textnormal{erf}}
\renewcommand{\d}{\mathrm{d}}
\newcommand{\f}{\varphi}
\newcommand{\E}{\mathbb{E}}
\newcommand{\close}{\!\!\!}
\newcommand{\imag}{{\mathbf i}}
\numberwithin{equation}{section}
\theoremstyle{definition}
\theoremstyle{plain}
\newtheorem{theorem}{Theorem}[section]
\newtheorem{lemma}[theorem]{Lemma}
\newtheorem{assumption}[theorem]{Assumption}
\newtheorem{definition}[theorem]{Definition}
\newtheorem{remark}[theorem]{Remark}
\numberwithin{equation}{section}
\title{The generalized Langevin equation in harmonic potentials: Anomalous diffusion and equipartition of energy}
\author{Gustavo~Didier$^1$ and Hung D.~Nguyen$^2$}
\address{$^1$ Department of Mathematics, Tulane University, New Orleans, Louisiana, USA}
\address{$^2$ Department of Mathematics, University of California, Los Angeles, California, USA}
\begin{document}
\maketitle
\begin{abstract}

We consider the generalized Langevin equation (GLE) in a harmonic potential with power law decay memory. We study the anomalous diffusion of the particle's displacement and velocity. By comparison with the free particle situation in which the velocity was previously shown to be either diffusive or subdiffusive, we find that, when trapped in a harmonic potential, the particle's displacement may either be diffusive or superdiffusive. Under slightly stronger assumptions on the memory kernel, namely, for kernels related to the broad class of completely monotonic functions, we show that both the free particle and the harmonically bounded GLE satisfy the equipartition of energy condition. This generalizes previously known results for the GLE under particular kernel instances such as the generalized Rouse kernel or (exactly) a power law function.
\end{abstract}

\noindent{\it Keywords\/}: stationary random distributions, Abelian theorems, anomalous diffusion, equipartition of energy

\section{Introduction}

The classical Langevin equation describes the movement of a foreign particle freely suspended in Newtonian, viscous fluids. If the particle is further subjected to a harmonic potential $U(x)=\gamma x^2/2$, where $\gamma$ reflects the strength of the oscillator, the Langevin equation system is given by
\begin{equation} \label{eq:Langevin:2D}
\begin{aligned}
\dot{x}(t) &=v(t),\\
m\,\dot{v}(t)&=-\lambda v(t)-\gamma x(t)+\sqrt{2\lambda k_BT}\,\dot{W}(t).
\end{aligned}
\end{equation}
In \eqref{eq:Langevin:2D}, $(x(t),v(t))$ is a two-dimensional process, $m$ is the particle’s mass, $\lambda>0$ represents the viscous drag coefficient, $k_B$ is the Boltzman constant, $T$ is the temperature and $\{W(t)\}_{t\in\rbb}$ is a two--sided standard Brownian motion. However, unlike in a classical Langevin framework, fluid viscoelasticity induces time correlation between the foreign particle movement and molecular bombardment
\cite{didier2012statistical,didier2017asymptotic,didier2020asymptotic,kneller2011generalized,
kubo1966fluctuation,mason1995optical,mori1965transport}. To capture this memory effect,~\eqref{eq:Langevin:2D} is modified into the so--named generalized Langevin equation (GLE) system \cite{hohenegger2018reconstructing,kou2008stochastic,kou2004generalized}, namely,
\begin{equation} \label{eq:GLE:2D}
\begin{aligned}
\dot{x}(t) &=v(t),\\
m\,\dot{v}(t)&=-\lambda v(t)-\gamma x(t)-\beta\int_{-\infty}^t\close K(t-s)v(s) \d s+\sqrt{\beta k_BT}F(t)+\sqrt{2\lambda k_BT}\,\dot{W}(t).
\end{aligned}
\end{equation}
In~\eqref{eq:GLE:2D}, the function $K:\rbb\to\rbb^+$ is an even memory kernel that characterizes the delayed response of the fluid medium to the particle's past movement \cite{glatt2020generalized,mckinley2018anomalous}. In turn, $\{F(t)\}_{t\in\rbb}$ is a zero mean, stationary, Gaussian process that is linked to $K(t)$ via the relation
\begin{equation}\label{eq:fluctuation-dissipation}
\E[F(t)F(s)]=K(t-s).
\end{equation}
The equality in \eqref{eq:fluctuation-dissipation} expresses the so--called \emph{fluctuation–dissipation} relationship between $K$ and $F$. In other words, such relationship is the requirement that, in an equilibrium state, the covariance observed in thermal fluctuations be determined by the underlying memory kernel \cite{kubo1966fluctuation,mori1965transport,mori1965continued}.

In this paper, we provide two main sets of results on the long term behavior of a particle whose dynamics are given by the system~\eqref{eq:GLE:2D}. Namely, under broad assumptions, $(i)$ we asymptotically characterize the particle's (ensemble) mean squared displacement (MSD) assuming $\gamma > 0$ and $(ii)$ we establish that equipartition of energy holds assuming $\gamma \geq 0$ (which includes the free particle instance $\gamma = 0$ as in~\eqref{eq:GLE:1D} below). We now provide a more detailed description of each set of results.

By setting $\gamma = 0$ in~\eqref{eq:GLE:2D}, we arrive at
\begin{equation} \label{eq:GLE:1D}
m\,\dot{v}(t) = -\lambda v(t) -\beta\int_{-\infty}^t\close K(t-s)v(s) \d s+\sqrt{\beta k_BT}F(t) + \sqrt{2 \lambda k_B T} \dot{W}(t).
\end{equation}
Expression \eqref{eq:GLE:1D} is the GLE for a particle moving freely in a viscoelastic medium. Historically, this instance of the GLE was first proposed and studied in the seminal work~\cite{kubo1966fluctuation} and later popularized in \cite{mason1995optical,mori1965continued}. In the last several decades, ~\eqref{eq:GLE:1D} has attracted a great deal of attention due to its ability to model what is known as \emph{anomalous diffusion} \cite{kou2008stochastic,mckinley2018anomalous,morgado2002relation}. To be more precise, write $f(t)\sim g(t)$, $t\to\infty$, when, for some $c \in (0,\infty)$, $\lim_{t\to\infty}f(t)/g(t)=c$. A stochastic process is said to exhibit \textit{diffusive} behavior if its MSD grows linearly in time, i.e., $\E|\int_0^t v(s)\d s|^2\sim t$ as $t\to\infty$. Otherwise, if the growth rate is given by $t^\alpha$, where either $\alpha<1$ or $\alpha>1$, then the process is called \textit{sub}diffusive or \textit{super}diffusive, respectively. It was once a longstanding conjecture that the anomalously diffusive behavior of the stationary solution of~\eqref{eq:GLE:1D} was dictated by the decaying rate of the memory kernel $K$ \cite{morgado2002relation}. There have been several attempts to establish such conjecture by means of the asymptotic analysis of either Laplace \cite{kneller2011generalized,kupferman2004fractional,
morgado2002relation} or Fourier transforms \cite{didier2020asymptotic,kou2008stochastic,
mckinley2018anomalous}. Recently, anomalous diffusion for~\eqref{eq:GLE:1D} was fully characterized in terms of the memory kernel $K$. In other words, if $K$ is integrable, then it can be shown that the second moment $\E|\int_0^t v(s)\d s|^2$ grows linearly in time. On the other hand, if there exists $\alpha\in(0,1]$ such that $K(t)\sim t^{-\alpha}$ as $t\to\infty$, then for $\alpha\in(0,1)$, $\E|\int_0^t v(s)\d s|^2\sim t^\alpha$ \cite{mckinley2018anomalous}. Moreover, for $\alpha=1$, $\E|\int_0^t v(s)\d s|^2\sim t/\log(t)$ as $t\to\infty$ \cite{didier2020asymptotic}.

Anomalous diffusion has been mostly investigated for free particles. Nevertheless, there are many viscoelastic fluid systems in which the particle is trapped by a damped harmonic motion under the action of a stationary noise term that follows the fluctuation--dissipation relationship. In recent work~\cite{desposito2008memory,vinales2006anomalous}, similar systems to~\eqref{eq:GLE:2D} -- with the memory kernel restricted to the interval $[0,t]$, instead of $(-\infty,t]$ -- have been examined. Using a combination of Laplace analysis and Tauberian theorems, asymptotic expressions for the velocity autocorrelation functions were established in terms of the large scale (time) asymptotics of the memory kernel and the correlation function of the random force.

 In this paper, we employ the framework of weakly stationary random operators (\cite{mckinley2018anomalous}; see also \cite{didier2020asymptotic,gelfand1955generalized, ito1954stationary,  yaglom1957some}) to construct stationary solutions for the system~\eqref{eq:GLE:2D}. Moreover, following up on results for the MSD of the system~\eqref{eq:GLE:1D} \cite{didier2020asymptotic,kou2008stochastic,mckinley2018anomalous}, we use Fourier analysis \cite{soni1975slowly,soni1975slowlyII} to characterize the asymptotic behavior of the MSD of the bivariate stationary--increment process $\int_0^t (x(s),v(s))\d s$ in terms of the asymptotic decay rate of $K(t)$. Notably, whereas the process $v(t)$ as in~\eqref{eq:GLE:1D} may either be diffusive or subdiffusive depending on the memory kernel, in this paper we show that, for a large class of memory kernels $K$ (see Assumption~\ref{cond:K:general}), the process $x(t)$ in~\eqref{eq:GLE:2D} is either diffusive or superdiffusive (see Theorem~\ref{thm:asymptotic-growth}).

In the second set of main results, under slightly stronger assumptions on the memory kernel, we investigate the so--named \textit{equipartition of energy} condition for the solution pair $(x(t),v(t))$ for~\eqref{eq:GLE:2D} as well as for the solution $v(t)$ for~\eqref{eq:GLE:1D}. In Statistical Mechanics, it is well known that a stationary process in thermodynamical equilibrium~\cite{chandler1987introduction,
hill1986introduction,reif1965fundamentals} must satisfy such condition, i.e., any degree of freedom (e.g., particle position or velocity) appearing quadratically in the energy contributes $k_B T /2$ to the average kinetic energy of the system. However, the equipartition condition may hold even for out-of-equilibrium systems \cite{nichol2012equipartition}. Since such systems are commonly found in nature, the search for generalized equipartition laws and nonequilibrium relations is still a quite active research topic \cite{argun2016non,maggi2014generalized,to2014boltzmann}. One key motivation for studying the equipartition property in the framework of the GLE stems from the fact that \eqref{eq:GLE:2D} is a biophysical model~\cite{kou2008stochastic}; hence, it is of a matter of interest in practice whether or not an equilibrium condition generally holds.

To the best of the authors' knowledge, results on the equipartition of energy for instances of the GLE seem to have first been established in \cite{kou2008stochastic} based on memory kernels of the form
\begin{equation}\label{e:t^(-alpha)}
t^{-\alpha}, \quad \alpha\in (0,1).
\end{equation}
For the free particle case~\eqref{eq:GLE:1D}, it was shown that
\begin{equation}\label{e:E[mv(0)^2]=kBT_intro}
\E[m\, v(0)^2]=k_BT.
\end{equation}
In turn, under a harmonic potential as in \eqref{eq:GLE:2D} (with $\gamma > 0$), it was further proven that
\begin{equation}\label{e:equipartition_2D}
\E[\gamma\, x(0)^2]=k_BT =\E[m\, v(0)^2].
\end{equation}
In other words, relations \eqref{e:E[mv(0)^2]=kBT_intro} and \eqref{e:equipartition_2D} show that equipartition of energy holds in each case. More recently~\cite{hohenegger2017equipartition,hohenegger2017fluid,
hohenegger2018reconstructing}, relation \eqref{e:E[mv(0)^2]=kBT_intro} was established in the case of a free particle GLE~\eqref{eq:GLE:1D} assuming the so--named \textit{generalized Rouse} class of memory kernels, i.e.,
\begin{equation}\label{e:finite_sum_of_exp}
\frac{1}{N}\sum^{N}_{n=1} e^{-|t|/\tau_n},
\end{equation}
where $\tau_1 < \hdots < \tau_N$ are called \textit{relaxation times}. For such kernels, Fourier transforms are known in explicit form. This naturally allows for the use of contour integration in the complex plane and the calculation of the second moments of $x(t)$ and $v(t)$.

Note that, for a general potential $U(x)$, the system~\eqref{eq:GLE:2D} is recast in the form
\begin{equation} \label{eq:GLE:2D:potential}
\begin{aligned}
\dot{x}(t) &=v(t),\\
m\,\dot{v}(t)&=-\lambda v(t)-U'(x(t))-\beta\int_{-\infty}^t\close K(t-s)v(s) \d s+\sqrt{\beta k_BT}F(t)+\sqrt{2\lambda k_BT}\,\dot{W}(t).
\end{aligned}
\end{equation}
For several kernel instances having the form of an infinite sums of exponentials, the so--named Mori--Zwanzig formalism \cite{fricks2009time, goychuk2009viscoelastic, ottobre2011asymptotic,zwanzig2001nonequilibrium} can be used to produce a Markovian approximation to~\eqref{eq:GLE:2D:potential} which in turn admits a stationary distribution \cite{glatt2020generalized,pavliotis2014stochastic}. In particular, relation \eqref{e:equipartition_2D} holds under harmonic potentials and kernels $K$ that are either integrable \cite{ottobre2011asymptotic,pavliotis2014stochastic} or exhibit power law decay $K(t)\sim t^{-\alpha}$ for all $\alpha>1/2$ \cite{glatt2020generalized, herzog:mattingly:nguyen:2021}. The question of whether~\eqref{e:equipartition_2D} holds -- even under harmonic potentials -- for $K(t)\sim t^{-\alpha}$, $\alpha\in(0,1/2]$, remains open~\cite{herzog:mattingly:nguyen:2021}.

In this paper, we tackle the problem of establishing equipartition of energy for both~\eqref{eq:GLE:1D} and~\eqref{eq:GLE:2D}. Namely, we show that relations \eqref{e:E[mv(0)^2]=kBT_intro} and \eqref{e:equipartition_2D} hold under the former (see Theorem \ref{thm:equipartition-of-energy:1D:gamma=0:power-law.exponential}) and the latter (see Theorem~\ref{thm:equipartition-of-energy:2D:gamma>0:powerlaw.exponential}) systems, respectively. In both cases, we assume memory kernels either coming from the large class of \emph{completely monotonic} functions (cf.\ Definition~\ref{def:complete-monotone}) or which can be expressed as $\f(t^2)$, where $\f$ is a completely monotonic function. In particular, the former class includes the kernels \eqref{e:t^(-alpha)} and \eqref{e:finite_sum_of_exp}, whereas the latter class includes Gaussian and Cauchy kernels, namely, $e^{-t^2}$ and $(t^2+1)^{-\alpha}$, respectively \cite{spiechowicz2018quantum,spiechowicz2019superstatistics,spiechowicz2021energy}. Besides its great generality, the class of completely monotonic functions is made up of Laplace transforms of positive Radon measures, which is very convenient for the purpose of establishing analytical results (cf.\ Theorem~\ref{thm:completely-monotone:Bernstein}).

The paper is organized as follows. In Section~\ref{sec:preliminaries}, we introduce the notation as well as the assumptions. In Section~\ref{sec:main-results}, we state the main results of the paper, including Theorem~\ref{thm:asymptotic-growth} on the anomalous diffusion of~\eqref{eq:GLE:2D} and Theorem~\ref{thm:equipartition-of-energy:2D:gamma>0:powerlaw.exponential} on equipartition of energy. We address the well--posedness of~\eqref{eq:GLE:2D} as well as the proofs of the main results in Section~\ref{sec:proofs}. In the Appendix, we review the framework of stationary distributions that is employed in the construction of solutions for~\eqref{eq:GLE:2D}. We also recapitulate several properties of Fourier transforms of the memory kernels that are useful in establishing the main theorems. 

\section{Assumptions and preliminaries} \label{sec:preliminaries}
For a function $f:\rbb\to\cbb$, we define the Fourier transform of $f$ and its inverse as
\begin{displaymath}
\widehat{f}(\omega)=\int_\rbb f(t) e^{-\imag t\omega} \d t, \text{ and }	 \check{f}(t)=\frac{1}{2\pi}\int_\rbb f(\omega) e^{\imag t\omega}\d \omega.
\end{displaymath}
We will also make use of the Fourier cosine and sine transforms
$$
\Kcos(\omega)=\int_0^\infty\close K(t)\cos(\omega t)\d t\quad\text{and}\quad\Ksin(\omega)=\int_0^\infty\close K(t)\sin(\omega t)\d t,
$$
where the two integrals are understood in the sense of improper integrals. Let $\Sc$ be the Schwartz space of all smooth functions whose derivatives are rapidly decreasing. Recall that its dual space $\Sc'$ is the so--named class of tempered distributions on $\Sc$. For a given tempered distribution $g\in\Sc'$, we write $\F{g}$ to denote the Fourier transform of $g$ in $\Sc'$. Namely, for all $\varphi\in\Sc$,
\begin{align*}
\langle g,\widehat{\varphi}\rangle = \langle \F{g},\varphi\rangle,
\end{align*}
where $\la g,\widehat{\f} \ra$ denotes the action of a tempered distribution $g$ on a Schwartz function $\widehat{\f}$. It is well known that this transformation is a one--to--one relation in $\Sc'$.

Throughout the paper, we make the following assumptions on the memory kernel (cf.\ \cite{didier2020asymptotic, mckinley2018anomalous}).

\begin{assumption} \label{cond:K:general}
Let $K:\rbb\to \rbb \cup \{\infty\}$ be a real--valued function for $t \neq 0$ and which may be infinite at $t = 0$. We assume that
\begin{itemize}
\item[(I)] \label{cond1}
\begin{itemize}
\item[(a)] \label{cond1a} $K \in L^1_{loc}(\rbb)$ is symmetric around zero and positive for all nonzero $t$;
\item[(b)] \label{cond1b} $K(t) \to 0$ as $t \to \infty$ and is eventually decreasing;
\item[(c)] \label{cond1c} the improper integral $\Kcos(\omega)=\int_0^\infty \! K(t)\cos(\omega t) \, \d t$ is positive for all nonzero $\omega$.
\end{itemize}

\item[(II)] \label{cond2} Furthermore, $K(t)$ satisfies either
\begin{itemize}

\item[(a)]  $K(t)\in L^1(\rbb)$; or
\item[(b)]  $K(t)\sim t^{-1}$ as $t\to\infty$; or
\item[(c)] there exists $\alpha\in(0,1)$ such that $K(t)\sim t^{-\alpha}$ as $t\to\infty$.

\end{itemize}

\end{itemize}
\end{assumption}

Weakly stationary operators generalize stationary distributions in the sense of~\cite{gelfand1955generalized,ito1954stationary}. The conceptual details can be found in Appendix~\ref{sec:stationary-operator}. We now make use of (weakly) stationary operators to construct a weak solution for the system~\eqref{eq:GLE:2D}. The procedure consists in reexpressing the system \eqref{eq:GLE:2D} in terms of operators as applied to test functions, and then extracting (covariance) relations that will enter into the definition of a weak solution. Since \eqref{eq:GLE:2D} is a linear Gaussian system, then such covariance relations fully characterize the weak solution.

We begin by formally multiplying both sides the first equation of~\eqref{eq:GLE:2D} by a test function $\f\in\Sc$. Then, after integration by parts, we obtain
\begin{equation}\label{e:-int_x(t)varphi'(t)dt=int_v(t)varphi(t)dt}
-\int_\rbb x(t)\f'(t)\d t =\int_\rbb v(t)\f(t)\d t.
\end{equation}
Moreover, again by integration by parts,
\begin{equation}\label{e:-int_v(t)varphi'(t)dt=int_v'(t)varphi(t)dt}
\int_\rbb v'(t)\f(t)\d t = -\int_\rbb v(t)\f'(t)\d t.
\end{equation}
Also, for $K(t)$ as in \eqref{eq:GLE:2D} and for a test function $\f\in\Sc$, let
\begin{equation}\label{e:K+(t)}
K^+(t)=K(t)1_{[0,\infty)}(t).
\end{equation}
Then, based on relations \eqref{e:-int_x(t)varphi'(t)dt=int_v(t)varphi(t)dt}, \eqref{e:-int_v(t)varphi'(t)dt=int_v'(t)varphi(t)dt} and \eqref{e:K+(t)}, we can formally write
\begin{align*}
	- m\int_\rbb v(t)\varphi'(t)\d t &= - \lambda \int_{\rbb} v(t) \f(t) \d t -\gamma \int_\rbb x(t)\f(t)\d t- \beta \int_\rbb v(t)\int_\rbb K^+(s)\varphi(t+s)\d s\d t \\
	& \qquad + \sqrt{\beta k_B T} \int_\rbb F(t)\varphi(t)\d t + \sqrt{2\lambda k_B T} \int_{\rbb} \f(t) \d W(t).
\end{align*}
By grouping together terms in $v$ and terms in $x$,
\begin{equation} \label{eq:integration-by-part:v-x}
\begin{aligned}
\int_\rbb v(t)\big(-m\varphi'(t)+\lambda\f(t)&+\beta (K^+*\phitil)(t)\big)\d t+\int_\rbb x(t)\big(\gamma\f(t)\big)\d t  \\
	&=  \sqrt{\beta k_BT} \int_\rbb F(t)\varphi(t)\d t + \sqrt{2\lambda k_BT} \int_{\rbb} \f(t) \d W(t),
\end{aligned}
\end{equation}
where $\phitil(t):=\f(-t)$.

So, let $L^2(\Omega)$ be the space of squared integrable, complex--valued random variables. Also let $\Phi=(X,V):\text{Dom}(\Phi)\subset\Sc'\to L^2(\Omega)^2$ (i.e., $d =2$) be a weakly stationary operator as in Definition~\ref{def:weak-operator}. In the formal relation \eqref{e:-int_x(t)varphi'(t)dt=int_v(t)varphi(t)dt}, we may interpret $X$ and $V$ as operators acting on test functions $\f\in\Sc$. In particular, the (Gaussian) operator $\Phi=(X,V)$ is fully characterized by its covariance structure, which we describe next.

First, note that \eqref{e:-int_x(t)varphi'(t)dt=int_v(t)varphi(t)dt} yields an intrinsic connection between the correlation structures of $X$ and $V$, namely,
\begin{align}\label{e:corr_X_vs_corr_V}
\E\Big[\la X,-\f_1'\ra\overline{\la X,-\f_2'\ra}\Big]&=\E\Big[\la V,\f_1\ra\overline{\la V,\f_2\ra}\Big], \quad \f_1,\,\f_2\in\Sc.
\end{align}
In regard to the cross--correlation between $X$ and $V$, again from the integral equation \eqref{e:-int_x(t)varphi'(t)dt=int_v(t)varphi(t)dt} we obtain
\begin{align}\label{e:corr_X_vs_cross-corr_XandV}
\E\Big[\la X,-\f_1'\ra\overline{\la X,-\f_2'\ra}\Big]&=\E\Big[\la X,-\f_1'\ra\overline{\la V,\f_2\ra}\Big].
\end{align}
Moreover, on the right-hand side of \eqref{eq:integration-by-part:v-x}, the functions $F:\Sc\to L^2(\Omega)$ and $\Wdot:\Sc\to L^2(\Omega)$ are understood as stationary random distributions in the sense of Definition~\ref{def:stationary-distribution}. Their autocorrelation functions are given by, respectively,
\begin{equation} \label{form:Wdot:covariance}
\E\Big[\langle \dot W,\varphi_1\rangle\overline{ \langle \dot W,\varphi_2\rangle}\Big]=\int_\rbb \varphi_1(t)\varphi_2(t)\d t=\frac{1}{2\pi}\int_\rbb\widehat{\f_1}(\omega)\overline{\widehat{\f_2}(\omega)}\d\omega, \\
\end{equation}
and
\begin{equation} \label{form:F:covariance}
\E \Big[\langle F,\varphi_1\rangle\overline{\langle F,\varphi_2\rangle}\Big]  = \int_\rbb K(t)\left( \varphi_1*\widetilde{\varphi}_2 \right)(t)\d t =\frac{1}{2\pi}\int_\rbb 2\Kcos(\omega)\widehat{\f_1}(\omega)\overline{\widehat{\f_2}(\omega)}\d\omega.
\end{equation}
In \eqref{form:F:covariance}, the last equality follows from the fact that $2\Kcos$ is the Fourier transform of $K$ in the sense of distributions (cf.\ Lemma~\ref{lem:temper-distribution:Kcos}). In other words, the spectral measure of $\dot W$ as in Theorem~\ref{thm:weak-stationary-distribution:characterization} is the Lebesgue measure, and that of $K$ is $\pi^{-1}\Kcos(\omega)\d\omega$. Define the operator
\begin{equation}\label{form:-m.phi'+lambda.phi+K^+*phitilde}
\Psi(\f)=-m\varphi'+\lambda\f+\beta (K^+*\phitil).
\end{equation}
Then, we can conveniently recast~\eqref{eq:integration-by-part:v-x} in the form
\begin{equation}\label{eq:integration-by-part:v-x_in_terms_of_operators}
\la V,\Psi(\f)\ra +\la X,\gamma\f\ra = \sqrt{k_BT} \, \la \sqrt{\beta}F+ \sqrt{2\lambda}\dot{W},\f\ra, \quad \f \in {\mathcal S}.
\end{equation}
In particular, relation \eqref{eq:integration-by-part:v-x_in_terms_of_operators} can be used in characterizing the covariance structure of the left-hand side of \eqref{eq:integration-by-part:v-x_in_terms_of_operators} in terms of the covariance structure of the noise terms $F$ and $\dot W$. In other words, for $\f_1,\f_2 \in {\mathcal S}$,
\begin{align}\label{e:LHS_2DGLE_second_moments}
\E\Big[ \big(\la V,\Psi(\f_1)\ra&+\la X,\gamma\f_1\ra\big) \overline{(\la V,\Psi(\f_2)\ra+\la X,\gamma\f_2\ra ) }\Big] \nonumber\\
&= k_BT\,\E\Big[ \la \sqrt{\beta}F+ \sqrt{2\lambda}\dot{W},\f_1\ra \overline{\la \sqrt{\beta}F+ \sqrt{2\lambda}\dot{W},\f_2\ra}\Big].
\end{align}

We now add the standard assumption that the two thermal forcing terms $F$ and $\Wdot$ are uncorrelated.
\begin{assumption}\label{cond:F-and-Wdot:independent}
Let $\Wdot$ and $F$ be the stationary random distributions as in Definition~\ref{def:stationary-distribution} whose covariance functions are given by~\eqref{form:Wdot:covariance} and \eqref{form:F:covariance}. $F$ and $\Wdot$ are uncorrelated, i.e., for all $\f_1,\,\f_2\in\Sc$,
\begin{align*}
\E[\la \Wdot,\f_1\ra\overline{\la F,\f_2\ra}]=0.
\end{align*}
\end{assumption}

In light of relations \eqref{e:corr_X_vs_corr_V}, \eqref{e:corr_X_vs_cross-corr_XandV}, \eqref{e:LHS_2DGLE_second_moments}, as well as of  Assumption~\ref{cond:F-and-Wdot:independent}, we are now in a position to define weak solutions for~\eqref{eq:GLE:2D}.
\begin{definition} \label{def:weak-solution}
Under Assumptions \ref{cond:K:general} and \ref{cond:F-and-Wdot:independent}, let $\Phi=(X,V):\emph{\dom}(\Phi)\subset\Sc'\to L^2(\Omega)^2$ be a stationary operator as in Definition~\ref{def:weak-operator}. Then $\Phi=(X,V)$ is called a weak stationary solution for equation~\eqref{eq:GLE:2D} if the following conditions are satisfied.
\begin{itemize}

\item[(a)] For all $\varphi\in\Sc$, $\E|\la V,\Psi(\f)\ra|^2<\infty$, where $\Psi(\f)$ is the transformation as in~\eqref{form:-m.phi'+lambda.phi+K^+*phitilde}.

\item[(b)] For any $\f_1,\f_2\in \Sc$,
\begin{align}\label{form:covariance:E[X,X]=E[V,V]=E[X,V]}
\E\Big[\la X,-\f_1'\ra\overline{\la X,-\f_2'\ra}\Big]&=\E\Big[\la V,\f_1\ra\overline{\la V,\f_2\ra}\Big]=\E\Big[\la X,-\f_1'\ra\overline{\la V,\f_2\ra}\Big],
\end{align}
\begin{equation} \label{form:covariance:E[V+X,phi]=E[F+W,phi]}
\begin{aligned}
\text{and}\qquad\E\Big[ \big(\la V,\Psi(\f_1)\ra&+\la X,\gamma\f_1\ra\big) \overline{\la V,\Psi(\f_2)\ra+\la X,\gamma\f_2\ra }\Big]\\&= k_BT\,\E\Big[ \la \sqrt{\beta}F+ \sqrt{2\lambda}\dot{W},\f_1\ra \overline{\la \sqrt{\beta}F+ \sqrt{2\lambda}\dot{W},\f_2\ra}\Big]\\
&=k_BT\Big(\E\Big[ \beta\la F,\f_1\ra \overline{\la F,\f_2\ra}\Big]+\E\Big[2\lambda \la \dot{W},\f_1\ra \overline{\la \dot{W},\f_2\ra}\Big]\Big).
\end{aligned}
\end{equation}
\end{itemize}
\end{definition}

In Section~\ref{sec:proofs}, we show that, for a weak stationary solution $\Phi=(X,V)$ of~\eqref{eq:GLE:2D}, its spectral densities can be computed explicitly, as pointed out in~\cite{kou2008stochastic}. In other words, let
\begin{align}
r_{11}(\omega)&=\frac{2\lambda+\beta\Khat(\omega)}{\big|\gamma-m\omega^2+\imag \omega(\lambda+\beta\widehat{K^+}(\omega))\big|^2},
 \label{form:spectral-density:x}\\
 r_{22}(\omega)&=\omega^2\,r_{11}(\omega),
 \label{form:spectral-density:v}\\
\text{and}\qquad r_{12}(\omega)&=\overline{r_{21}(\omega)}=\imag \omega\, r_{11}(\omega) \label{form:spectral-density:xv}.
\end{align}
Then, by Lemma~\ref{lem:r_(ij):L1}, there exists a unique stationary operator $\Phi$, cf.\ Definition~\ref{def:weak-operator}, associated with a $2\times 2$ Hermitian positive definite matrix of measures $\nu$ such that
$$
\nu(\d\omega)= k_BT(2\pi)^{-1}(r_{ij}(\omega)\d\omega)_{1\le i,j\le 2}
$$
for $r_{ij}$ as in \eqref{form:spectral-density:x}, \eqref{form:spectral-density:v} or \eqref{form:spectral-density:xv}.\\

For results on equipartition of energy, as mentioned in the Introduction, we consider kernels that are related to the so--named class of completely monotonic functions, denoted by $\CM$. We recall their definition next.
\begin{definition}\label{def:complete-monotone}
A function $K:(0,\infty)\to[0,\infty)$ is called completely monotonic if $K \in C^\infty(0,\infty)$ and $(-1)^n K^{(n)}(t)\geq 0$ for all $n\geq 0$, $t>0$.
\end{definition}

So, we make the following additional assumption on the memory kernels.
\begin{assumption} \label{cond:CM}
Let $K:\rbb\to \rbb \cup \{\infty\}$ be a real--valued function for $t \neq 0$ and which may be infinite at $t = 0$. We assume that either
\begin{itemize}
\item[(a)] $K\in \CM$; or

\item[(b)] $K(t)=\f(t^2)$, where $\f\in \CM$.
\end{itemize}

\end{assumption}
As briefly discussed in the Introduction, the former class includes exact power-law and sum-of-exponential kernels as in~\eqref{e:t^(-alpha)} and~\eqref{e:finite_sum_of_exp}, respectively, whereas the latter class includes Gaussian and Cauchy kernels, namely, $e^{-t^2}$ and $(t^2+1)^{-\alpha}$, respectively \cite{spiechowicz2018quantum,spiechowicz2019superstatistics,spiechowicz2021energy}.  Besides the broad scope of the $\CM$ class, dealing with completely monotonic functions involves the technically convenient fact that they can be represented as Laplace transforms of Radon measures on $[0,\infty)$ (cf.\ Theorem~\ref{thm:completely-monotone:Bernstein}). As a consequence, one is able to express the Fourier transforms of the memory kernels described in Assumption~\ref{cond:CM} based on the Radon measures (cf.\ Lemmas~\ref{lem:complete-monotone:Kcos-Ksin} and~\ref{lem:complete-monotone:Kcos-Ksin:K(t)=phi(t^2)}). For this reason, we are able to extend these transforms to the complex plane and calculate contour integrals involving the completely monotonic functions in question.

\section{Main results}\label{sec:main-results}

In this section, we state the main results of the paper. In Theorem \ref{thm:well-posed}, we establish the existence of weakly stationary solutions for~\eqref{eq:GLE:2D}. In Theorem \ref{thm:asymptotic-growth}, we characterize the mean squared displacement of $\int^{t}_{0}x(s)ds$ and $\int^{t}_{0}v(s)ds$ for weak solutions of ~\eqref{eq:GLE:2D}. Starting from the broad class of completely monotonic kernels, in Theorems \ref{thm:equipartition-of-energy:1D:gamma=0:power-law.exponential} and \ref{thm:equipartition-of-energy:2D:gamma>0:powerlaw.exponential}, respectively, we establish the equipartition relation in the GLE framework for free particles or particles under a harmonic potential.

We start off with the existence of solutions.
\begin{theorem}\label{thm:well-posed}
Under Assumptions~\ref{cond:K:general} and~\ref{cond:F-and-Wdot:independent}, then $\Phi=(X,V)$ is a weakly stationary solution of~\eqref{eq:GLE:2D} as in Definition~\ref{def:weak-solution} if and only if the spectral measure $\nu(\d\omega)= k_BT(2\pi)^{-1}(r_{ij}(\omega)\d\omega)_{1\le i,j\le 2}$ is given by relations~\eqref{form:spectral-density:x}, \eqref{form:spectral-density:v} and \eqref{form:spectral-density:xv}.
\end{theorem}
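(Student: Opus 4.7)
The proof rests on the one-to-one correspondence between a weakly stationary operator $\Phi=(X,V)$ and its $2\times 2$ Hermitian positive semidefinite spectral matrix measure $\nu=(\mu_{ij})_{1\le i,j\le 2}$ (cf.\ Theorem~\ref{thm:weak-stationary-distribution:characterization}), normalized so that $\E[\la \Phi_i,\f_1\ra\overline{\la \Phi_j,\f_2\ra}]=\int\widehat{\f_1}(\omega)\overline{\widehat{\f_2}(\omega)}\,\mu_{ij}(\d\omega)$, with the convention $\Phi_1=X$, $\Phi_2=V$. My plan is to convert both covariance identities in Definition~\ref{def:weak-solution} into measure equations and solve algebraically for the $\mu_{ij}$. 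Inserting $\widehat{-\f'}(\omega)=-\imag\omega\widehat{\f}(\omega)$ into~\eqref{form:covariance:E[X,X]=E[V,V]=E[X,V]} and using uniqueness of the representing measure, I first obtain
\[
\mu_{22}(\d\omega)=\omega^2\mu_{11}(\d\omega)\qquad\text{and}\qquad \mu_{12}(\d\omega)=\imag\omega\,\mu_{11}(\d\omega),
\]
so that $\mu_{21}=\overline{\mu_{12}}=-\imag\omega\,\mu_{11}$. In particular $r_{22}=\omega^2 r_{11}$ and $r_{12}=\imag\omega\,r_{11}$ once the diagonal density $r_{11}$ is pinned down.

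To identify $r_{11}$, I turn to~\eqref{form:covariance:E[V+X,phi]=E[F+W,phi]}. A direct Fourier calculation shows that $\Psi$ is a Fourier multiplier,
\[
\widehat{\Psi(\f)}(\omega)=P(\omega)\widehat{\f}(\omega),\qquad P(\omega):=\lambda-\imag m\omega+\beta\,\overline{\widehat{K^+}(\omega)},
\]
since $\int K^+(s)\f(t+s)\d s$ has Fourier transform $\overline{\widehat{K^+}(\omega)}\widehat{\f}(\omega)$ by Fubini and a change of variables. Substituting this together with the identities from the previous paragraph into the LHS of~\eqref{form:covariance:E[V+X,phi]=E[F+W,phi]} and expanding the four bilinear terms yields
\[
\int\bigl[\omega^2|P(\omega)|^2+2\gamma\omega\,\mathrm{Im}\,P(\omega)+\gamma^2\bigr]\widehat{\f_1}(\omega)\overline{\widehat{\f_2}(\omega)}\,\mu_{11}(\d\omega).
\]
Writing $\widehat{K^+}=\Kcos-\imag\Ksin$, a short expansion shows that the bracket collapses to $|\gamma-m\omega^2+\imag\omega(\lambda+\beta\widehat{K^+}(\omega))|^2$, the denominator in~\eqref{form:spectral-density:x}.

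Meanwhile, by~\eqref{form:Wdot:covariance},~\eqref{form:F:covariance} and Assumption~\ref{cond:F-and-Wdot:independent}, the RHS of~\eqref{form:covariance:E[V+X,phi]=E[F+W,phi]} evaluates to
\[
\frac{k_BT}{2\pi}\int\bigl(2\lambda+\beta\widehat{K}(\omega)\bigr)\widehat{\f_1}(\omega)\overline{\widehat{\f_2}(\omega)}\,\d\omega.
\]
Equating LHS and RHS for arbitrary $\f_1,\f_2\in\Sc$, and using that products $\widehat{\f_1}\overline{\widehat{\f_2}}$ separate complex Radon measures (via the Fourier--Schwartz isomorphism), forces $\mu_{11}(\d\omega)=(k_BT/(2\pi))\,r_{11}(\omega)\,\d\omega$. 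Together with the first paragraph this yields $\nu$ in the stated form. The converse implication is then obtained by running this chain of equivalences backward: Lemma~\ref{lem:r_(ij):L1} guarantees that the $r_{ij}$ are locally integrable, so the matrix $\nu$ genuinely defines a weakly stationary operator $\Phi$, and the computations above immediately show that $\Phi$ satisfies the two covariance relations in Definition~\ref{def:weak-solution}.

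The main technical step is the algebraic identity
\[
\omega^2|P(\omega)|^2+2\gamma\omega\,\mathrm{Im}\,P(\omega)+\gamma^2=\bigl|\gamma-m\omega^2+\imag\omega(\lambda+\beta\widehat{K^+}(\omega))\bigr|^2,
\]
which must be verified by carefully tracking the cross terms in $(X,V)$: the sign carried by $\mu_{21}=-\imag\omega\,\mu_{11}$ has to combine with $|P|^2$ to reproduce the modulus-squared transfer function obtained from a formal Fourier analysis of the GLE. A secondary subtlety is the rigorous passage from equality of integrals against every $\widehat{\f_1}\overline{\widehat{\f_2}}$ to pointwise (a.e.) equality of the densities $r_{ij}$, which relies on the density of the Schwartz class in suitable spaces of continuous functions.
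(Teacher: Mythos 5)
Your proposal is correct and follows essentially the same route as the paper's proof: both translate the covariance identities in Definition~\ref{def:weak-solution} into spectral-measure equations via Theorem~\ref{thm:weak-stationary-distribution:characterization}, obtain $r_{22}=\omega^2 r_{11}$, $r_{12}=\imag\omega r_{11}$ from \eqref{form:covariance:E[X,X]=E[V,V]=E[X,V]}, compute the Fourier multiplier of $\Psi$ (your $P(\omega)$ is exactly the paper's $\overline{\imag m\omega+\lambda+\beta\widehat{K^+}}$), expand the four bilinear terms against the noise covariance to pin down $r_{11}$, and then reverse the chain using Lemma~\ref{lem:r_(ij):L1} for the converse. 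The algebraic identity $\omega^2|P|^2+2\gamma\omega\,\mathrm{Im}\,P+\gamma^2=|\gamma-m\omega^2+\imag\omega(\lambda+\beta\widehat{K^+})|^2$ that you isolate as the main technical step is precisely the "simple calculation" the paper leaves implicit.
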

\noindent The proof of Theorem~\ref{thm:well-posed} is discussed in detail in Section~\ref{sec:wellposed}.

\begin{remark}\label{rem:weak-solution:gamma=0} When $\gamma=0$, equation~\eqref{eq:GLE:2D} is reduced to~\eqref{eq:GLE:1D}, whose weak solution $V:\emph{Dom}(V)\subset \Sc'\to L^2(\Omega)$ is defined as satisfying a relation similar to~\eqref{form:covariance:E[V+X,phi]=E[F+W,phi]}, namely,
\begin{align*}
\E\Big[ \la V,\Psi(\f_1)\ra\overline{\la V,\Psi(\f_2)\ra }\Big]=k_BT\Big(\E\Big[ \beta\la F,\f_1\ra \overline{\la F,\f_2\ra}\Big]+\E\Big[2\lambda \la \dot{W},\f_1\ra \overline{\la \dot{W},\f_2\ra}\Big]\Big).
\end{align*}
The existence of such $V$ for equation~\eqref{eq:GLE:1D} was previously studied in \cite{didier2020asymptotic,mckinley2018anomalous}. In particular, the spectral measure of $V$ is also given by $r_{22}$ as in~\eqref{form:spectral-density:v} with $\gamma=0$.
\end{remark}

Next, we turn to the topic of characterizing of the anomalously diffusive behavior of solutions to~\eqref{eq:GLE:1D}. For this purpose, we consider the integrated bivariate process resulting from the solutions encountered in Theorem \ref{thm:well-posed}. More precisely, in view of Lemma~\ref{lem:r_(ij):L1} (see Section \ref{lem:r_(ij):L1}) together with Remark~\ref{rem:weak-oprator:point-process}, since $r_{11}$ and $r_{22}$ are both integrable, we can define the bivariate process $(x(t),v(t))$ associated with the weak stationary solution $(X,V)$ as in Definition~\ref{def:form:u(t)}. Namely, we set
\begin{equation} \label{form:x(t)=<X,delta_t>}
x(t):=\la X,\delta_t\ra\quad\text{and}\quad v(t):=\la V,\delta_t\ra,
\end{equation}
where $\delta_t$ is the Dirac $\delta$ distribution centered at $t$. Moreover, it can be shown that $(x(t),v(t))$ is a $\bbR^2$--valued process and has a continuous modification (see Lemma~\ref{lem:x(t)-v(t):continuous}). It follows that we may define the integrals $\int_0^t x(s)\d s$ and $\int_0^t v(s)\d s$ in the usual Riemann-Lebesgue sense. Note that these integrals do agree with $\la X,1_{[0,t]}\ra$ and $\la V,1_{[0,t]}\ra$, respectively (see Remark~\ref{rem:int_0^t.x(s)ds=<X,1_[0,t]>}).

As explained in the Introduction, for the case of a free particle as in~\eqref{eq:GLE:1D} ($\gamma=0$), for a large class of memory kernels the process $v(t)$ may either be diffusive or subdiffusive \cite{didier2020asymptotic,mckinley2018anomalous} depending on the asymptotic decay of $K(t)$ as $t\to\infty$. In contrast, the process $x(t)$ defined in~\eqref{form:x(t)=<X,delta_t>} may be either diffusive or \textit{super}diffusive. This is all precisely stated in the following theorem.
\begin{theorem}\label{thm:asymptotic-growth}
Let $(x(t),v(t))$ be the bivariate process associated with $(X,V)$ the weak stationary solution of~\eqref{eq:GLE:2D} as in Theorem~\ref{thm:well-posed}. Then, under Assumptions~\ref{cond:K:general} and~\ref{cond:F-and-Wdot:independent}, the following holds.
\begin{itemize}
\item[(a)] For all $t\in\rbb$, $\E\big[\int_0^t x(s)\d s\int_0^tv(s)\d s\big]=0$.

\item[(b)] As $t\to\infty$, $\E\big|\int_0^t v(s)\d s\big|^2\to2\,\E|x(0)|^2$.

\item[(c)] $
\text{If } K(t)\begin{cases}\in L^1(\rbb),\\
\sim t^{-1},\quad t\to\infty,\\
\sim t^{-\alpha},\,\alpha\in(0,1),\quad t\to\infty,\end{cases}\hspace{-0.3cm}\text{then  }\E\big|\int_0^t x(s)\d s\big|^2\sim\begin{cases} t,\\ t\,\log (t),\\ t^{2-\alpha} \end{cases}\hspace{-0.3cm}\text{as  } t\to\infty.
 $
\end{itemize}
\end{theorem}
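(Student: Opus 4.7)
The plan is to reduce all three assertions to spectral integrals using Theorem~\ref{thm:well-posed} and then exploit the explicit symmetry and low-frequency structure of $r_{11}$, $r_{22}$ and $r_{12}$. Since $1_{[0,t]}\in L^2(\nu_{ii})$ for $i=1,2$ by Lemma~\ref{lem:r_(ij):L1}, the Riemann integrals agree with the distributional pairings $\la X,1_{[0,t]}\ra$ and $\la V,1_{[0,t]}\ra$ (Remark~\ref{rem:int_0^t.x(s)ds=<X,1_[0,t]>}), so that
\[
\E\Big[\int_0^t x(s)\d s\int_0^t v(s)\d s\Big]=\frac{k_BT}{2\pi}\int_\rbb \imag\omega\,r_{11}(\omega)\,\frac{4\sin^2(\omega t/2)}{\omega^2}\d\omega,
\]
with analogous spectral representations for $\E|\int_0^t v(s)\d s|^2$ and $\E|\int_0^t x(s)\d s|^2$ in which $\imag\omega\,r_{11}$ is replaced by $r_{22}$ and $r_{11}$ respectively.

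Part (a) is then immediate: $r_{11}$ is real and even and $\sin^2(\omega t/2)/\omega^2$ is even, so the integrand above is purely imaginary and odd, and the integral vanishes identically in $t$. For part (b), the factor $\omega^2$ in $r_{22}(\omega)=\omega^2 r_{11}(\omega)$ cancels the $\omega^2$ in the denominator of $|\widehat{1_{[0,t]}}|^2$, yielding
\[
\E\Big|\int_0^t v(s)\d s\Big|^2=\frac{k_BT}{\pi}\int_\rbb r_{11}(\omega)\d\omega-\frac{k_BT}{\pi}\int_\rbb r_{11}(\omega)\cos(\omega t)\d\omega.
\]
The first term equals $2\,\E|x(0)|^2$ via the spectral identity $\E|x(0)|^2=(k_BT)(2\pi)^{-1}\int r_{11}(\omega)\d\omega$, and the second vanishes as $t\to\infty$ by the Riemann--Lebesgue lemma since $r_{11}\in L^1(\rbb)$.

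For part (c), the growth rate of $\E|\int_0^t x(s)\d s|^2$ is dictated by the behavior of $r_{11}(\omega)$ as $\omega\to 0$, hence by $\Kcos(\omega)$ and $\widehat{K^+}(\omega)=\Kcos(\omega)-\imag\Ksin(\omega)$. Using the Appendix Fourier-transform lemmas, the three regimes of Assumption~\ref{cond:K:general} yield: (i) if $K\in L^1$, then $r_{11}(0)=(2\lambda+\beta\widehat K(0))/\gamma^2$ is finite and positive; (ii) if $K(t)\sim t^{-\alpha}$ with $\alpha\in(0,1)$, then $\Kcos(\omega)\sim c_\alpha|\omega|^{\alpha-1}$, the denominator of $r_{11}$ converges to $\gamma^2$, and $r_{11}(\omega)\sim(2\beta c_\alpha/\gamma^2)|\omega|^{\alpha-1}$; (iii) if $K(t)\sim t^{-1}$, then $\Kcos(\omega)\sim\log(1/|\omega|)$ while $\omega\widehat{K^+}(\omega)\to0$, so the denominator is still $\sim\gamma^2$ and $r_{11}(\omega)\sim(2\beta/\gamma^2)\log(1/|\omega|)$. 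Rescaling $u=\omega t$ and applying dominated convergence (justified by the tail decay $r_{11}(\omega)\sim 2\lambda/(m^2\omega^4)$ together with $\int_\rbb 4\sin^2(u/2)/u^2\,\d u=2\pi$), these three local profiles produce the stated global rates $t$, $t^{2-\alpha}$ and $t\log t$, respectively.

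The main obstacle is the borderline case $K(t)\sim t^{-1}$, where the numerator $2\lambda+\beta\widehat K(\omega)$ diverges logarithmically at $\omega=0$ while the $\omega\log(1/|\omega|)$ contributions to the denominator must be controlled with care. I would handle this by splitting the spectral integral at a small $\delta>0$. Outside $[-\delta,\delta]$, the integrand is dominated by $r_{11}\cdot(t^2\wedge 4/\omega^2)$, contributing only $O(t)$. Inside $[-\delta,\delta]$, the leading-order expansion of $r_{11}$, the change of variables $u=\omega t$, and the explicit computation
\[
\int_\rbb\bigl(\log t-\log|u|\bigr)\,\frac{4\sin^2(u/2)}{u^2}\,\d u=2\pi\log t-C
\]
isolate precisely the $t\log t$ growth. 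Abelian-type arguments in the spirit of \cite{soni1975slowly,soni1975slowlyII,didier2020asymptotic} then deliver the matching constants.
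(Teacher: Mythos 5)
Your proposal is correct and follows essentially the same route as the paper: the spectral representation from Theorem~\ref{thm:well-posed} gives (a) by oddness of $\imag\omega\,r_{11}(\omega)$, (b) by the Riemann--Lebesgue lemma applied to the integrable $r_{11}$, and (c) by the rescaling $u=\omega t$ combined with the near-zero asymptotics of $r_{11}$ from Lemma~\ref{lem:Kcos-Ksin:omega->0} and dominated convergence. The only difference is cosmetic bookkeeping in the borderline case $K(t)\sim t^{-1}$, where the paper splits the rescaled integral into the ranges $(0,e^{-2})$, $(e^{-2},t/2)$, $(t/2,\infty)$ and dominates $\log(t/u)/\log t$ on each, while you split at a fixed $\delta$ and expand $\log(t/u)=\log t-\log|u|$; both deliver the $t\log t$ rate.
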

The claim in Theorem~\ref{thm:asymptotic-growth}, (a), is not surprising in view of the fact that, for several other GLE instances in stationarity, $x(t)$ is uncorrelated with $v(t)$ \cite{glatt2020generalized,pavliotis2014stochastic}. Also, the appearance of $x(0)$ in Theorem~\ref{thm:asymptotic-growth}, (b), may be intuitively explained based on the observation that $v(t)$ can be regarded as the derivative of $x(t)$. Thus, formally,
$$
\E \Big|\int_0^t v(s)\d s\Big|^2= \E |x(t)-x(0)|^2 \sim 2\E|x(0)|^2, \quad t \rightarrow \infty,
$$
where the asymptotic equivalence is a consequence of the fact that $x(t)$ is Gaussian, (weakly) stationary and mixing \cite{rosinski:zak:1996}. However, note that establishing the asymptotic growth of $\E|\int_0^t x(s)\d s|^2$ requires a careful characterization of the spectral density $r_{11}$ in terms of the asymptotics of $K(t)$. The proof of Theorem~\ref{thm:asymptotic-growth} can be found in Section~\ref{sec:asymptotic.growth}.

We now turn to equipartition of energy. First, we discuss the case $\gamma=0$, namely, a free particle as defined by equation~\eqref{eq:GLE:1D}. In what follows, we state the result for~\eqref{eq:GLE:1D} under kernels $K$ either in the class $\CM$ or such that $K(t) = \varphi(t^2)$, $\varphi\in\CM$. As discussed in the Introduction, this generalizes the results in \cite[Formula (2.7)]{hohenegger2017equipartition} and \cite[Theorem 4.1]{kou2008stochastic}.
\begin{theorem} \label{thm:equipartition-of-energy:1D:gamma=0:power-law.exponential}
Suppose that $\gamma=0$ and that Assumptions~\ref{cond:K:general},~\ref{cond:F-and-Wdot:independent} and~\ref{cond:CM} are satisfied. Let $v(t)$ be the process associated with $V$, a weak solution for~\eqref{eq:GLE:1D} in the sense of Remark~\ref{rem:weak-solution:gamma=0}. Then,
\begin{align}\label{e:E[m,v(0)^2]=k_BT}
\E[m \, v(0)^2] = k_BT.
\end{align}
\end{theorem}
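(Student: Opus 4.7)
The plan is to evaluate $\E[v(0)^2]$ via the spectral representation of $V$ and reduce the problem to a contour integral in the complex plane. Since $v(0)=\la V,\delta_0\ra$, Theorem~\ref{thm:well-posed} (together with Remark~\ref{rem:weak-solution:gamma=0}) gives $\E[v(0)^2]=\frac{k_BT}{2\pi}\int_\rbb r_{22}(\omega)\,d\omega$, so the goal becomes showing $\int_\rbb r_{22}(\omega)\,d\omega=2\pi/m$. Setting $\gamma=0$ in \eqref{form:spectral-density:v} and using $\widehat K(\omega)=2\Kcos(\omega)$ together with $\widehat{K^+}(\omega)=\Kcos(\omega)-\imag\Ksin(\omega)=\widetilde K(\imag\omega)$, where $\widetilde K(s):=\int_0^\infty K(t)e^{-st}\,dt$, a direct simplification of the $\omega^2$ factors yields
$$r_{22}(\omega)\;=\;\frac{2(\lambda+\beta\Kcos(\omega))}{(\lambda+\beta\Kcos(\omega))^2+(m\omega-\beta\Ksin(\omega))^2}\;=\;\frac{1}{L(\imag\omega)}+\frac{1}{L(-\imag\omega)},$$
where $L(s):=\lambda+ms+\beta\widetilde K(s)$ and $L(-\imag\omega)=\overline{L(\imag\omega)}$ because $K$ is even. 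A change of variable $\omega\mapsto-\omega$ then collapses the two summands to give
$$\int_\rbb r_{22}(\omega)\,d\omega\;=\;2\int_\rbb\frac{d\omega}{L(\imag\omega)}\;=\;-2\imag\int_{-\imag\infty}^{\imag\infty}\frac{ds}{L(s)}.$$

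The second step is to evaluate this contour integral by Cauchy's theorem, closing along the semicircle $s=Re^{\imag\theta}$, $\theta\in[-\pi/2,\pi/2]$, in the right half--plane $\{\Re s>0\}$. Under Assumption~\ref{cond:CM}, Bernstein's theorem (Theorem~\ref{thm:completely-monotone:Bernstein}) furnishes a positive Radon measure $\mu$ so that either $K(t)=\int_0^\infty e^{-at}\mu(da)$ (case (a)), whence $\widetilde K(s)=\int_0^\infty(s+a)^{-1}\mu(da)$, or $K(t)=\int_0^\infty e^{-at^2}\mu(da)$ (case (b)), whence $\widetilde K(s)=\int_0^\infty\frac{\sqrt\pi}{2\sqrt a}\,e^{s^2/(4a)}\erfc\!\big(\tfrac{s}{2\sqrt a}\big)\,\mu(da)$; in both cases $\widetilde K$ extends analytically to $\{\Re s>0\}$ and $\widetilde K(s)\to 0$ as $|s|\to\infty$ in that region (cf.\ Lemmas~\ref{lem:complete-monotone:Kcos-Ksin} and~\ref{lem:complete-monotone:Kcos-Ksin:K(t)=phi(t^2)}). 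Consequently $L(s)\sim ms$ on the arc, and the explicit computation $\int_{\pi/2}^{-\pi/2}\imag Re^{\imag\theta}/(mRe^{\imag\theta})\,d\theta=-\imag\pi/m$ yields the arc contribution in the limit $R\to\infty$, with lower-order corrections of order $O(1/R)$ absorbed using the decay of $\widetilde K$.

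The final step, and the expected principal obstacle, is to verify that $L(s)$ has no zeros in the closed right half--plane, so that Cauchy's theorem gives zero for the closed contour and the arc fully determines the imaginary-axis integral. In case (a) this is immediate: $\Re\widetilde K(s)=\int_0^\infty(\Re s+a)/|s+a|^2\,\mu(da)\geq 0$, hence $\Re L(s)\geq\lambda>0$. In case (b) the analogous bound reduces to the classical positivity $\Re(e^{z^2}\erfc(z))\geq 0$ on $\{\Re z\geq 0\}$, which I would derive from the Poisson-type representation
$$e^{z^2}\erfc(z)\;=\;\frac{1}{\pi}\int_\rbb\frac{e^{-t^2}}{z-\imag t}\,dt,\qquad \Re z>0,$$
whose real part is a nonnegative Gaussian-weighted Poisson integral of the right half--plane (with boundary values $e^{-y^2}>0$ at $z=\imag y$ recovered by continuity). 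Granting $\Re\widetilde K(s)\geq0$, we again obtain $\Re L(s)\geq\lambda>0$ throughout $\{\Re s\geq 0\}$, Cauchy's theorem gives $\int_{-\imag\infty}^{\imag\infty}ds/L(s)=\imag\pi/m$, and chaining the identities yields $\int_\rbb r_{22}(\omega)\,d\omega=2\pi/m$, hence $\E[m\,v(0)^2]=k_BT$. A secondary technical point is uniformity of the arc estimates, since the Bernstein measure $\mu$ may be infinite for pure power-law kernels; this is handled by a splitting of $\mu$ into $\{a\le 1\}$ and $\{a>1\}$ together with the Assumption~\ref{cond:K:general} tail control on $K$.
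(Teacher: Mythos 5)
Your proposal is correct and follows essentially the same route as the paper: after the change of variable $s=-\imag z$ your $L(s)$ is exactly the paper's $q_1$ (resp.\ $\qtil_1$), your positivity arguments $\Re\widetilde K(s)\ge 0$ via the Bernstein measure and via the Hilbert--transform representation of $e^{z^2}\erfc(z)$ are precisely the paper's Lemmas on $q_1$ and $\qtil_1$ having no roots, and the large--arc computation matches the paper's $I_4(R)\to-\pi m^{-1}$. The one step you gloss over is that $L$ need not be analytic at $s=0$ (e.g.\ a branch point for power--law kernels), so the contour must be indented there as in the paper's $C^-_{1/R}$; this is harmless in your setup since your bound $\Re L(s)\ge\lambda>0$ gives $|1/L|\le 1/\lambda$ uniformly, making the indentation's contribution $O(1/R)$.
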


The proof of Theorem~\ref{thm:equipartition-of-energy:1D:gamma=0:power-law.exponential} can be found in Section~\ref{sec:equipartition:1D:gamma=0}.

In the following theorem, we describe the analogous result for the case $\gamma > 0$, namely, a harmonically bounded particle as defined by~\eqref{eq:GLE:2D}. Its proof is presented in Section~\ref{sec:equipartition:2D:gamma>0}.

\begin{theorem} \label{thm:equipartition-of-energy:2D:gamma>0:powerlaw.exponential}
Suppose that $\gamma>0$ and that Assumptions~\ref{cond:K:general},~\ref{cond:F-and-Wdot:independent} and~\ref{cond:CM} are satisfied. Let $(x(t),v(t))$ be the process associated with $\Phi=(X,V)$, a weak solution for~\eqref{eq:GLE:2D} as in~\eqref{form:x(t)=<X,delta_t>}. Then,
\begin{align*}
\E[\gamma\, x(0)^2] =\E[m\, v(0)^2] = k_BT.
\end{align*}
\end{theorem}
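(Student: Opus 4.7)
The plan is to reduce the theorem to two integral identities for the spectral densities, and then evaluate them by contour integration using the analytic continuation furnished by the Bernstein representation of the memory kernel. By Theorem~\ref{thm:well-posed}, the spectral representation of a weakly stationary operator (Appendix), and the definition $(x(0),v(0))=(\la X,\delta_0\ra,\la V,\delta_0\ra)$ from~\eqref{form:x(t)=<X,delta_t>}, one has
\begin{equation*}
\E[x(0)^2]=\frac{k_BT}{2\pi}\int_\rbb r_{11}(\omega)\,\d\omega, \qquad \E[v(0)^2]=\frac{k_BT}{2\pi}\int_\rbb r_{22}(\omega)\,\d\omega,
\end{equation*}
so the claim reduces to $\int_\rbb r_{11}=2\pi/\gamma$ and $\int_\rbb r_{22}=2\pi/m$. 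Writing $D(\omega):=\gamma-m\omega^2+\imag\omega(\lambda+\beta\widehat{K^+}(\omega))$ and using $\Khat=2\Kcos$ (so that $\mathrm{Im}\,D(\omega)=\omega(\lambda+\beta\Kcos(\omega))$), inspection yields the identities
\begin{equation*}
r_{11}(\omega)=\frac{1}{\imag\omega}\bigg(\frac{1}{\overline{D(\omega)}}-\frac{1}{D(\omega)}\bigg), \qquad r_{22}(\omega)=\omega^2 r_{11}(\omega),
\end{equation*}
which cast both integrands into a form ready for complex contour methods.

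Under Assumption~\ref{cond:CM}, I would next use Theorem~\ref{thm:completely-monotone:Bernstein} together with Lemmas~\ref{lem:complete-monotone:Kcos-Ksin}--\ref{lem:complete-monotone:Kcos-Ksin:K(t)=phi(t^2)} to represent $\widehat{K^+}$ as an integral against a positive Radon measure; in case~(a) this takes the form $\widehat{K^+}(z)=\int_0^\infty (s+\imag z)^{-1}\mu(\d s)$, with an analogous formula in case~(b). Either representation extends $\widehat{K^+}$, hence $D$, to an analytic function $D^-$ on the open lower half-plane $\{\mathrm{Im}(z)<0\}$ continuous up to the real axis. The plan is to evaluate the principal-value integrals
\begin{equation*}
M:=\mathrm{PV}\int_\rbb \frac{\omega}{D(\omega)}\,\d\omega, \qquad L:=\mathrm{PV}\int_\rbb\frac{1}{\omega D(\omega)}\,\d\omega
\end{equation*}
by closing the contour in the lower half-plane, then to exploit the symmetry $\overline{D(\omega)}=D(-\omega)$---which gives $\overline{M}=-M$ and $\overline{L}=-L$---to conclude $\int_\rbb r_{22}=-2M/\imag$ and $\int_\rbb r_{11}=-2L/\imag$.

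The main obstacle will be the stability step: to show that $D^-(z)\neq 0$ for all $z$ in the open lower half-plane. Using the Bernstein formula and the identity $\imag z/(s+\imag z)=1-s/(s+\imag z)$, I expect a direct calculation for $z=a+\imag b$ with $b<0$ to yield
\begin{equation*}
\mathrm{Im}\,D^-(z)=a\,\Big(\lambda-2mb+\beta\int_0^\infty\tfrac{s}{(s-b)^2+a^2}\mu(\d s)\Big),
\end{equation*}
which is strictly of the same sign as $a$ since the bracketed expression is positive when $b<0$. Thus $\mathrm{Im}\,D^-(z)$ vanishes only on the imaginary axis; there, $D^-(\imag b)=\gamma+mb^2-b\lambda-b\beta\widehat{K^+}(\imag b)>0$ because each summand is non-negative when $b<0$. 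This is the step most sensitive to the complete-monotonicity assumption, and for case~(b) of Assumption~\ref{cond:CM} it will need to be reworked using the representation from Lemma~\ref{lem:complete-monotone:Kcos-Ksin:K(t)=phi(t^2)}.

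With stability in hand, the contour evaluations are routine. For $M$: no indentation at $0$ is required (since $D(0)=\gamma\neq 0$); the large lower semicircular integral contributes $\imag\pi/m$ because $D^-(z)\sim -mz^2$ at infinity (the dominance of $mz^2$ over $z\widehat{K^+}(z)$ following from the decay of $\widehat{K^+}(z)$ as $|z|\to\infty$, by dominated convergence in the Bernstein representation). Absence of enclosed poles then forces $M=-\imag\pi/m$, so $\int_\rbb r_{22}=2\pi/m$. For $L$: a small upper-semicircular indentation of radius $\varepsilon$ around $0$ enfolds the simple pole of $1/(zD^-(z))$ (residue $1/\gamma$) inside the clockwise closed contour; the indentation contributes $-\imag\pi/\gamma$, the large semicircle contributes $0$ (the integrand being $O(|z|^{-3})$), and the residue theorem gives $L-\imag\pi/\gamma=-2\pi\imag/\gamma$, so $L=-\imag\pi/\gamma$ and $\int_\rbb r_{11}=2\pi/\gamma$. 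Multiplying through delivers $\E[\gamma x(0)^2]=\E[m v(0)^2]=k_BT$.
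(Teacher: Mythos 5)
Your proposal is structurally the same strategy the paper uses: express the spectral densities through a characteristic function built from the Bernstein representation of $K$, show that this function has no zeros in a half-plane, and then evaluate the spectral integrals by closing a contour in that half-plane. The algebra you set up is correct --- the identity $r_{11}(\omega)=\tfrac{1}{\imag\omega}\bigl(\overline{D(\omega)}^{-1}-D(\omega)^{-1}\bigr)$ checks out, the symmetry $\overline{D(\omega)}=D(-\omega)$ gives $\overline{M}=-M$ and $\overline{L}=-L$, and your imaginary--part calculation $\mathrm{Im}\,D^-(z)=a\bigl(\lambda-2mb+\beta\int_0^\infty\tfrac{s}{(s-b)^2+a^2}\mu(\d s)\bigr)$ for $b<0$ is correct in case~(a) of Assumption~\ref{cond:CM}; the large- and small-circle evaluations and the resulting values $M=-\imag\pi/m$, $L=-\imag\pi/\gamma$ are right. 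The only essential difference from the paper is cosmetic: the paper closes contours in the \emph{upper} half-plane with the auxiliary functions $f_2(z)$ and $z^2 f_2(z)$ (factoring the argument through Lemma~\ref{lem:equipartition-of-energy:2D:gamma>0} and the ``no root'' Lemmas~\ref{lem:q_2:gamma>0}, \ref{lem:qtilde_2:gamma>0}), working with $q_2(\imag z)$ on $\cbb^*\setminus\{0\}$, whereas you work directly with $D^-$ on the lower half-plane and split the integrable $r_{11},r_{22}$ into two principal-value pieces. These are rotations of the same idea.

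There is, however, a genuine gap that you yourself flag but do not fill. Your no-zeros argument for $D^-$ is written out only for case~(a) of Assumption~\ref{cond:CM}, i.e.\ $K\in\CM$, where the Bernstein representation directly gives $\widehat{K^+}(z)=\int_0^\infty(x+\imag z)^{-1}\mu(\d x)$. For case~(b), $K(t)=\varphi(t^2)$ with $\varphi\in\CM$, the analytic continuation of $\Kcos\mp\imag\Ksin$ is given in the paper by Lemma~\ref{lem:complete-monotone:Kcos-Ksin:K(t)=phi(t^2)} through the Faddeeva function $w(z)=e^{-z^2}\erfc(-\imag z)$, and establishing that the corresponding $D^-$ (equivalently $\qtil_2$) has no zeros in the half-plane is not a routine repeat of the partial-fractions manipulation. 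The paper handles this in Lemma~\ref{lem:qtilde_2:gamma>0} by writing $\qtil_2(z)=z(\qtil_1(-z)-\imag\gamma/(-z))$ and proving $\Re(\qtil_1(z)-\imag\gamma/z)>0$ via the Hilbert-transform representation~\eqref{e:w(z)_Hilbert} of $w$ for $\Im(z)>0$ and the Dawson-integral form~\eqref{e:w(x)} on the real line, together with the estimate $|w(z)|\le C/|z|$ of Lemma~\ref{lem:w(z)} for the decay control at infinity (your ``dominance of $mz^2$'' step). This piece is a substantial portion of the paper's proof and would need to be supplied for your argument to cover the full scope of Assumption~\ref{cond:CM}.

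A minor point: for the closed contour to apply you also need $D(\omega)\neq0$ on the real axis; your sign formula for $\mathrm{Im}\,D^-(z)$ is stated only for $b<0$. This is easy to fix (for $\omega\in\rbb\setminus\{0\}$, $\mathrm{Im}\,D(\omega)=\omega(\lambda+\beta\Kcos(\omega))\neq0$ by Assumption~\ref{cond:K:general}~(I)(c), and $D(0)=\gamma>0$) but should be stated, since the paper's setup establishes this through the positivity of $\Kcos$.
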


\section{Proofs of the main results}\label{sec:proofs}
Throughout the rest of the paper, $c$ denotes a generic positive constant. The main parameters that $c$ depends on will appear between parenthesis, e.g., $c(T,q)$ is a function of $T$ and $q$.
\subsection{Wellposedness} \label{sec:wellposed}
In this section, we provide the proof of Theorem~\ref{thm:well-posed} giving the existence of weak solution for~\eqref{eq:GLE:2D}. We start with the following result, which asserts that $\{r_{ij}\}$ is a spectral density of a weak stationary operator $\Phi$.
\begin{lemma}\label{lem:r_(ij):L1}
Let $\nu(\d\omega)=k_BT(2\pi)^{-1}(r_{ij}(\omega)\d\omega)_{1\le i,j\le 2}$ where $r_{ij}$ is as in~\eqref{form:spectral-density:x}, \eqref{form:spectral-density:v} or \eqref{form:spectral-density:xv}. Then, $\nu$ is the spectral measure of a stationary operator as in Definition~\ref{def:weak-operator}.
\end{lemma}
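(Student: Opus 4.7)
The plan is to invoke Theorem~\ref{thm:weak-stationary-distribution:characterization}, which guarantees that any Hermitian positive semidefinite $2\times 2$ matrix of tempered Radon measures on $\rbb$ arises as the spectral measure of some weakly stationary operator. It therefore suffices to verify two properties of the candidate density matrix $(r_{ij}(\omega))_{1\le i,j\le 2}$: pointwise Hermitian positive semidefiniteness, and the tempered-measure condition.

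For positive semidefiniteness, the shape $r_{12}(\omega)=\imag\omega\, r_{11}(\omega)$, $r_{21}(\omega)=\overline{r_{12}(\omega)}$ and $r_{22}(\omega)=\omega^2 r_{11}(\omega)$ yields
\[
r_{11}(\omega)\, r_{22}(\omega)-|r_{12}(\omega)|^2 = \omega^2\, r_{11}(\omega)^2 - \omega^2\, r_{11}(\omega)^2 = 0,
\]
so the matrix is rank at most one at each $\omega$. Assumption~\ref{cond:K:general}(I)(c), together with the identity $\widehat{K}(\omega)=2\Kcos(\omega)$ coming from the symmetry of $K$, makes the numerator $2\lambda+\beta\widehat K(\omega)=2\lambda+2\beta\Kcos(\omega)$ nonnegative, while the denominator is a modulus squared; hence $r_{11}(\omega),r_{22}(\omega)\ge 0$. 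A Hermitian matrix with nonnegative diagonal and zero determinant is positive semidefinite, and this step is complete.

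For the tempered condition, decomposing $\widehat{K^+}(\omega)=\Kcos(\omega)-\imag\Ksin(\omega)$, the denominator of $r_{11}$ equals
\[
(\gamma-m\omega^2+\omega\beta\Ksin(\omega))^2+\omega^2(\lambda+\beta\Kcos(\omega))^2,
\]
so I would treat its two tails separately. As $|\omega|\to\infty$, the Riemann--Lebesgue-type bounds on $\Kcos$ and $\Ksin$ recorded in the Appendix (applicable in each of cases (II)(a)--(c)) force $\widehat{K^+}(\omega)$ to remain bounded (indeed, to vanish), so the $m^2\omega^4$ term dominates, producing $r_{11}=O(\omega^{-4})$, $r_{22}=O(\omega^{-2})$ and $|r_{12}|=O(|\omega|^{-3})$; all three are integrable at infinity. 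As $\omega\to 0$, the constant real part $\gamma>0$ keeps the denominator bounded below by a positive constant on a neighborhood of zero, so local integrability of $r_{ij}$ reduces to local integrability of the numerator $2\lambda+2\beta\Kcos(\omega)$. Moreover, since Assumption~\ref{cond:K:general}(I)(c) and the zero-free property of the denominator above rule out any interior singularities, $r_{11}$ is continuous away from $0$ and $\infty$, and $(1+\omega^2)^{-1}r_{ij}\in L^1(\rbb)$.

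The main obstacle is controlling the low-frequency behavior of $\Kcos$ in the heavy-tail regimes $K(t)\sim t^{-\alpha}$, $\alpha\in(0,1]$, where $\Kcos(\omega)$ ceases to be bounded near zero. Here one must rely on the Abelian-type asymptotics $\Kcos(\omega)\sim c\,\omega^{\alpha-1}$ (with the logarithmic correction at $\alpha=1$) supplied by the Appendix Fourier-transform lemmas; since $\alpha-1>-1$, these singularities are locally integrable, and the verification goes through. Once both properties are in hand, an application of Theorem~\ref{thm:weak-stationary-distribution:characterization} produces a weakly stationary operator with spectral measure $\nu$, which is the assertion of the lemma.
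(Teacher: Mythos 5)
Your proposal is correct and follows essentially the same route as the paper: check that $(r_{ij}(\omega))$ is Hermitian nonnegative definite a.e., establish integrability at infinity from the vanishing of $\Kcos,\Ksin$ and near the origin from the Abelian asymptotics of Lemma~\ref{lem:Kcos-Ksin:omega->0}, and then apply Theorem~\ref{thm:weak-stationary-distribution:characterization} (with $p=0$) together with the absolute continuity of $\nu$ to pass from a stationary distribution to an operator as in Definition~\ref{def:weak-operator}. The only cosmetic difference is that near $\omega=0$ you bound the denominator below by a positive constant (using $\gamma>0$) and reduce matters to the local integrability of the numerator $2\lambda+2\beta\Kcos(\omega)$, whereas the paper computes the exact near-zero asymptotics of $r_{11}$ in each of the three kernel regimes, limits it reuses later in the proof of Theorem~\ref{thm:asymptotic-growth}.
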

\begin{proof}
By symmetry, the Fourier transform $\widehat{K}$ of $K$ satisfies $\widehat{K}=2\Kcos$. Thus, we can rewrite $r_{11}$ in~\eqref{form:spectral-density:x} as
\begin{equation} \label{form:spectral-density:x:Kcos-Ksin}
r_{11}(\omega) = \frac{2(\lambda+\beta
\Kcos(\omega))}{|\gamma-m\omega^2+\beta\omega\Ksin(\omega)|^2+\omega^2|\lambda+\beta\Kcos(\omega)|^2}.
\end{equation}
By Assumption~\ref{cond:K:general} (I) (c), $\Kcos$ is positive. Hence, $r_{11}$ is positive a.e., and so is $r_{22}(\omega)=\omega^2 r_{11}(\omega)$. In view of \eqref{form:spectral-density:x}--\eqref{form:spectral-density:xv}, $\nu$ is a Hermitian nonnegative definite matrix a.e.

Next, we claim that both $r_{11}$ and $r_{22}$ are integrable. To see this, by symmetry again, we only need to consider $\omega\in [0,\infty)$. In addition, due to continuity, we only need to check integrability at $\omega\to\infty$ and around the origin. On one hand, as $\omega\to\infty$, we invoke~\eqref{lim:Kcos-Ksin:omega->infinity} to conclude that $\Kcos(\omega)$ and $\Ksin(\omega)$ converge to zero. It follows that $r_{11}(\omega)$ is dominated by $\omega^{-4}$, which also implies that $r_{22}$ is dominated by $\omega^{-2}$. This proves integrability at infinity. On the other hand, when $\omega$ is near the origin, there are three cases to be considered, depending on the behavior of $K(t)$.\\

\noindent \textbf{Case 1}: $K$ is integrable, cf. Assumption~\ref{cond:K:general} (II) (a). By virtue of Lemma~\ref{lem:Kcos-Ksin:omega->0} (a), it is clear that
\begin{align} \label{lim:r_[11].r_[22]:omega->0:diffusive}
r_{11}(\omega)\to \frac{2\lambda}{\gamma^2}\quad\text{and}\quad r_{22}(\omega)\to 0,\quad\text{as}\quad\omega\to 0.
\end{align}

\noindent \textbf{Case 2}: $K\sim t^{-1}$ as $t\to\infty$, cf. Assumption~\ref{cond:K:general} (II) (b). From~\eqref{form:spectral-density:x:Kcos-Ksin}, we have
\begin{align*}
\frac{r_{11}(\omega)}{|\log(\omega)|} =\frac{2\lambda/|\log(\omega)|+2\beta\Kcos(\omega)/|\log (\omega)|}
{|\gamma-m\omega^2+\beta\omega\Ksin(\omega)|^2+\omega^2|\lambda+\beta\Kcos(\omega)|^2}.
\end{align*}
By~\eqref{lim:Kcos-Ksin:omega->0:critial},
\begin{align} \label{lim:r_[11].r_[22]:omega->0:critical}
r_{11}(\omega)\sim |\log(\omega)|\quad\text{and}\quad r_{22}(\omega)\to 0,\quad\text{as }\omega\to 0.
\end{align}

\noindent \textbf{Case 3}: For some $\alpha\in(0,1)$, $K\sim t^{-\alpha}$ as $t\to\infty$, cf. Assumption~\ref{cond:K:general} (II) (c). Similarly to Case 2, from~\eqref{form:spectral-density:x:Kcos-Ksin}, we obtain
\begin{align*}
\frac{r_{11}(\omega)}{\omega^{\alpha-1}} =\frac{2\lambda\omega^{1-\alpha}+2\beta\omega^{1-\alpha}\Kcos(\omega)}
{|\gamma-m\omega^2+\beta\omega\Ksin(\omega)|^2+\omega^2|\lambda+\beta\Kcos(\omega)|^2}.
\end{align*}
In view of~\eqref{lim:Kcos-Ksin:omega->0:subdiffusion},
\begin{align} \label{lim:r_[11].r_[22]:omega->0:subdiffusive}
r_{11}(\omega)\sim \omega^{\alpha-1}\quad\text{and}\quad r_{22}(\omega)=\omega^2 r_{11}(\omega)\to 0,\quad\text{as }\omega\to 0.
\end{align}
In all three cases, both $r_{11}$ and $r_{22}$ are integrable near the origin. Since they are also integrable at $\infty$, they are integrable on $\rbb$, as claimed.

As a consequence, in view of~\eqref{form:spectral-density:xv}, by Cauchy-Schwarz inequality
\begin{align*}
\int_\rbb |r_{12}(\omega)|\d\omega=\int_\rbb |r_{21}(\omega)|\d\omega\le \Big(\int_{\rbb}r_{11}(\omega)\d\omega\Big)^{1/2}\Big(\int_\rbb r_{22}(\omega)\d\omega\Big)^{1/2}<\infty.
\end{align*}
It follows that $k_BT(2\pi)^{-1} (r_{ij})_{1\le i,j\le 2}$ satisfies inequality~\eqref{cond:weak-stationary-distribution:int.f(omega)/(1+omega^2)^p<infty} with $p=0$. By virtue of Theorem~\ref{thm:weak-stationary-distribution:characterization}, this implies the existence of a unique stationary distribution $G$ whose spectral density is $k_BT(2\pi)^{-1} (r_{ij})$. Furthermore since $\nu$ is absolutely continuous with respect to the Lebesgue measure, there exists a unique weak stationary operator $\Phi:\dom(\Phi)\subset\Sc'\to L^2(\Omega)^2$ extending $G$ as in Definition~\ref{def:weak-operator}. Thus, the proof is complete.
\end{proof}

Theorem~\ref{thm:well-posed} asserts that $\Phi$ is, indeed, a weak solution of \eqref{eq:GLE:2D}. The argument is based on that of~\cite[Theorem~4.5]{mckinley2018anomalous} tailored to our setting.

\begin{proof}[Proof of Theorem~\ref{thm:well-posed}] ($\Rightarrow$) Let $\Phi=(X,V)$ be a stationary operator associated with a spectral measure $\nu(\d\omega)=k_BT(2\pi)^{-1} (r_{ij}(\omega)\d\omega)_{1\le i,j\le 2}$. Suppose $\Phi$ is a weak solution for \eqref{eq:GLE:2D}. For $\f\in\Sc$, consider $\Psi(\f)$ as in~\eqref{form:-m.phi'+lambda.phi+K^+*phitilde}. Its Fourier transform in $\Sc'$ is given by
\begin{align} \label{form:-m.phi'+lambda.phi+K^+*phitilde:Fourier-transform}
\F{\Psi(\f)}&= \F{-m\varphi'+\lambda\f+\beta (K^+*\phitil)} =(\overline{\imag m\omega+\lambda+\beta\widehat{K^+}})\cdot\widehat{\varphi}.	
\end{align}
For any $\f_1,\,\f_2\in\Sc$, in view of~\eqref{form:covariance:E[X,X]=E[V,V]=E[X,V]} together with~\eqref{form:stationary-distribution:E[<G,phi1><G,phi2>]} for stationary operators, we have
\begin{align}\label{e:c*int_w^2_phi1_phi2_r11}
\frac{k_BT}{2\pi}\int_\rbb \omega^2\widehat{\f_1}(\omega)\overline{\widehat{\f_2}(\omega)}r_{11}(\omega)\d\omega&=\frac{k_BT}{2\pi}\int_\rbb \widehat{\f_1}(\omega)\overline{\widehat{\f_2}(\omega)}r_{22}(\omega)\d\omega \nonumber\\
&=-\imag \frac{k_BT}{2\pi}\int_\rbb \omega\widehat{\f_1}(\omega)\overline{\widehat{\f_2}(\omega)}r_{12}(\omega)\d\omega.
\end{align}
Recall that the Fourier transform is an automorphism on ${\mathcal S}$ \cite{strichartz2003guide}. Hence, we can rewrite \eqref{e:c*int_w^2_phi1_phi2_r11} as
\begin{align}\label{e:c*int_w^2_phi1_phi2_r11_after_Fourier}
\int_\rbb \omega^2\f_1(\omega)\f_2(\omega)r_{11}(\omega)\d\omega=\int_\rbb \f_1(\omega)\f_2(\omega)r_{22}(\omega)\d\omega=-\imag \int_\rbb \omega\f_1(\omega)\f_2(\omega)r_{12}(\omega)\d\omega.
\end{align}
Since \eqref{e:c*int_w^2_phi1_phi2_r11_after_Fourier} holds for any $\f_1,\,\f_2\in\Sc$, we conclude that, a.e.,
\begin{equation} \label{eq:omega^2.r_[11]=r_[22]}
r_{22}(\omega)=\omega^2 r_{11}(\omega), \quad r_{12}(\omega)=\imag \omega r_{11}(\omega)\quad\text{and}\quad r_{21}(\omega)=-\imag \omega r_{11}(\omega).
\end{equation}
Note that, in \eqref{eq:omega^2.r_[11]=r_[22]}, the last equality follows from the fact that $\nu$ is a Hermitian measure, so that $r_{21}=\overline{r_{12}}$.

It remains to show that $r_{11}$ is given by~\eqref{form:spectral-density:x} or, equivalently, by \eqref{form:spectral-density:x:Kcos-Ksin}. On one hand, by~\eqref{form:-m.phi'+lambda.phi+K^+*phitilde:Fourier-transform}, \eqref{eq:omega^2.r_[11]=r_[22]} and a simple calculation,
\begin{align*}
\MoveEqLeft[5]\E\Big[\big(\la V,\Psi(\f_1)\ra+\la X,\gamma \f_1\ra\big)\overline{(\la V,\Psi(\f_2)\ra+\la X,\gamma \f_2\ra)}\Big]\\
&= \frac{k_BT}{2\pi}\int_{\rbb} \big|\gamma-m\omega^2+\imag \omega(\lambda+\beta\widehat{K^+}(\omega))\big|^2\widehat{\f_1}(\omega)\overline{\widehat{\f_2}(\omega)}r_{11}(\omega)\d\omega.
\end{align*}
On the other hand, together with~\eqref{form:Wdot:covariance} and~\eqref{form:F:covariance}, the zero correlation assumption between $F$ and $\Wdot$ (see Assumption~\ref{cond:F-and-Wdot:independent}) implies that
\begin{align*}
\MoveEqLeft[5]k_BT\,\E\Big[ \la \sqrt{\beta}F+ \sqrt{2\lambda}\dot{W},\f_1\ra \overline{\la \sqrt{\beta}F+ \sqrt{2\lambda}\dot{W},\f_2\ra}\Big]\\
&=k_BT\beta\E\Big[ \la F,\f_1\ra \overline{\la F,\f_2\ra}\Big]+2k_BT\lambda\E\Big[ \la \dot{W},\f_1\ra \overline{\la \dot{W},\f_2\ra}\Big]\\&=\frac{k_BT}{2\pi}\int_\rbb \Big(2\lambda+\beta\widehat{K}(\omega)\Big)\widehat{\varphi_1}(\omega) \overline{\widehat{\f_2}(\omega) }\d\omega.
\end{align*}
In view of relation~\eqref{form:covariance:E[V+X,phi]=E[F+W,phi]}, for all $\f_1,\,\f_2\in\Sc$ we readily obtain
\begin{align*}
\int_{\rbb} \big|\gamma-m\omega^2+\imag \omega(\lambda+\beta\widehat{K^+}(\omega))\big|^2\widehat{\f_1}(\omega)\overline{\widehat{\f_2}(\omega)}r_{11}(\omega)\d\omega=\int_\rbb \left(2\lambda+\beta\widehat{K}(\omega)\right)\widehat{\varphi_1}(\omega) \overline{\widehat{\f_2}(\omega) }\d\omega.
\end{align*}
It follows that~\eqref{form:spectral-density:x} holds, namely,
\begin{align*}
r_{11}(\omega)=\frac{2\lambda+\beta\widehat{K}(\omega)}{\big|\gamma-m\omega^2+\imag \omega(\lambda+\beta\widehat{K^+}(\omega))\big|^2} \quad \textnormal{a.e.}
\end{align*}

($\Leftarrow$) Suppose $\Phi=(X,V)$ is the weakly stationary operator whose spectral density is given by~\eqref{form:spectral-density:x}--\eqref{form:spectral-density:xv}. We first check condition (a) in Definition \ref{def:weak-solution}.
In fact, by \eqref{form:-m.phi'+lambda.phi+K^+*phitilde:Fourier-transform},  
\begin{align}\label{e:E|<V,Psi(phi)>|^2}
\E|\la V,\Psi(\f)\ra|^2=\frac{k_BT}{2\pi}\int_\rbb |\imag m\omega+\lambda+\beta\widehat{K^+}(\omega)|^2|\widehat{\f}(\omega)|^2\omega^2 r_{11}(\omega)\d\omega.
\end{align}
Similarly to the proof of Lemma~\ref{lem:r_(ij):L1}, it suffices to consider the integrand in \eqref{e:E|<V,Psi(phi)>|^2} as $\omega$ tends to infinity and for $\omega$ around the origin. On one hand, since $r_{11}\sim \omega^{-4}$ as $\omega\to\infty$, it is clear that the integrand \eqref{e:E|<V,Psi(phi)>|^2} is dominated by $\widehat{\f}$, which is integrable. On the other hand, in view of Lemma~\ref{lem:Kcos-Ksin:omega->0}, $|\imag m\omega+\lambda+\beta\widehat{K^+}(\omega)|^2|\omega^2$ tends to zero as $\omega\to 0$. It follows that, around the origin, the integrand is dominated by $ r_{11}(\omega)$ which is integrable (see the proof of Lemma~\ref{lem:r_(ij):L1}).

To verify condition (b) in Definition~\ref{def:weak-solution}, one can adapt the calculation in part (a) so as to arrive at~\eqref{form:covariance:E[X,X]=E[V,V]=E[X,V]} and~\eqref{form:covariance:E[V+X,phi]=E[F+W,phi]}. The proof is thus complete.
\end{proof}

\begin{lemma} \label{lem:x(t)-v(t):continuous}
Let $(x(t),v(t))=\la \Phi,\delta_t\ra$ be the stochastic process defined by~\eqref{form:x(t)=<X,delta_t>}. Then $(x(t),v(t))$ is a well defined real stationary bivariate process. Moreover, $(x(t),v(t))$ has a continuous modification.
\end{lemma}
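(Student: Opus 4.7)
The proof naturally splits into four parts. For well--definedness, the plan is to invoke Lemma~\ref{lem:r_(ij):L1} (integrability of $r_{11}$ and $r_{22}$) together with Remark~\ref{rem:weak-oprator:point-process} in order to extend the action of $\Phi=(X,V)$ from Schwartz functions to Dirac point masses $\delta_t$. Equivalently, one exhibits a pair of complex Gaussian random spectral measures $(Z_1,Z_2)$ on $\rbb$ such that $x(t)=\int_\rbb e^{\imag \omega t}Z_1(\d\omega)$ and $v(t)=\int_\rbb e^{\imag \omega t}Z_2(\d\omega)$ lie in $L^2(\Omega)$; this representation also delivers stationarity at once, since a time shift $t\mapsto t+h$ only multiplies the integrand by the unimodular factor $e^{\imag \omega h}$, which leaves the joint covariance structure invariant.

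For real--valuedness, I would verify the Hermitian symmetry $r_{ij}(-\omega)=\overline{r_{ij}(\omega)}$ of the spectral matrix. Since $K$ is even, $\Kcos$ is even and $\Ksin$ is odd; expanding $\widehat{K^+}=\Kcos-\imag\Ksin$ in~\eqref{form:spectral-density:x:Kcos-Ksin} shows that the numerator $2\lambda+\beta\widehat K(\omega)$ and the denominator $|\gamma-m\omega^2+\beta\omega\Ksin(\omega)|^2+\omega^2|\lambda+\beta\Kcos(\omega)|^2$ are both real and even in $\omega$. Consequently $r_{11}$ and $r_{22}$ are real and even, whereas $r_{12}(\omega)=\imag\omega r_{11}(\omega)$ is purely imaginary and odd, which is exactly the Hermitian symmetry required for $(x(t),v(t))$ to take values in $\rbb^2$.

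The substantive step is the existence of a continuous modification, for which I would apply Kolmogorov--Chentsov. Using the elementary inequality $|e^{\imag \omega h}-1|^2\leq c\,|\omega h|^{2\delta}$ valid for all $\delta\in[0,1]$, the spectral representation gives
\begin{align*}
\E|x(t+h)-x(t)|^2 &= \frac{k_BT}{2\pi}\int_\rbb |e^{\imag\omega h}-1|^2 r_{11}(\omega)\,\d\omega\leq c\,|h|^{2\delta}\int_\rbb|\omega|^{2\delta}r_{11}(\omega)\,\d\omega,\\
\E|v(t+h)-v(t)|^2 &\leq c\,|h|^{2\delta}\int_\rbb|\omega|^{2\delta}r_{22}(\omega)\,\d\omega.
\end{align*}
Choosing $\delta=1$ in the first line yields $\E|x(t+h)-x(t)|^2=O(h^2)$, since $\int_\rbb\omega^2 r_{11}\,\d\omega=\int_\rbb r_{22}\,\d\omega<\infty$ by Lemma~\ref{lem:r_(ij):L1}. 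For the second line, the asymptotics $r_{22}(\omega)\sim c\,\omega^{-2}$ as $|\omega|\to\infty$ (established in the proof of Lemma~\ref{lem:r_(ij):L1}) together with $r_{22}(\omega)\to 0$ as $\omega\to 0$ (cf.~\eqref{lim:r_[11].r_[22]:omega->0:diffusive}--\eqref{lim:r_[11].r_[22]:omega->0:subdiffusive}) ensure that $\int_\rbb|\omega|^{2\delta}r_{22}(\omega)\,\d\omega<\infty$ for every $\delta<1/2$, yielding $\E|v(t+h)-v(t)|^2=O(|h|^{2\delta})$ for such $\delta$. Since the increments of $(x,v)$ are jointly Gaussian, Gaussian moment comparison gives $\E|x(t+h)-x(t)|^{2p}+\E|v(t+h)-v(t)|^{2p}\leq C_p|h|^{2p\delta}$ for every integer $p\geq 1$. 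Picking $p$ large enough that $2p\delta>1$ then triggers the Kolmogorov--Chentsov continuity criterion, producing continuous modifications of both coordinates.

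The chief obstacle is the low regularity of the velocity component: $v(t)$ only admits H\"older control of order strictly less than $1/2$, so the $L^2$ increment estimate alone is too weak for Kolmogorov--Chentsov, which demands an exponent exceeding one. This is precisely where Gaussianity is essential, as it converts any polynomial decay of the second moment into arbitrarily fast polynomial decay of higher moments, easily surpassing the required threshold.
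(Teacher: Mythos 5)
Your proposal is correct, and the first two parts (well--definedness via Lemma~\ref{lem:r_(ij):L1} and Remark~\ref{rem:weak-oprator:point-process}, real--valuedness and stationarity via the Hermitian/even symmetry of the spectral matrix) coincide with the paper's argument, though you spell out the symmetry check that the paper delegates to a citation of It\^o. Where you genuinely diverge is the continuity step. The paper invokes the Cram\'er--Leadbetter criterion for stationary Gaussian processes: sample continuity follows once $\int_0^\infty [\log(1+\omega)]^a\big(r_{11}(\omega)+r_{22}(\omega)\big)\,\d\omega<\infty$ for some $a>3$, which is immediate from the bounds $r_{11}\lesssim\omega^{-4}$, $r_{22}\lesssim\omega^{-2}$ at infinity and integrability near the origin. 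You instead estimate $L^2$ increments through the spectral representation, obtaining $\E|x(t+h)-x(t)|^2=O(h^2)$ and $\E|v(t+h)-v(t)|^2=O(|h|^{2\delta})$ for every $\delta<1/2$, and then use Gaussian moment equivalence to feed Kolmogorov--Chentsov; the exponent bookkeeping ($2p\delta>1$ for $p$ large) is right, and applying the criterion componentwise does give a continuous modification of the pair. The trade-off: your route is self-contained modulo a textbook theorem and yields quantitative H\"older regularity as a by-product ($x$ of any order below $1$, $v$ of any order below $1/2$), but it needs polynomial moments of the spectral densities, i.e.\ $\int|\omega|^{2\delta}r_{22}(\omega)\,\d\omega<\infty$ for some $\delta>0$; the Cram\'er--Leadbetter condition asks only for logarithmic moments and would survive spectral tails like $\omega^{-1}(\log\omega)^{-5}$ where your argument breaks down. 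In the present setting the polynomial decay is available, so both proofs go through.
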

\begin{proof}
Establishing that the bivariate stochastic process $(x(t),v(t))$ is well defined is equivalent to showing that $\delta_t\in\dom(\Phi)$. In turn, the latter is equivalent to proving that $r_{11}$ and $r_{22}$ are integrable, cf.\ Remark~\ref{rem:weak-oprator:point-process}, which is established in the proof of Lemma~\ref{lem:r_(ij):L1}. In addition, since $r_{11}$ and $r_{22}$ are even functions, $x(t)$ and $v(t)$ are, indeed, real--valued (weakly) stationary processes \cite{ito1954stationary}.

Recall that, by \cite[Chapter 9.3]{cramer1967stationary}, if there exists a constant $a>3$ such that
\begin{equation}\label{e:cramer_and_leadbetter}
\int_0^\infty\close \left[\log(1+\omega)\right]^a\big( r_{11}(\omega)+r_{22}(\omega)\big) \d\omega <\infty,
\end{equation}
then $(x(t),v(t))$ has a continuous modification. In fact, following the proof of Lemma~\ref{lem:r_(ij):L1}, $r_{11}$ and $r_{22}$ are dominated by $\omega^{-4}$ and $\omega^{-2}$, respectively, as $\omega \rightarrow \infty$. Also, both functions are integrable around the origin. As a consequence, \eqref{e:cramer_and_leadbetter} does hold for any $a>3$. Therefore, $(x(t),v(t))$ has a continuous modification, as claimed.
\end{proof}

\begin{remark}\label{rem:int_0^t.x(s)ds=<X,1_[0,t]>} Since the bivariate, stationary stochastic process $(x(t),v(t))$ has a continuous modification, then we can define the integral $\int_0^t (x(s),v(s))\d s$ in the usual Riemann--Lebesgue sense. However, integration over $t$ may also be defined by means of the action $\la (X,V),1_{[0,t]}\ra$. Moreover, it can be shown that, for all $t\geq 0$,
$$
\E\Big[\Big(\int_0^t (x(s),v(s))\d s-\la (X,V),1_{[0,t]}\ra\Big)^*\Big(\int_0^t (x(s),v(s))\d s-\la (X,V),1_{[0,t]}\ra\Big)\Big]=0.
$$
This implies that, for every $t\ge 0$, these two notions of integration agree a.s.
\end{remark}

\subsection{Anomalous diffusion of $(x(t),v(t))$} \label{sec:asymptotic.growth}
In this section, we prove Theorem~\ref{thm:asymptotic-growth} on the asymptotic behavior of $\int_0^t(x(s), v(s))\d s$. While the result for $\E|\int_0^t v(s)\d s|^2$ and the cross--covariance between $x(t)$ and $v(t)$ are relatively straightforward, the asymptotics of $\bbE|\int_0^t x(s)\d s|^2$ requires a more careful analysis depending on three cases of $K$ as in Assumption~\ref{cond:K:general} (II). The approach that we are going to employ is similar to those in \cite[Section 5]{didier2020asymptotic} and \cite[Section 6]{mckinley2018anomalous}. For the reader's convenience, we first summarize the method to characterize the growth rate of $\E|\int_0^t x(s)\d s|^2$.\vspace{0.3cm}

\noindent \textbf{Step 1}: we relate the large (time) scale behavior of the memory $K$ to the behavior of $\Kcos(\omega)$ and $\Ksin(\omega)$ as $\omega\to 0$. This result appears in Lemma~\ref{lem:Kcos-Ksin:omega->0}. \vspace{0.1cm}

\noindent \textbf{Step 2}: similarly to the proof of Lemma~\ref{lem:r_(ij):L1}, we obtain the near--zero behavior of the spectral densities $r_{11}(\omega)$, the spectral density for $x(t)$ as in~\eqref{form:spectral-density:x} through that of $\Kcos(\omega)$ and $\Ksin(\omega)$ as $\omega\to 0$;\vspace{0.1cm}

\noindent \textbf{Step 3}: the behavior of $r_{11}(\omega)$ as $\omega \rightarrow 0$ and the Dominated Convergence Theorem are used to characterize the asymptotic growth of $\E\big|\int_0^tx(s)\d s\big|^2$.

\begin{proof}[Proof of Theorem~\ref{thm:asymptotic-growth}]

(a) Recall that $r_{12}=\imag \omega r_{11}(\omega)$ by relation~\eqref{form:spectral-density:xv}. By~\eqref{form:stationary-distribution:E[<G,phi1><G,phi2>]} for the operator $\Phi$,
\begin{align}\label{e:cross-corr_E_int_x_int_y}
\E\Big[\int_0^t x(s)\d s\overline{\int_0^t v(y)\d y}\Big]&=\int_0^t\int_0^t\E\big[\la X,\delta_s\ra\overline{\la V,\delta_y\ra}\big]\d s\d y \nonumber\\
&=\frac{k_BT}{2\pi}\int_0^t\int_0^t\int_\rbb e^{-\imag (s-y)\omega}r_{12}(\omega)\d\omega \d s\d y \nonumber\\
&=\frac{k_BT}{2\pi}\int_0^t\int_0^t\int_\rbb e^{-\imag (s-y)\omega}\imag \omega  r_{11}(\omega)\d\omega \d s\d y\nonumber\\
&=\frac{k_BT}{2\pi}\int_\rbb \Big|\frac{e^{\imag t\omega}-1}{\omega}\Big|^2 \imag \omega r_{11}(\omega)\d\omega=0.
\end{align}
The last equality in \eqref{e:cross-corr_E_int_x_int_y} is a consequence of the fact that the integrand is an odd function. This establishes (a).

(b) Similarly to part (a), we compute the second moment of $\int_0^t v(s)\d s$ using formula $r_{22}=\omega^2 r_{11}$ as in~\eqref{form:spectral-density:v} and covariance function~\eqref{form:stationary-distribution:E[<G,phi1><G,phi2>]} for $\Phi$. In fact,
\begin{align*}
\E\Big|\int_0^t v(s)\d s\Big|^2&=\int_0^t\int_0^t\E\big[\la V,\delta_s\ra\overline{\la V,\delta_y \ra}\big]\d s\d y\\
&=\frac{k_BT}{2\pi}\int_0^t\int_0^t\int_\rbb e^{-\imag (s-y)\omega}\omega^2  r_{11}(\omega)\d\omega \d s\d y\\
&= \frac{k_BT}{2\pi}\int_\rbb 2(1-\cos(t\omega)) r_{11}(\omega)\d\omega\\
&=2\,\E|x(0)|^2-\frac{k_BT}{\pi}\int_\rbb \cos(t\omega)r_{11}(\omega)\d\omega.
\end{align*}
Since $r_{11}$ is integrable by virtue of the proof of Lemma~\ref{lem:r_(ij):L1}, its Fourier cosine transform converges to zero as $t$ tends to infinity. This establishes part (b).

(c) As in the proofs of parts (a) and (b), note that the second moment of $\int_0^t x(s)\d s$ can be written explicitly as
\begin{equation} \label{eq:E|int_0^t.x(s)ds|^2}
\begin{aligned}
\E\Big|\int_0^tx(s)\d s\Big|^2=\frac{2k_BT}{\pi}\int_0^\infty \frac{1-\cos(t\omega)}{\omega^2} r_{11}(\omega)\d\omega,
\end{aligned}
\end{equation}
where $r_{11}$ is the even function given by~\eqref{form:spectral-density:x:Kcos-Ksin}. Now, there are three situations depending on the asymptotic behavior of $K$ as characterized in Assumption~\ref{cond:K:general} (II).\\

\noindent \textbf{Case 1}: $K$ is integrable (Assumption~\ref{cond:K:general} (II) (a)). By a change of variable $u:=t\omega$ in~\eqref{eq:E|int_0^t.x(s)ds|^2}, we obtain
\begin{equation} \label{eq:E|int_0^t.x(s)ds|^2:u=t.omega}
\E\Big|\intx\Big|^2=t\frac{2k_BT}{\pi}\int_0^\infty \frac{1-\cos(u)}{u^2}
r_{11}\left(\frac{u}{t}\right)\d u.
\end{equation}
\[\]
Similarly to the proof of Lemma~\ref{lem:r_(ij):L1}, on one hand, as $\omega$ tends to infinity, $r_{11}$ converges to zero. On the other hand, by virtue of relation~\eqref{lim:r_[11].r_[22]:omega->0:diffusive}, $r_{11}(\omega)$ converges to $2\lambda/\gamma^2$ as $\omega \rightarrow 0$. In other words, $r_{11}$ is bounded on $[0,\infty)$. As a consequence, by the Dominated Convergence Theorem, we arrive at the limit
\begin{align*}
\frac{1}{t}\hspace{0.5mm}\E\Big|\intx\Big|^2=\frac{2k_BT}{\pi}\int_0^\infty \frac{1-\cos(u)}{u^2}
r_{11}\left(\frac{z}{t}\right)\d z\to \frac{4k_BT\lambda}{\pi\gamma^2}\int_0^\infty \frac{1-\cos(u)}{u^2}
\d u,
\end{align*}
as $t \rightarrow \infty$.\\

\noindent \textbf{Case 2}: $K\sim t^{-1}$ as $t\to\infty$ (Assumption~\ref{cond:K:general} (II) (b)). In this situation, $r_{11}(\omega)\sim |\log(\omega)|$ as $\omega\to 0$ (see~\eqref{lim:Kcos-Ksin:omega->0:critial}). In particular, $\sup_{\omega\in(0,1/2)}r_{11}(\omega)/|\log(\omega)|$ is finite.

Starting from~\eqref{eq:E|int_0^t.x(s)ds|^2:u=t.omega}, recast
\begin{equation} \label{eqn:thm:asymptotic:critical:1}
\begin{aligned}
\frac{\pi}{2k_BT\,t\log(t)}  \E\Big|\intx\Big|^2
=\frac{1}{\log(t)}\int_0^\infty\frac{1-\cos(u)}{u^2}
r_{11}\left(\frac{u}{t}\right)\d u.
\end{aligned}
\end{equation}
We want to show that the right-hand side of \eqref{eqn:thm:asymptotic:critical:1} converges to a finite limit as $t \rightarrow \infty$. To this end, we first decompose the integral into three terms, i.e.,
\begin{align*}
\frac{1}{\log(t)}\int^{\infty}_0\frac{1-\cos(u)}{u^2}
r_{11}\left(\frac{u}{t}\right)\d u & = \frac{1}{\log(t)}\Big\{\int_0^{e^{-2}}\close+\int_{e^{-2}}^{t/2}+\int_{t/2}^\infty \Big\} \frac{1-\cos(u)}{u^2}
r_{11}\left(\frac{u}{t}\right)\d u\\
&=I_1(t)+I_2(t)+I_3(t).
\end{align*}
 With regard to $I_3$, recall from the proof of Lemma~\ref{lem:r_(ij):L1} that $r_{11}(\omega)\sim \omega^{-4}$ as $\omega\to\infty$. Then,
\begin{align}\label{e:lim_I3(t)}
0 \leq \hspace{1mm}I_3(t)=\frac{1}{\log(t)}\int_{t/2}^\infty \frac{1-\cos(u)}{u^2}
r_{11}\left(\frac{u}{t}\right)\d u&\leq \frac{1}{\log(t)}\int_{t/2}^\infty  \frac{1-\cos(u)}{u^2}\d u \cdot \sup_{\omega\ge 1/2}r_{11}(\omega)\\
&\leq \frac{C}{t\log(t)} \rightarrow 0, \quad t\to\infty.
\end{align}

Concerning $I_1(t)$, rewrite
\begin{align*}
I_1(t)&=\frac{1}{\log(t)}\int_{0}^{e^{-2}} \frac{1-\cos(u)}{u^2}
r_{11}\left(\frac{u}{t}\right)\d u=\int_{0}^{e^{-2}} \frac{1-\cos(u)}{u^2}
\cdot\frac{\log(t/u)}{\log(t)}\cdot\frac{
 r_{11}(u/t)}{|\log(u/t)|}\d u.
\end{align*}
Note that, for sufficiently large $t$ and for all $u\in(0,e^{-2})$,
$$
\frac{\log(t/u)}{\log(t)}\le |\log(u)|.
$$
Together with~\eqref{lim:r_[11].r_[22]:omega->0:critical}, this implies that
\begin{align*}
\frac{\log(t/u)}{\log(t)}\cdot\frac{
 r_{11}(u/t)}{|\log(u/t)|}\le |\log(u)|\sup_{0<\omega<1/2}\frac{r_{11}(\omega)}{|\log(\omega)|}.
\end{align*}
It follows from Lemma~\ref{lem:Kcos-Ksin:omega->0}, (b), combined with the Dominated Convergence Theorem, that
\begin{align}\label{e:lim_I1(t)}
\lim_{t\to\infty}I_1(t)=\int_0^{e^{-2}} \frac{1-\cos(u)}{u^2}
\d u \cdot \lim_{\omega\to0}\frac{r_{11}(\omega)}{|\log(\omega)|}\in (0,\infty).
\end{align}

Regarding $I_2(t)$, similarly to $I_1(t)$, we note that, for all $u\in(e^{-2},t/2)$,
$$
\frac{\log(t/u)}{\log(t)}\le \frac{\log(t)+2}{\log(t)}<2.
$$
So,
\begin{align*}
\frac{\log(t/u)}{\log(t)}\cdot\frac{
 r_{11}(u/t)}{|\log(u/t)|}\le 2\sup_{0<\omega<1/2}\frac{r_{11}(\omega)}{|\log(\omega)|}.
\end{align*}
In light of the Dominated Convergence Theorem together with Lemma~\ref{lem:Kcos-Ksin:omega->0}, (b), we obtain
\begin{align}\label{e:lim_I2(t)}
\lim_{t\to\infty}I_2(t)=\int_{e^{-2}}^\infty \frac{1-\cos(u)}{u^2}
\d u \cdot \lim_{\omega\to0}\frac{r_{11}(\omega)}{|\log(\omega)|}\in (0,\infty).
\end{align}
The asymptotic expression for $\E\Big|\intx\Big|^2$ now follows from~\eqref{eqn:thm:asymptotic:critical:1}, \eqref{e:lim_I3(t)}, \eqref{e:lim_I1(t)} and \eqref{e:lim_I2(t)}.

\noindent \textbf{Case 3}: For some $\alpha\in(0,1)$, $K\sim t^{-\alpha}$ as $t\to\infty$ (Assumption~\ref{cond:K:general} (II) (c)). Note that \eqref{eq:E|int_0^t.x(s)ds|^2:u=t.omega} may be rewritten as
\begin{equation*}
\frac{1}{t^{2-\alpha}}\E\Big|\intx\Big|^2=\frac{2k_BT}{\pi}\int_0^\infty \frac{1-\cos(u)}{u^{3-\alpha}}\cdot
\frac{r_{11}(u/t)}{(u/t)^{\alpha-1}}\d u.
\end{equation*}
On one hand, for large $\omega$, $r_{11}(\omega) \leq C \omega^{-4}$. Thus, $r_{11}(\omega)/\omega^{\alpha-1} \rightarrow 0$ as $\omega \rightarrow \infty$. On the other hand, as $\omega \rightarrow 0$, relation~\eqref{lim:r_[11].r_[22]:omega->0:subdiffusive} implies that $r_{11}(\omega)/\omega^{\alpha-1}$ has a finite limit. In particular, this also implies that $r_{11}(\omega)/\omega^{\alpha-1}$ is bounded on $(0,\infty)$. In light of the Dominated Convergence Theorem together with~\eqref{lim:r_[11].r_[22]:omega->0:subdiffusive}, we obtain
\begin{equation*}
\frac{1}{t^{2-\alpha}}\hspace{0.5mm}\E\Big|\intx\Big|^2=\frac{2}{\pi}\int_0^\infty \frac{1-\cos(u)}{u^{3-\alpha}}\cdot
\frac{r_{11}(u/t)}{(u/t)^{\alpha-1}}\d u\to  c\in(0,\infty),
\end{equation*}
as $t \rightarrow \infty$. This completes the proof.
\end{proof}

\subsection{Equipartition of Energy} \label{sec:equipartition}

In what follows, we provide the proofs of Theorems~\ref{thm:equipartition-of-energy:1D:gamma=0:power-law.exponential} and~\ref{thm:equipartition-of-energy:2D:gamma>0:powerlaw.exponential}. So, let
$$
\cbb^+=\{u+\imag v:u\in\rbb, v\ge 0\}\quad \text{and}\quad\cbb^-=\{u+\imag v:u\in\rbb, v\le 0\}
$$
be the upper half and lower half complex plane, respectively. Also, let
$$
\cbb^*=\{z:\Re(z)\le 0\}
$$
be the left half plane of nonpositive real part in $\cbb$.

\subsubsection{\bf{Free-particle case ($\boldsymbol{\gamma=0)}$} } \label{sec:equipartition:1D:gamma=0}

In this subsection, we consider the case of a free particle as in equation~\eqref{eq:GLE:1D}. Our approach builds upon the work in \cite{hohenegger2017equipartition,kou2008stochastic}.

We introduce $f_1(z)$, the complex--valued function given by
\begin{equation} \label{form:f_1(z)}
f_1(z)=\frac{1}{\lambda +\beta (\Kcos(z)-\imag\Ksin(z)) + \imag mz}.
\end{equation}
The function $f_1(z)$ is closely related to the expressions for spectral densities $r_{22}$ and $r_{11}$, respectively, as in~\eqref{form:spectral-density:v} and~\eqref{form:spectral-density:x:Kcos-Ksin}, and will be used in the proof of Lemma 4.4 (see also \eqref{form:E[v(0)^2]} in the proof of Lemma~\ref{lem:equiparition:1D:gamma=0}).

\begin{remark} \label{rem:f_1:well-defined}
Note that, whereas $\Kcos(\omega)$ and $\Ksin(\omega)$ are well-defined for $\omega\in\rbb\setminus\{0\}$ (see Lemmas~\ref{lem:complete-monotone:Kcos-Ksin} and~\ref{lem:complete-monotone:Kcos-Ksin:K(t)=phi(t^2)}), $\Kcos(z)$ and $\Ksin(z)$ need not be for every $z\in\cbb\setminus\{0\}$. Hence, in formula~\eqref{form:f_1(z)}, $\Kcos(z)-\imag\Ksin(z)$ is understood as the integrals in either~\eqref{form:complete-monotone:Kcos-Ksin} or \eqref{form:complete-monotone:Kcos-Ksin:K(t)=phi(t^2)} extended to $\cbb$, depending on either $K\in\CM$ or $K=\f(t^2)$, $\f\in\CM$, respectively. Later in the proof of Theorem~\ref{thm:equipartition-of-energy:1D:gamma=0:power-law.exponential}, we will see that $\Kcos(z)-\imag\Ksin(z)$ is actually analytic on suitable subspaces of $\cbb$.
\end{remark}

For a large constant $R>0$, define, respectively, the outer circle and inner half circle in $\cbb^+$ as
\begin{equation} \label{form:curve:C_R.and.C_(1/R)}
 C_R^+=\{Re^{\imag \theta}:0\le \theta\le \pi\} \quad\text{and}\quad C_{1/R}^+= \{e^{\imag \theta}/R:0 \le \theta\le \pi\}.
\end{equation}
Further define their counterparts in $\cbb^-$ as
\begin{equation} \label{form:curve:C_R^-.and.C_(1/R)^-}
 C_R^-=\{Re^{\imag \theta}:-\pi\le \theta\le 0\} \quad\text{and}\quad C_{1/R}^-= \{e^{\imag \theta}/R:-\pi\le \theta\le 0\}.
\end{equation}
Also, let
\begin{equation}\label{form:curve:C(R)}
C(R)=[-R,-1/R]\cup C^-_{1/R}\cup[1/R,R]\cup C_R^-
\end{equation}
be a closed curve in $\cbb^-$, oriented clockwise.

Before discussing the proof of Theorem~\ref{thm:equipartition-of-energy:1D:gamma=0:power-law.exponential}, it is illuminating to recapitulate some technical aspects of previous work. In~\cite{hohenegger2017equipartition}, establishing \eqref{e:E[mv(0)^2]=kBT_intro} for the case of generalized Rouse kernels involved considering a complex--valued function similar to $f_1$ as in~\eqref{form:f_1(z)} and its contour integrals on the upper half plane $\cbb^+$. The argument relies heavily on a careful analysis of the locations of the poles of the functions involved. In turn, in \cite{kou2008stochastic}, establishing \eqref{e:E[mv(0)^2]=kBT_intro} for the class of memory kernels \eqref{e:t^(-alpha)} involved employing an integration trick via a smart change of variables.

Nevertheless, neither approach is available in the more general framework of this paper, which involves memory kernels that either are in $\CM$ or which have the form $\f(t^2)$ for $\f\in \CM$. As in~\cite{hohenegger2017equipartition}, we investigate contour integrals of $f_1(z)$ as in~\eqref{form:f_1(z)}. However, we shift the analysis to the \textit{lower} half complex plane $\cbb^-$. As it turns out, unlike in \cite{hohenegger2017equipartition}, dealing with poles is not needed when $K\in\CM$ since, in this case, the function $f_1(z)$ is analytic in $\cbb^-\sm$.

For the reader's convenience, we summarize the idea of the proof of Theorem~\ref{thm:equipartition-of-energy:1D:gamma=0:power-law.exponential}. The argument essentially consists of three steps as follows.\vspace{0.3cm}

\noindent \textbf{step 1}: We first consider $f_1(z)$ as in~\eqref{form:f_1(z)} and show that this function is analytic on $\cbb^-\sm$. This is established via the auxiliary results Lemma~\ref{lem:p(x).q_1(x)} and Lemma~\ref{lem:ptilde.qtilde_1}, respectively, for the cases $K\in\CM$ and $K(t)=\f(t^2)$, $\f\in\CM$.\vspace{0.1cm}

\noindent \textbf{step 2}: Next, we consider the contour integral on $\cbb^-\sm$ given by
\begin{equation}\label{e:in_f1(z)dz=0}
\int_{C(R)}\close f_1(z)\d z=\Big\{\int_{-R}^{-1/R}\close+\int_{C^-_{1/R}}\close+\int_{1/R}^{R}+\int_{C_R^-}\Big\}f_1(z)\d z=0.
\end{equation}
In \eqref{e:in_f1(z)dz=0}, the second equality holds by the analyticity of $f_1(z)$, as established in step 1. Then, we show that, as $R\to\infty$, the sum of the first and third integrals in \eqref{e:in_f1(z)dz=0} converges to $\E[m\,  v(0)^2]$, whereas the sum of the two remaining integrals converges to $-k_BT$. This establishes equipartition of energy for $v(t)$. This is discussed in detail in the proof of another auxiliary result, namely, Lemma~\ref{lem:equiparition:1D:gamma=0}, which states sufficient conditions on $f_1$, $\Kcos$ and $\Ksin$ for equipartition of energy to hold for the system~\eqref{eq:GLE:1D}.\vspace{0.1cm}

\noindent \textbf{step 3}: We prove Theorem~\ref{thm:equipartition-of-energy:1D:gamma=0:power-law.exponential} by verifying the assumptions of Lemma~\ref{lem:equiparition:1D:gamma=0}, while making use of Lemma~\ref{lem:p(x).q_1(x)} and Lemma~\ref{lem:ptilde.qtilde_1} depending on whether $K\in {\mathcal CM}$ or $K(t) = \varphi(t^2)$, $\varphi \in {\mathcal CM}$, respectively.\\

For the sake of clarity, the proofs of Lemmas ~\ref{lem:equiparition:1D:gamma=0}--\ref{lem:ptilde.qtilde_1} will be deferred to the end of this section. We start by stating Lemma \ref{lem:equiparition:1D:gamma=0}, where equipartition of energy is established directly based on assumptions on $f_1$, $\Kcos$ and $\Ksin$.
\begin{lemma} \label{lem:equiparition:1D:gamma=0} Suppose that $\gamma=0$. Let $v(t)$ be the process associated with the weak solution $V$ of~\eqref{eq:GLE:1D} and $f_1(z)$ be as in~\eqref{form:f_1(z)}. Suppose that
\begin{equation} \label{cond:lim.Kcos-iKsin/z=0:C^-}
\lim_{|z|\to\infty, \hspace{0.5mm}z \in \cbb^-}\frac{|\Kcos(z)-\imag \Ksin(z)|}{|z|}=0,
\end{equation}
and
\begin{equation} \label{cond:lim:zf_1(z)=0:C^-}
\lim_{|z|\to0, \hspace{0.5mm}z \in \cbb^-}|zf_1(z)|=0.
\end{equation}
Furthermore, suppose that, for all large enough $R>0$,
\begin{equation} \label{cond:int_C(R) f_1(z)=0}
\int_{C(R)}\close f_1(z)\d z=0,
\end{equation}
where $C(R)$ is the curve~\eqref{form:curve:C(R)}. Then,
\begin{equation}\label{e:E[m,v(0)^2]=k_BT_lemma}
\E[m\, v(0)^2]=k_BT.
\end{equation}
\end{lemma}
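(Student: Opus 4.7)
My plan is to express $\E[m\,v(0)^2]$ as a real integral of $\Re f_1$ and then to evaluate it by deforming the real axis into a closed contour in $\cbb^-$, using the hypothesized identity $\int_{C(R)}f_1\,\d z=0$. The hypotheses~\eqref{cond:lim.Kcos-iKsin/z=0:C^-} and~\eqref{cond:lim:zf_1(z)=0:C^-} will exactly control the two arc contributions at infinity and at zero, respectively.

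First, I identify the spectral density. Substituting $\gamma=0$, $\widehat K(\omega)=2\Kcos(\omega)$, and $\widehat{K^+}(\omega)=\Kcos(\omega)-\imag\Ksin(\omega)$ into~\eqref{form:spectral-density:x} and~\eqref{form:spectral-density:v}, the factor $\omega^2$ from $r_{22}=\omega^2 r_{11}$ cancels the $\omega^2$ arising in the denominator, and a direct computation yields $r_{22}(\omega)=2\Re f_1(\omega)$. Since $r_{22}\in L^1(\rbb)$ by the proof of Lemma~\ref{lem:r_(ij):L1}, the spectral representation together with $v(t)=\la V,\delta_t\ra$ gives
\begin{equation*}
\E[m\,v(0)^2]=\frac{mk_BT}{2\pi}\int_\rbb r_{22}(\omega)\,\d\omega=\frac{mk_BT}{\pi}\int_\rbb \Re f_1(\omega)\,\d\omega.
\end{equation*}
From the symmetries $\Kcos(-\omega)=\Kcos(\omega)$ and $\Ksin(-\omega)=-\Ksin(\omega)$, one checks $f_1(-\omega)=\overline{f_1(\omega)}$, so $\Im f_1$ is odd and $\int_\rbb\Re f_1\,\d\omega=\lim_{R\to\infty}\int_{-R}^R f_1(\omega)\,\d\omega$.

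I then invoke~\eqref{cond:int_C(R) f_1(z)=0} and split
\begin{equation*}
0 = \Big(\int_{-R}^{-1/R}+\int_{1/R}^R\Big) f_1(\omega)\,\d\omega + \int_{C_{1/R}^-}f_1(z)\,\d z + \int_{C_R^-}f_1(z)\,\d z.
\end{equation*}
The real-axis pieces tend to $\int_\rbb\Re f_1\,\d\omega$ as $R\to\infty$. For the inner arc, writing $z=(1/R)e^{\imag\theta}$, $\theta\in[-\pi,0]$, and using $\d z=\imag z\,\d\theta$ yields $\bigl|\int_{C_{1/R}^-}f_1\,\d z\bigr|\le\pi\sup_{|z|=1/R}|zf_1(z)|\to 0$ by~\eqref{cond:lim:zf_1(z)=0:C^-}. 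For the outer arc, I invoke~\eqref{cond:lim.Kcos-iKsin/z=0:C^-} to expand, for $|z|$ large in $\cbb^-$,
\begin{equation*}
f_1(z)=\frac{1}{\imag m z}+g(z),\qquad \sup_{z\in C_R^-}|z\,g(z)|\to 0\text{ as }R\to\infty,
\end{equation*}
where $g$ gathers the lower-order correction. Parameterizing $C_R^-$ by $z=Re^{\imag\theta}$ with $\theta$ decreasing from $0$ to $-\pi$ (consistent with the clockwise orientation of $C(R)$), the leading term contributes $\int_0^{-\pi}(1/m)\,\d\theta=-\pi/m$ while the $g$-contribution is $o(1)$. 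Summing the three pieces and passing to the limit gives $0=\int_\rbb\Re f_1\,\d\omega-\pi/m$, whence $\int_\rbb \Re f_1=\pi/m$ and~\eqref{e:E[m,v(0)^2]=k_BT_lemma} follows.

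I expect the main obstacle to be justifying the outer-arc asymptotic: one must show that $|\lambda+\beta(\Kcos(z)-\imag\Ksin(z))|/|\imag m z|$ can be made uniformly less than, say, $1/2$ on $C_R^-$ for all $R$ large, so that the geometric expansion of $1/(1+\cdot)$ is valid and the remainder is $o(1)$ uniformly in the angle. A secondary subtlety is that $\Im f_1$ is not in $L^1(\rbb)$ (it decays only like $1/\omega$), so the real-axis integral has to be interpreted as a symmetric limit; oddness of $\Im f_1$ makes this harmless.
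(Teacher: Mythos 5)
Your proposal is correct and follows essentially the same route as the paper: identify $r_{22}(\omega)=2\Re f_1(\omega)$ (the paper computes the sum of the two real-axis pieces directly via the parity of $\Kcos,\Ksin$, which amounts to the same thing), kill the inner arc via~\eqref{cond:lim:zf_1(z)=0:C^-}, and extract $-\pi/m$ from the outer arc via~\eqref{cond:lim.Kcos-iKsin/z=0:C^-}, the paper using dominated convergence where you use a uniform geometric expansion. The two treatments of the arcs are interchangeable, so there is nothing substantive to add.
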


Next, we state Lemma \ref{lem:p(x).q_1(x)}, which is employed in showing that $f_1$ as in~\eqref{form:f_1(z)} is analytic on $\cbb^-\sm$ when $K\in\CM$.
\begin{lemma} \label{lem:p(x).q_1(x)}
Let $\mu$ be the representation measure on $[0,\infty)$ for $K\in\CM$ as in Theorem~\ref{thm:completely-monotone:Bernstein}. Let $p(z)$ and $q_1(z)$ be the complex--valued functions defined on $\cbb^*\setminus\{0\}$ and given by
\begin{equation} \label{form:p.q_1}
p(z)=\int_0^\infty\close \frac{1}{z-x}\mu(\d x)\quad\text{and}\quad q_1(z)=\lambda-mz-\beta p(z).
\end{equation}
\begin{itemize}
\item [(a)] Then, the function $p(z)$ is analytic on $\cbb^*\setminus\{0\}$. Moreover, for $z\in \cbb^*\setminus\{0\}$, it satisfies
\begin{equation} \label{lim:p/z=lim.zp=0}
\lim_{|z|\to\infty}|p(z)/z|=0 = \lim_{|z|\to 0}|z \cdot p(z)|,
\end{equation}
and, for $|z| \le 1$ in $\bbC^*\sm$
\begin{equation} \label{ineq:lambda+betap(z)>lambda}
|\lambda-\beta p(z)|\ge \frac{\beta}{\sqrt{2}}\int_0^\infty\close \frac{1}{x+1}\mu(\d x).
\end{equation}
\item [(b)] The function $q_1(z)$ in \eqref{form:p.q_1} is analytic in $\cbb^*\setminus\{0\}$ and $q_1(z)$ does not admit any complex root in $\cbb^*\setminus\{0\}$.
\end{itemize}
\end{lemma}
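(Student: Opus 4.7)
The plan is to derive all of the claims about $p(z)$ directly from its integral representation and then obtain those for $q_1$ as easy consequences. First I would record the integrability consequences on $\mu$ of Assumption~\ref{cond:K:general} combined with the Bernstein representation $K(t) = \int_0^\infty e^{-xt}\mu(\d x)$: local integrability of $K$ near $0$ and $K(t)\to 0$ at infinity force $\mu(\{0\})=0$ (by Fatou), $\mu([0,1]) < \infty$, and $\int_1^\infty \mu(\d x)/x < \infty$, and hence $\int_0^\infty \mu(\d x)/(x+1) < \infty$. The elementary bound $|z - x|^2 \ge |z|^2 + x^2$, valid for $\Re(z)\le 0$ and $x\ge 0$ since then $-2x\Re(z)\ge 0$, then suffices to guarantee absolute convergence of $p(z)$ and $p'(z)$ on $\cbb^*\sm$; standard differentiation under the integral yields the analyticity of $p$ on $\cbb^*\sm$, and hence of $q_1$ in part~(b).

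The two limits in~\eqref{lim:p/z=lim.zp=0} reduce to dominated convergence after splitting the integrals at $x = |z|$. Concretely, the bound above gives
\[
|p(z)/z| \le \frac{\mu([0,|z|])}{|z|^2} + \frac{1}{|z|}\int_{|z|}^\infty \frac{\mu(\d x)}{x},
\]
where the second term vanishes as $|z|\to\infty$ by finiteness of $\int_1^\infty \mu(\d x)/x$, while that same finiteness forces $\mu([0,R]) = O(R)$ (since $\mu([1,R]) \le R\int_1^R \mu(\d x)/x$), so the first term vanishes too. For $|z|\to 0$, the integrand $|z|/\sqrt{|z|^2+x^2}$ converges to $0$ pointwise on $(0,\infty)$ and, for $|z|\le 1$, is dominated by $1$ on $[0,1]$ and by $1/x$ on $[1,\infty)$, so dominated convergence applies.

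The heart of the lemma is the lower bound~\eqref{ineq:lambda+betap(z)>lambda}. Writing $-p(z) = \int_0^\infty (x - z)^{-1}\,\mu(\d x) = U + \imag V$, the fact that $z\in\cbb^*$ and $x\ge 0$ place each integrand $(x-z)^{-1}$ in the closed right half plane yields $U \ge 0$, so
\[
|\lambda - \beta p(z)|^2 = (\lambda + \beta U)^2 + \beta^2 V^2 \ge \beta^2(U^2 + V^2) \ge \tfrac{\beta^2}{2}(U + |V|)^2,
\]
where the last step uses $(U - |V|)^2 \ge 0$. Parametrizing $z = -a + \imag b$ with $a\ge 0$ and $a^2 + b^2 \le 1$, one then has
\[
U + |V| = \int_0^\infty \frac{x + a + |b|}{(x+a)^2 + b^2}\,\mu(\d x),
\]
so the target reduces to the pointwise inequality $(x+a+|b|)/((x+a)^2 + b^2) \ge 1/(x+1)$ for all $x \ge 0$. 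Clearing denominators rewrites this as $x(1 - a + |b|) + a(1 - a) + |b|(1 - |b|) \ge 0$, which is manifest for $a, |b| \in [0,1]$. This elementary but slightly nontrivial pointwise inequality is the main obstacle of the argument; once it is established, integrating against $\mu$ gives~\eqref{ineq:lambda+betap(z)>lambda}.

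Part~(b) is now immediate. To rule out zeros of $q_1$ on $\cbb^*\sm$, take real parts: for $z \in \cbb^*\sm$,
\[
\Re(q_1(z)) = \lambda - m\Re(z) + \beta\int_0^\infty \frac{x - \Re(z)}{|z-x|^2}\,\mu(\d x) \ge \lambda > 0,
\]
since $\Re(z)\le 0$ makes each of the two correction terms nonnegative. Hence $q_1$ cannot vanish on $\cbb^*\sm$.
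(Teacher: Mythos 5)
Your proposal is correct and follows essentially the same route as the paper: the same pointwise inequality $(x+a+|b|)/((x+a)^2+b^2)\ge 1/(x+1)$ combined with $U^2+V^2\ge\tfrac12(U+|V|)^2$ for the lower bound, the same dominated-convergence arguments for the two limits (using $\mu([0,1])<\infty$, $\int_1^\infty x^{-1}\mu(\d x)<\infty$ and $\mu(\{0\})=0$), and the same real-part argument for part (b). The only cosmetic difference is that you establish analyticity by differentiating under the integral sign with the bound $|z-x|^2\ge|z|^2+x^2$, whereas the paper expands $p$ in a power series on a disk $B(z_0,|z_0|/2)$ around each $z_0$; at points $z_0$ on the imaginary axis your bound only controls the closed half-plane, so one should add the observation that $|z-x|\ge|\Im(z)|$ (and $|z-x|\ge x-\Re(z)$ for large $x$) to get local uniform convergence on a full complex neighborhood.
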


In Lemma \ref{lem:ptilde.qtilde_1}, covering the case where $K=\f(t^2)$, $\f\in\CM$, the analysis involves the special class of \emph{error functions}. For the reader's convenience, we briefly recapitulate some related notions.

Recall that the so--named \textit{complementary error function} is given by
\begin{equation}\label{e:erfc}
\erfc(z) := \frac{2}{\sqrt{\pi}} \int^{\infty}_{z}\close e^{-t^2}dt = 1 - \erf(z), \quad z \in \bbC,
\end{equation}
where the \textit{error function} admits the MacLaurin series representation
\begin{equation} \label{e:erf}
\erf(z) := \frac{2}{\sqrt{\pi}} \int^{z}_0 e^{-t^2}dt = \frac{2}{\sqrt{\pi}} \sum^{\infty}_{n=0} \frac{(-1)^n z^{2n+1}}{n! (2n+1)}, \quad z \in \bbC.
\end{equation}
In particular, both $\erf$ and $\erfc$ are entire functions. Now consider the function
\begin{equation} \label{form:w(z)}
w(z) = e^{-z^2} \erfc(-\imag z), \quad z \in \bbC,
\end{equation}
also called \textit{Faddeeva function} or \textit{plasma dispersion function}. The function $w(z)$ also admits the Hilbert transform representation \cite[expression (8)]{fettis:caslin:cramer:1973}
\begin{equation}\label{e:w(z)_Hilbert}
w(z) = \frac{\imag}{\pi} \int_{\bbR} \frac{e^{-t^2}}{z - t} dt, \quad \Im(z) > 0.
\end{equation}
When $z = x \in \bbR$, \eqref{e:w(z)_Hilbert} should be modified to
\begin{equation}\label{e:w(x)}
w(x) = e^{-x^2} + \frac{2 \imag}{\sqrt{\pi}} \hspace{0.5mm}\textnormal{daw}(x),
\end{equation}
where the so--named \textit{Dawson integral} is given by \cite[pp.\ 1497--1498]{weideman:1994}
\begin{equation} \label{form:dawson(z)}
\textnormal{daw}(z) = e^{-z^2} \int^{z}_0 e^{t^2}dt, \quad z \in \bbC.
\end{equation}
Having introduced these special functions, we are now in a position to state Lemma \ref{lem:ptilde.qtilde_1}, which is employed in showing that $f_1$ as in~\eqref{form:f_1(z)} is analytic in $\cbb^-\sm$ when $K=\f(t^2)$, $\f\in\CM$.
\begin{lemma} \label{lem:ptilde.qtilde_1}
Suppose $K(t)=\f(t^2)$ where $\f\in\CM$. Let $\mu$ be the representation measure on $[0,\infty)$ for $\f$ as in Theorem~\ref{thm:completely-monotone:Bernstein}. Let $\ptil(z)$ and $\qtil_1(z)$ be the complex--valued functions defined on $\cbb^-\setminus\{0\}$ and given by
\begin{equation} \label{form:ptilde.qtilde_1}
\ptil(z)=\frac{\sqrt{\pi}}{2}\int_0^\infty\close \frac{1}{\sqrt{x}}w\Big(\!-\frac{z}{2\sqrt{x}}\Big)\mu(\d x)\quad\text{and}\quad \qtil_1(z)=\lambda+\imag mz+\beta \ptil(z).
\end{equation}
\begin{itemize}
\item [(a)] Then, the function $\ptil(z)$ is analytic on $\cbb^-\setminus\{0\}$. Moreover, for $z\in \cbb^-\setminus\{0\}$, it satisfies
\begin{equation} \label{lim:ptilde/z=lim.zptilde=0}
\lim_{|z|\to\infty}|\ptil(z)/z|=0 = \lim_{|z|\to 0}|z \cdot \ptil(z)|,
\end{equation}
and, for $|z| \le 1$ in $\cbb^-\sm$
\begin{equation} \label{ineq:lambda+beta.ptilde(z)>0}
|\lambda+\beta \ptil(z)|\ge \beta\frac{\sqrt{\pi}}{4}\int_0^\infty\close\frac{1}{\sqrt{x}}e^{-\frac{1}{2x}}\mu(\d x).
\end{equation}
\item [(b)] The function $\qtil_1(z)$ in \eqref{form:ptilde.qtilde_1} is analytic in $\cbb^-\setminus\{0\}$ and $\qtil_1(z)$ does not admit any complex root in $\cbb^-\setminus\{0\}$.
\end{itemize}
\end{lemma}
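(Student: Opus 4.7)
The argument follows the blueprint of Lemma~\ref{lem:p(x).q_1(x)}, with the Faddeeva function $w$ replacing the Cauchy kernel $1/(z-x)$. The key structural observation is that for $z\in\cbb^-\sm$ one has $\Im(-z/(2\sqrt{x}))\ge 0$ for every $x>0$, placing the argument of $w$ in $\cbb^+\cup\bbR$, where $w$ admits the Hilbert representation~\eqref{e:w(z)_Hilbert} (for $\Im>0$), the explicit form~\eqref{e:w(x)} on $\bbR$, a uniform bound $|w(\zeta)|\le C/(1+|\zeta|)$, and the asymptotic $w(\zeta)\sim\imag/(\sqrt{\pi}\,\zeta)$ as $|\zeta|\to\infty$.

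For part (a), analyticity of $\ptil$ on $\cbb^-\sm$ follows by Morera's theorem combined with Fubini: each integrand $z\mapsto w(-z/(2\sqrt{x}))$ is entire, and the pointwise estimate $|w(-z/(2\sqrt{x}))|/\sqrt{x}\le C\min(1/\sqrt{x},\,2/|z|)$ furnishes a dominating function whose $\mu$-integrability on compacta of $\cbb^-\sm$ is inherited from the same conditions that render $\Kcos$ and $\Ksin$ well defined in Lemma~\ref{lem:complete-monotone:Kcos-Ksin:K(t)=phi(t^2)}. The limits in~\eqref{lim:ptilde/z=lim.zptilde=0} are handled by splitting $\int_0^\infty\!\cdot\,\mu(\d x)$ at $x=|z|^2$: on $\{x\le|z|^2\}$ one uses $|w(\zeta)|\le C/|\zeta|$, bounding the contribution by $O(1/|z|)$ per unit $\mu$-mass, while on $\{x\ge|z|^2\}$ one uses $|w(\zeta)|\le C$; dominated convergence combined with $w(0)=1$ and the decay at infinity then yields both statements.

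The quantitative bound~\eqref{ineq:lambda+beta.ptilde(z)>0} is the technical heart. The Hilbert representation gives
\begin{equation*}
\Re w(\alpha+\imag\beta)=\frac{\beta}{\pi}\int_\bbR \frac{e^{-t^2}}{(\alpha-t)^2+\beta^2}\,\d t\ge 0,\qquad \beta\ge 0,
\end{equation*}
(with $\Re w(\alpha)=e^{-\alpha^2}$ on $\bbR$), hence $\Re\ptil(z)\ge 0$ on $\cbb^-\sm$, and, arguing as in Lemma~\ref{lem:p(x).q_1(x)}(a), $|\lambda+\beta\ptil(z)|\ge\beta\,\Re\ptil(z)$. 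The quantitative estimate is obtained by restricting the Poisson integral above to $|t-\alpha|\le \beta$, using $e^{-t^2}\ge e^{-(|\alpha|+\beta)^2}$ there together with $\int_{-\beta}^{\beta}(\beta/\pi)/(u^2+\beta^2)\,\d u=\frac{1}{2}$, which produces
\begin{equation*}
\Re w(\zeta)\ge \tfrac{1}{2}\,e^{-(|\alpha|+\beta)^2}\ge \tfrac{1}{2}\,e^{-2|\zeta|^2},\qquad \zeta=\alpha+\imag\beta\in\cbb^+\cup\bbR.
\end{equation*}
Substituting $\zeta=-z/(2\sqrt{x})$ with $|z|\le 1$ (so $2|\zeta|^2\le 1/(2x)$) gives $\Re w(-z/(2\sqrt{x}))\ge\frac{1}{2}e^{-1/(2x)}$; integrating against $(\sqrt{\pi}/(2\sqrt{x}))\,\mu(\d x)$ produces the displayed bound.

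Part (b) is then short. Analyticity of $\qtil_1$ transfers directly from $\ptil$. For non-vanishing, a root $\qtil_1(z_0)=0$ at some $z_0\in\cbb^-\sm$ would force $0=\Re\qtil_1(z_0)=\lambda-m\Im(z_0)+\beta\,\Re\ptil(z_0)$, but each summand is nonnegative ($\lambda>0$, $-\Im(z_0)\ge 0$ on $\cbb^-$, $\Re\ptil(z_0)\ge 0$ by the above) and $\lambda>0$ makes the sum strictly positive, a contradiction. The main obstacle throughout is the Poisson-integral lower bound for $\Re w$ displayed above; once that is in hand, the remaining steps run parallel to the $\CM$ case already treated in Lemma~\ref{lem:p(x).q_1(x)}.
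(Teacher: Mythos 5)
Your plan is sound and reaches the same conclusion the paper does, but the route through the lower bound on $\Re\,\ptil$ is organized more cleanly than in the paper. The Poisson--kernel estimate $\Re\,w(\zeta)\ge\frac{1}{2}e^{-2|\zeta|^2}$ for $\zeta\in\cbb^+\cup\bbR$ is a genuinely nicer way to package the computation: the paper proceeds instead by writing $\Re(\lambda+\beta\ptil(z))$ with $z=u+\imag v$, changing variables $s=\tfrac{2\sqrt{x}}{|v|}\big(\tfrac{u}{2\sqrt{x}}+t\big)$ inside the double integral, restricting to $s\in[-1,1]$, and invoking $(s|v|-u)^2\le 2(u^2+v^2)\le 2$; your version extracts the same numerical constant $\tfrac{\sqrt{\pi}}{4}$ but isolates the estimate on $\Re\,w$ as a self-contained inequality that immediately covers both the $\Im z<0$ and $z\in\rbb\sm$ cases. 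Part (b) and the nonnegativity argument for $\Re\,\qtil_1$ are essentially the paper's.

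There is, however, a genuine (though repairable) gap in your treatment of analyticity and the decay of $w$. You invoke the uniform bound $|w(\zeta)|\le C/(1+|\zeta|)$ on $\cbb^+\cup\bbR$ without establishing it, and—more importantly—this bound is insufficient on its own for the Morera/Fubini argument on compacta of $\cbb^-\sm$. When $z_0$ lies on the negative or positive real axis, any disk or triangle in a Morera (or Cauchy) argument around $z_0$ protrudes into $\cbb^+$, whence $\zeta=-z/(2\sqrt{x})$ dips below the real axis into $\cbb^-$, where your stated bound says nothing. This is exactly why the paper proves Lemma~\ref{lem:w(z)}, which gives $|w(z)|\le C/|z|$ for sufficiently large $|z|$ on the wider angular sector $\{re^{\imag\theta}:-\pi/8<\theta<9\pi/8\}$, and why the analyticity argument in the paper selects a disk $B(z_0,\varepsilon)$ contained inside that sector before applying Cauchy's integral formula and a power series with $\mu$-integrable coefficients. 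Your proof needs an analogous extension of the decay estimate slightly below $\bbR$, or an explicit reduction of ``analytic on $\cbb^-\sm$'' to ``analytic on the open lower half-plane and continuous up to the boundary'' together with a justification that this suffices for the contour integral downstream. The two-piece splits at $x=|z|^2$ you propose for the limits in \eqref{lim:ptilde/z=lim.zptilde=0} do succeed once the decay bound for $w$ is in hand (the key inputs being \eqref{ineq:int_1^infty 1/sqrt(x).mu(dx)<infty}, $\mu([0,1])<\infty$, and $\mu(\{0\})=0$), though the phrase ``dominated convergence combined with $w(0)=1$'' underdescribes the bookkeeping; the paper's three- and four-piece splits make the same points more explicit.
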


After stating Lemmas~\ref{lem:equiparition:1D:gamma=0}--\ref{lem:ptilde.qtilde_1}, we provide the proof of Theorem~\ref{thm:equipartition-of-energy:1D:gamma=0:power-law.exponential}.
\begin{proof}[Proof of Theorem~\ref{thm:equipartition-of-energy:1D:gamma=0:power-law.exponential}]

We first consider the case where $K\in\CM$. The proof is based on verifying the assumptions of Lemma~\ref{lem:equiparition:1D:gamma=0}, while making use of Lemma~\ref{lem:p(x).q_1(x)}.

With regard to condition~\eqref{cond:lim.Kcos-iKsin/z=0:C^-}, note that, by virtue of formula~\eqref{form:complete-monotone:Kcos-Ksin} extended to $\cbb^-\setminus\{0\}$,
\begin{align*}
\Kcos(z)-\imag \Ksin(z)&=\int_0^\infty\close \frac{x-\imag z}{x^2+z^2}\mu(\d x)=-\int_0^\infty\close\frac{1}{-\imag z-x}\mu(\d x)=-p(-\imag z),
\end{align*}
where $p(x)$ is as in~\eqref{form:p.q_1}. Also, since $z\in\cbb^-\setminus\{0\}$, it is clear that $-\imag z\in\cbb^*\setminus\{0\}$. In light of Lemma~\ref{lem:p(x).q_1(x)}, (a), cf.~\eqref{lim:p/z=lim.zp=0}, we conclude that \eqref{cond:lim.Kcos-iKsin/z=0:C^-} holds.

Turning to the limit~\eqref{cond:lim:zf_1(z)=0:C^-}, first recall that $q_1(z)$ is given by~\eqref{form:p.q_1}. Next, let $f_1(z)$ be as in~\eqref{form:f_1(z)}. Recast
\begin{align}\label{e:f_1(z)=1/q_1(-iz)}
f_1(z)&=\frac{1}{\lambda +\beta \int_0^\infty\frac{1}{\imag z+x}\mu(\d x) +\imag mz}=\frac{1}{\lambda-\beta p(-\imag z)-m(-\imag z)}=\frac{1}{q_1(-\imag z)}.
\end{align}
Then, for $z\in\cbb^-\sm$ such that $|z|$ is small enough, relation \eqref{ineq:lambda+betap(z)>lambda} in Lemma~\ref{lem:p(x).q_1(x)}, (a), implies that
\begin{align}\label{e:f_1(z)=<o(1)}
|zf_1(z)|\le \frac{|z|}{|\lambda-\beta p(-\imag z)|-m|z|}\le \frac{|z|}{\frac{\beta}{\sqrt{2}}\int_0^\infty \frac{1}{x+1}\mu(\d x)-m|z|}.
\end{align}
Since the upper bound in \eqref{e:f_1(z)=<o(1)} converges to zero as $|z|\to 0$ in $\cbb^-\sm$, then condition \eqref{cond:lim:zf_1(z)=0:C^-} holds.

However, by Lemma~\ref{lem:p(x).q_1(x)}, (b), $q_1(-\imag z)$ is analytic as a function of $z \in \cbb^-\setminus\{0\}$ and, also, does not admit any complex root in the same domain. Therefore, by expression \eqref{e:f_1(z)=1/q_1(-iz)}, $f_1(z)$ is analytic in $\cbb^-\setminus\{0\}$. In particular, condition \eqref{cond:int_C(R) f_1(z)=0} holds. Consequently, by Lemma~\ref{lem:equiparition:1D:gamma=0}, relation \eqref{e:E[m,v(0)^2]=k_BT} is established.

We now turn to the case $K(t)=\f(t^2)$ where, $\f\in\CM$. Similarly to the previous case, we need to verify the assumptions of Lemma~\ref{lem:equiparition:1D:gamma=0} while making use of the auxiliary results in Lemma~\ref{lem:ptilde.qtilde_1}.
 Concerning limit~\eqref{cond:lim.Kcos-iKsin/z=0:C^-}, in view of expressions~\eqref{form:complete-monotone:Kcos-Ksin:K(t)=phi(t^2)} and~\eqref{form:ptilde.qtilde_1}, we immediately obtain for $z\in \bbC^-\sm$
 \begin{align*}
 \frac{|\Kcos(z)-\imag \Ksin(z)|}{|z|}=\frac{|\ptil(z)|}{|z|}\to 0,\quad |z|\to\infty,
 \end{align*}
 by virtue of Lemma~\ref{lem:ptilde.qtilde_1}, (a), cf.~\eqref{lim:ptilde/z=lim.zptilde=0}.

Considering the limit~\eqref{cond:lim:zf_1(z)=0:C^-}, first recall that $\ptil(z)$ and $\qtil_1(z)$ are given by~\eqref{form:ptilde.qtilde_1}. Next, observe that $f_1(z)$ as in~\eqref{form:f_1(z)} can be rewritten as
\begin{align*}
f_1(z)=\frac{1}{\qtil_1(z)}=\frac{1}{\lambda+\beta\ptil(z)+\imag mz}.
\end{align*}
In light of the estimate~\eqref{ineq:lambda+beta.ptilde(z)>0}, for $z \in \bbC^-\sm$ and sufficiently small $|z|$, we obtain
\begin{align*}
|zf_1(z)|\le \frac{|z|}{|\lambda+\beta\ptil(z)|-m|z|}\le  \frac{|z|}{\beta\frac{\sqrt{\pi}}{4}\int_0^\infty\frac{1}{\sqrt{x}}e^{-\frac{1}{2x}}\mu(\d x)-m|z|} \rightarrow 0, \quad |z| \rightarrow 0.
\end{align*}
This establishes ~\eqref{cond:lim:zf_1(z)=0:C^-}.

In regard to condition~\eqref{cond:int_C(R) f_1(z)=0}, fix $R > 0$. By virtue of Lemma~\ref{lem:ptilde.qtilde_1}, (b), since $\qtil_1(z)$ is analytic and does not admit any root in $\bbC^-\sm$, then there exists a simply connected region $D \supseteq C(R)$ such that $\qtil_1(z) \neq 0$, for all $z \in D$. Hence, $f_1(z)=1/\qtil_1(z)$ is analytic on $D$, implying that condition \eqref{cond:int_C(R) f_1(z)=0} holds.

As a consequence, by Lemma~\ref{lem:equiparition:1D:gamma=0}, relation~\eqref{e:E[m,v(0)^2]=k_BT} is established. This concludes the proof.
\end{proof}

We now turn to the proofs of the auxiliary results. First, we provide the proof of Lemma~\ref{lem:equiparition:1D:gamma=0}.
\begin{proof}[Proof of Lemma~\ref{lem:equiparition:1D:gamma=0}]
Recall that the spectral density $r_{22}$ for $v(t)$ is given by~\eqref{form:spectral-density:v} and~\eqref{form:spectral-density:x:Kcos-Ksin}. Together with the covariance function~\eqref{form:stationary-distribution:E[<G,phi1><G,phi2>]} and based on the condition that $\gamma=0$, we have
 \begin{align} \label{form:E[v(0)^2]}
\E[v(0)^2]&=\frac{k_BT}{\pi}\int_0^\infty\close r_{22}(\omega)\d\omega=\frac{k_BT}{\pi}\int_0^\infty\close \frac{2\big(\lambda+\beta\Kcos(\omega)\big)}{|m\omega-\beta \Ksin(\omega)|^2+|\lambda+\beta\Kcos(\omega))|^2}\d\omega.
\end{align}
It therefore suffices to prove that $\int_0^\infty r_{22}(\omega)\d\omega=\pi m^{-1}$.

Now, consider the contour integral of the function $f_1$ as in \eqref{form:f_1(z)} on $C(R)$. Note that we may decompose
\begin{align*}
\int_{C(R)}f_1(z)\d z&=\Big\{\int_{-R}^{-1/R}\close+\int_{C^-_{1/R}}\close+\int_{1/R}^{R}+\int_{C_R^-}\Big\}f_1(z)\d z\\
&=I_1 (R) +I_2(R) +I_3(R) +I_4(R)  ,
\end{align*}
where we recall that $C^-_R$ and $C^-_{1/R}$, respectively, are the outer and inner half circles in $\cbb^-$ as in~\eqref{form:curve:C_R^-.and.C_(1/R)^-}. Using the variable $\omega$ for integration along the real axis, it is straightforward to see that
$$
I_3(R) =\int_{1/R}^R\frac{\d\omega}{\lambda +\beta \Kcos(\omega) + \imag (m\omega-\beta \Ksin(\omega))}.
$$
Concerning $I_1(R)$, note that $\Kcos(\omega)$ and $\Ksin(\omega)$ are even and odd functions, respectively. Thus, by a change of variable $z:=-\omega$, we obtain
\begin{align*}
I_1(R) &=\int_{R}^{1/R}\close \frac{-\d \omega}{\lambda +\beta \Kcos(-\omega) + \imag (m(-\omega)-\beta\Ksin(-\omega))}\\
&=\int_{1/R}^{R} \frac{\d \omega}{\lambda +\beta \Kcos(\omega) - \imag (m\omega-\beta \Ksin(\omega))}.
\end{align*}
It follows immediately that
$$
I_1(R) +I_3(R) =\int_{1/R}^R\frac{2\big(\lambda+\beta\Kcos(\omega)\big)}{|m\omega-\beta \Ksin(\omega)|^2+|\lambda+\beta\Kcos(\omega))|^2}\d\omega.
$$
By the Monotone Convergence Theorem, we obtain
\begin{equation}\label{e:I_1(R)+I_3(R)->int_r22}
I_1(R) +I_3(R)\to \int_0^\infty\close r_{22}(\omega)\d\omega,\quad\text{as }R\to\infty.
\end{equation}
Concerning $I_2(R)$, by making the change of variable
\begin{equation}\label{e:z=R^(-1)e^(i*theta)}
z:=R^{-1}e^{\imag \theta},
\end{equation}
we can write
\begin{align*}
I_2(R)&=\int_{-\pi}^0\frac{R^{-1}e^{\imag \theta}\imag \d\theta}{\lambda +\beta \big[\Kcos(R^{-1} e^{\imag \theta})-\imag  \Ksin(R^{-1} e^{\imag \theta})\big] + \imag mR^{-1} e^{\imag \theta}}\\
&=\int_{-\pi}^0R^{-1}e^{\imag \theta}f_1(R^{-1}e^{\imag \theta})\imag \d\theta.
\end{align*}
Then, by virtue of the Dominated Convergence Theorem together with~\eqref{cond:lim:zf_1(z)=0:C^-},
\begin{equation}\label{e:lim_I_2(R)=0}
\lim_{R \rightarrow \infty}I_2(R) = 0.
\end{equation}
Likewise, with regards to $I_4(R)$, by the change of variable $z:=Re^{\imag \theta}$,
\begin{align}\label{e:I4(R)}
I_4(R)=\int_0^{-\pi}\close\frac{\imag \d\theta}{\lambda R^{-1}e^{-\imag \theta} +\beta R^{-1}e^{-\imag \theta}  \big[\Kcos(Re^{\imag \theta}) - \imag  \Ksin(Re^{\imag \theta})\big]+ \imag m  }.
\end{align}
In view of condition~\eqref{cond:lim.Kcos-iKsin/z=0:C^-}, the integrand in \eqref{e:I4(R)} converges to $m^{-1}$ as $R\to\infty$. By the Dominated Convergence Theorem, we further obtain
\begin{equation}\label{e:I_4(R)->-pi/m}
I_4(R) \to -\pi m^{-1}, \quad\text{as }R\to\infty.
\end{equation}
Together with the limits \eqref{e:lim_I_2(R)=0}, \eqref{e:I_1(R)+I_3(R)->int_r22} and \eqref{e:I_4(R)->-pi/m}, expression~\eqref{cond:int_C(R) f_1(z)=0} yields the limit
\begin{equation} \label{eq:int_C(R)f(z)dz=int_0^infty.r_v+pi.m^(-1)}
0=\lim_{R\to\infty}\int_{C(R)}\close f(z)\d z= \int_0^\infty \close r_{22}(\omega)\d\omega-\pi m^{-1}.
\end{equation}
Hence,
$$
\int_0^\infty \close r_{22}(\omega)\d\omega=\pi m^{-1}.
$$
This establishes \eqref{e:E[m,v(0)^2]=k_BT_lemma}.
\end{proof}

Next, we give the the proof of Lemma~\ref{lem:p(x).q_1(x)}.
\begin{proof}[Proof of Lemma~\ref{lem:p(x).q_1(x)}]
(a) Firstly, with regards to analyticity, letting $z_0\in \cbb^*\setminus\{0\}$, it suffices to show that $p(z)$ as in~\eqref{form:p.q_1} can be expanded for all $z\in B(z_0,|z_0|/2)$. To see this, we compute
\begin{align}\label{e:Stieltjes_transf_series}
\int_0^\infty\close \frac{1}{z-x}\mu(\d x)&=\int_0^\infty\close \frac{1}{(z_0-x)\big(\frac{z-z_0}{z_0-x}+1\big)}\mu(\d x)=\sum_{n\ge 0}\int_0^\infty\close \frac{(-1)^n}{(z_0-x)^{n+1}}\mu(\d x)\cdot (z-z_0)^n,
\end{align}
where the second equality is obtained by interchanging integration and summation. To justify this interchange, we claim that the series in \eqref{e:Stieltjes_transf_series} converges absolutely for all $z\in B(z_0,|z_0|/2)$. Indeed, by writing $z_0=-u+\imag v\in \cbb^*\sm$, with $u \geq 0$, for all $x\ge 0$ we can bound
\begin{equation}\label{ineq:|z-x|>max(|z|,x)}
|z_0 - x|=|-(x+u)+\imag v|\ge \max\{|z_0|,x\}.
\end{equation}
Thus, we can estimate
\begin{align*}
\Big| \int_0^\infty\close \frac{(-1)^n}{(z_0-x)^{n+1}}\mu(\d x)\cdot(z-z_0)^n \Big|&\le \int_0^\infty\close \frac{1}{|-(x+u)+\imag v|^{n+1}}\mu(\d x)\frac{|z_0|^n}{2^n}\\
&= \Big\{\int_0^1+\int_1^\infty\Big\}\frac{1}{|-(x+u)+\imag v|^{n+1}}\mu(\d x)\frac{|z_0|^n}{2^n}\\
&\le \frac{1}{2^n |z_0|}\int_0^1\mu(\d x)+\frac{1}{2^n}\int_1^\infty \frac{1}{x}\mu(\d x).
\end{align*}
This implies that
\begin{align*}
\sum_{n\ge 0}\Big|\int_0^\infty\close \frac{(-1)^n}{(z_0-x)^{n+1}}\mu(\d x)\cdot (z-z_0)^n\Big|\le \frac{1}{|z_0|}\mu([0,1])+\int_1^\infty\frac{1}{x}\mu(\d x)<\infty,
\end{align*}
where the last implication follows from the fact that $\mu([0,1])$ and $\int_1^\infty\frac{1}{x}\mu(\d x)$ are both finite by virtue of~\eqref{ineq:mu(0,1)<infty}-\eqref{ineq:int_1^infty 1/x.mu(dx)<infty}. This establishes the analyticity of $p(z)$.

Next, we turn to~\eqref{lim:p/z=lim.zp=0}. On one hand, by~\eqref{ineq:|z-x|>max(|z|,x)},
\begin{align*}
|p(z)|\le \frac{1}{|z|}\int_0^1\mu(\d x)+\int_1^\infty \frac{1}{x}\mu(\d x),
\end{align*}
implying that $\lim_{|z|\to\infty}|p(z)/z|=0$. On the other hand, the bound ~\eqref{ineq:|z-x|>max(|z|,x)} (with $z \in \bbC^* \backslash\{0\}$ in place of $z_0$) implies that
\begin{align}\label{e:int_|z|/|z-x|mu(dx)_bound}
\int_0^\infty \close\frac{|z|}{|z-x|}\mu(\d x)\le \int_0^1 \frac{|z|}{|z-x|}\mu(\d x)+ \int_1^\infty \frac{|z|}{x}\mu(\d x).
\end{align}
Since $\int_1^\infty x^{-1}\mu(\d x)<\infty$, cf.~\eqref{ineq:int_1^infty 1/x.mu(dx)<infty}, the second term on the right-hand side of \eqref{e:int_|z|/|z-x|mu(dx)_bound} converges to zero as $|z| \rightarrow 0$. Also, by virtue of~\eqref{ineq:|z-x|>max(|z|,x)}, the integrand in the first term on the right-hand side of \eqref{e:int_|z|/|z-x|mu(dx)_bound} is bounded uniformly in $x$. By the Dominated Convergence Theorem, this implies that its limit is also zero. Therefore, $\lim_{|z|\rightarrow 0}|z \cdot p(z)| = 0$, as claimed. This establishes \eqref{lim:p/z=lim.zp=0}.

In regard to~\eqref{ineq:lambda+betap(z)>lambda}, let $z=-u+\imag v\in\cbb^*\setminus\{0\}$, $u\ge 0$, such that $|z|\le 1$. Then, for any $x,u,|v|\ge 0$, 
we claim that
\begin{align*}
\frac{x+u+|v|}{(x+u)^2+v^2}\ge \frac{1}{x+1}.
\end{align*}
Indeed, by multiplying through the denominators, the above inequality is equivalent to
\begin{align*}
x^2+xu+x|v|+x+u+|v|\ge x^2+2xu+u^2+v^2,
\end{align*}
i.e.,
\begin{align} \label{ineq:xv+x+u+v>xu+u^2+v^2}
x|v|+x+u+|v|\ge xu+u^2+v^2.
\end{align}
However, inequality~\eqref{ineq:xv+x+u+v>xu+u^2+v^2} always holds, since $u,|v|\in[0,1]$ and $x\ge 0$.

Therefore, by the elementary inequality $\sqrt{2(a^2+b^2)} \geq |a| + |b|$,
\begin{align*}
|\lambda-\beta p(z)|&=\Big|\lambda+\beta\int_0^\infty\close \frac{u+x}{(u+x)^2+v^2}\mu(\d x)+\imag \beta\int_0^\infty\close \frac{v}{(u+x)^2+v^2}\mu(\d x)\Big|\\
&\ge \frac{1}{\sqrt{2}}\Big(\lambda+\beta\int_0^\infty\close \frac{u+x+|v|}{(u+x)^2+v^2}\mu(\d x)\Big)\\
&\ge \frac{\beta}{\sqrt{2}}\int_0^\infty\close \frac{1}{x+1}\mu(\d x).
\end{align*}
This establishes (a).\\

(b) Since $p(z)$ is analytic, then so is $q_1(z)$. To show that $q_1(z)$ does not admit any root in $\cbb^*\setminus\{0\}$, suppose, by means of contradiction, that, for some $z_0:= -u + \imag v \in \cbb^*\setminus\{0\}$, $q_1 (z_0) = 0$. In particular, $u\ge 0$, $v \in\rbb$. A simple calculation yields
\begin{align*}
0=\Re\big(q_1(z_0)\big)=\lambda+mu+\beta\int_0^\infty\close \frac{u+x}{(u+x)^2+v^2}\mu(\d x)>0,
\end{align*}
a contradiction. This shows (b).

\end{proof}

We finish this subsection by presenting the proof of Lemma~\ref{lem:ptilde.qtilde_1}.
\begin{proof}[Proof of Lemma~\ref{lem:ptilde.qtilde_1}]
(a) Similarly to the proof of Lemma~\ref{lem:p(x).q_1(x)}, fixing $z_0\in \bbC^-\sm$, we want to show $\ptil(z)$ can be expanded in a neighborhood of $z_0$. To see this, we first choose an open disk $B(z_0,\varepsilon)$ centered at $z_0$ with radius $\varepsilon$ such that
\begin{equation}\label{cond:B(z_0,varepsilon)}
\overline{B(z_0,\varepsilon)}\subset\Big\{R e^{\imag\theta}:-\frac{9\pi}{8}<\theta<\frac{\pi}{8},R>0\Big\}.
\end{equation}
For each $x>0$, let $w_x(z)=w\big(-\frac{z}{2\sqrt{x}}\big)$. Since $w(z)$ is entire, then so is $w_x(z)$. In light of Cauchy's integral formula, for all $z_1\in B(z_0,\varepsilon)$, we can write
\begin{align*}
w_x(z_1)&=\frac{1}{2\pi \imag}\sum_{n\ge 0}\int_{\partial B(z_0,\varepsilon)}\frac{1}{(z-z_0)^{n+1}}w_x(z)\d z\, \cdot (z_1-z_0)^n\\
&=\frac{1}{2\pi \imag}\sum_{n\ge 0}\int_{\partial B(z_0,\varepsilon)}\frac{1}{(z-z_0)^{n+1}}w\Big(\!-\frac{z}{2\sqrt{x}}\Big)\d z\, \cdot (z_1-z_0)^n.
\end{align*}
Recall that $\ptil(z)$ is given by~\eqref{form:ptilde.qtilde_1}. Then,
\begin{align}
\frac{2}{\sqrt{\pi}}\ptil(z_1)&=\int_0^\infty\close \frac{1}{\sqrt{x}}w_x(z_1)\mu(\d x)\nonumber \\
&=\frac{1}{2\pi \imag} \int_0^\infty\close \frac{1}{\sqrt{x}}\Big[\sum_{n\ge 0}\int_{\partial B(z_0,\varepsilon)}\frac{1}{(z-z_0)^{n+1}}w\Big(\!-\frac{z}{2\sqrt{x}}\Big)\d z\,\cdot (z_1-z_0)^n\Big]\mu(\d x) \nonumber\\
&=\frac{1}{2\pi \imag} \sum_{n\ge 0}\Big[ \int_0^\infty\close \frac{1}{\sqrt{x}}\int_{\partial B(z_0,\varepsilon)}\frac{1}{(z-z_0)^{n+1}}w\Big(\!-\frac{z}{2\sqrt{x}}\Big)\d z\,\mu(\d x)\Big]\cdot (z_1-z_0)^n \nonumber\\
&=\frac{1}{2\pi \imag} \sum_{n\ge 0}I_n\cdot(z_1-z_0)^n. \label{e:(1/2*pi*i)sum_In(z1-z0)^n}
\end{align}
In the third equality, we formally interchanged the order of integration with respect to $\mu(\d x)$ and the summation. To justify this step, it suffices to show that the series in \eqref{e:(1/2*pi*i)sum_In(z1-z0)^n} converges absolutely for all $z_1\in B(z_0,\varepsilon)$. In fact, considering $I_n$,
\begin{align}
|I_n| &\le c \int_0^\infty \close\frac{1}{\sqrt{x}}\sup_{z\in\partial B(z_0,\varepsilon)}\Big|w\Big(\!-\frac{z}{2\sqrt{x}}\Big)\Big|\mu(\d x)\cdot \frac{1}{\varepsilon^n} \nonumber \\
&=c \Big\{\int_0^1+\int_1^\infty\Big\} \frac{1}{\sqrt{x}}\sup_{z\in\partial B(z_0,\varepsilon)}\Big|w\Big(\!-\frac{z}{2\sqrt{x}}\Big)\Big|\mu(\d x) \cdot \frac{1}{\varepsilon^n}, \label{e:|In|=<sum_two_integrals}
\end{align}
where $c=c(z_0,\varepsilon)$ is a positive constant independent of $n$. We now bound the integrals with respect to $\mu$ on the right-hand side of \eqref{e:|In|=<sum_two_integrals}. On one hand, we invoke~\eqref{ineq:int_1^infty 1/sqrt(x).mu(dx)<infty} together with the fact that $w(z)$ is entire to bound
\begin{align}
\int_1^\infty\close \frac{1}{\sqrt{x}}\sup_{z\in\partial B(z_0,\varepsilon)}\Big|w\Big(\!-\frac{z}{2\sqrt{x}}\Big)\Big|\mu(\d x)\le \int_1^\infty\close\frac{1}{\sqrt{x}}\mu(\d x) \cdot \sup_{z\in B(0,|z_0|+\varepsilon)}|w(z)|<\infty. \label{e:int^{infty}_1_1/sqrt(x)_sup_mu(dx)}
\end{align}
On the other hand, by the choice of $B(z_0,\varepsilon)$ as in~\eqref{cond:B(z_0,varepsilon)}, for all $z\in B(z_0,\varepsilon)$ and $x\in(0,1)$, $$-\frac{z}{2\sqrt{x}}\in \Big\{R e^{\imag\theta}:-\frac{\pi}{8}<\theta<\frac{9\pi}{8}\Big\}. $$
In view of Lemma~\ref{lem:w(z)}, cf.~\eqref{ineq:w(z)}, namely, $|w(z)|\le c/|z|$, we have the bound
\begin{align}\label{e:int^1_0_1/sqrt(x)_sup_mu(dx)}
\int_0^1 \frac{1}{\sqrt{x}}\sup_{z\in\partial B(z_0,\varepsilon)}\Big|w\Big(\!-\frac{z}{2\sqrt{x}}\Big)\Big|\mu(\d x)\le c\mu([0,1]).
\end{align}
By \eqref{e:int^{infty}_1_1/sqrt(x)_sup_mu(dx)} and \eqref{e:int^1_0_1/sqrt(x)_sup_mu(dx)}, we conclude that there exists a constant $c=c(z_0,\varepsilon)>0$, independent of $n$, such that
\begin{equation}\label{e:|In|=<ce^(-n)}
|I_n|\le c\,\varepsilon^{-n}.
\end{equation}
Recall that $z_1\in B(z_0,\varepsilon)$. Relation \eqref{e:|In|=<ce^(-n)} implies that
$$
\sum_{n\ge 0}|I_n|\cdot|z_1-z_0|^n\le c\sum_{n\ge 0}\Big|\frac{z_1-z_0}{\varepsilon}\Big|^n<\infty.
$$
This establishes the analyticity of $\ptil(z_0)$ for all $z_0\in \bbC^-\sm$.

Next, we turn to the limits~\eqref{lim:ptilde/z=lim.zptilde=0}. To show that
\begin{equation}\label{e:lim|p-tilde(z)/z|}
\lim_{|z|\rightarrow \infty}|\ptil(z)/z| = 0,
\end{equation}
first recast
\begin{align*}
\frac{2}{\sqrt{\pi}}\cdot\frac{\ptil(z)}{z}=\frac{1}{z}\Big\{\int_0^1+\int_1^{|z|^2}\close+\int_{|z|^2}^\infty \Big\} \frac{1}{\sqrt{x}}w\Big(\!-\frac{z}{2\sqrt{x}}\Big)\mu(\d x)=I_1(z)+I_2(z)+I_3(z).
\end{align*}
Now, to bound $I_3$, consider $z\in \bbC^-\sm$ such that $|z|>1$. Then, we can invoke~\eqref{ineq:int_1^infty 1/sqrt(x).mu(dx)<infty} to conclude that
\begin{align}\label{e:|I3(z)|->0}
|I_3(z)|\le \frac{1}{|z|}\int_{1}^\infty\close \frac{1}{\sqrt{x}}\mu(\d x) \cdot \sup_{z_1\in B(0,1/2)}|w(z_1)| \rightarrow 0
\end{align}
as $|z|\to\infty$ in $\bbC^-\sm$. With regard to $I_2$, by combining~\eqref{ineq:int_1^infty 1/sqrt(x).mu(dx)<infty} with~\eqref{ineq:w(z)}, we obtain
\begin{align}\label{e:|I2(z)|->0}
|I_2(z)|\le \frac{c}{|z|^2}\int_1^{|z|^2}\close\mu(\d x)\le \frac{c}{|z|}\int_1^{|z|^2}\close\frac{1}{\sqrt{x}}\mu(\d x)\to0,\quad |z|\to\infty.
\end{align}
Likewise,
\begin{align}\label{e:|I1(z)|->0}
|I_1(z)|\le \frac{c}{|z|^2}\mu([0,1])\to 0,\quad |z|\to\infty.
\end{align}
Relation \eqref{e:lim|p-tilde(z)/z|} is now a consequence of \eqref{e:|I3(z)|->0}, \eqref{e:|I2(z)|->0} and \eqref{e:|I1(z)|->0}.

We now show that
\begin{equation}\label{e:|z.p-tilde(z)|->0}
\lim_{|z|\rightarrow 0} |z \cdot \widetilde{p}(z)| = 0.
\end{equation}
Similarly to the argument for \eqref{e:lim|p-tilde(z)/z|}, for $|z|<1$ we write
\begin{align*}
\frac{2}{\sqrt{\pi}}\,z\,\ptil(z)=z\Big\{\int_0^{|z|^2}\close+\int_{|z|^2}^{|z|}+\int_{|z|}^1+\int_1^\infty \Big\} \frac{1}{\sqrt{x}}w\Big(\!-\frac{z}{2\sqrt{x}}\Big)\mu(\d x)=I_4(z)+I_5(z)+I_6(z)+I_7(z).
\end{align*}
By~\eqref{ineq:int_1^infty 1/sqrt(x).mu(dx)<infty},
\begin{align}\label{e:|I7(z)|->0}
|I_7(z)|\le |z|\int_1^\infty\close\frac{1}{\sqrt{x}}\mu(\d x) \cdot \sup_{z_1\in B(0,1)}|w(z_1)|\to0,\quad |z|\to 0.
\end{align}
Likewise, again as $|z|\to 0$,
\begin{align}\label{e:|I6(z)|->0}
|I_6(z)|\le |z|\int_{|z|}^1\frac{1}{\sqrt{x}}\mu(\d x) \cdot \sup_{z_1\in B(0,1)}|w(z_1)|\le \sqrt{|z|}\,\mu([0,1])\sup_{z_1\in B(0,1)}|w(z_1)|\to0
\end{align}
and
\begin{align}\label{e:|I5(z)|->0}
|I_5(z)|\le |z|\int_{|z|^2}^{|z|}\frac{1}{\sqrt{x}}\mu(\d x)\cdot \sup_{z_1\in B(0,1)}|w(z_1)|\le \mu([0,|z|])\sup_{z_1\in B(0,1)}|w(z_1)|\to0.
\end{align}
In the last limit above, we employed the fact that $\lim_{|z|\to 0}\mu([0,|z|])=\mu(\{0\})=0$ as in~\eqref{eqn:mu(0)=0}. Moreover, by combining~\eqref{ineq:int_1^infty 1/sqrt(x).mu(dx)<infty} with~\eqref{ineq:w(z)}, we obtain
\begin{align}\label{e:|I4(z)|->0}
|I_4(z)|\le c\mu([0,|z|^2]),
\end{align}
which converges to $0$ as $|z|\to 0$. Then, relations \eqref{e:|I7(z)|->0}--\eqref{e:|I4(z)|->0} imply~\eqref{lim:ptilde/z=lim.zptilde=0}.

Next, we establish~\eqref{ineq:lambda+beta.ptilde(z)>0}. We break up the proof into two cases, depending on whether or not $z$ is real. We consider the first case $z=u+\imag v\in \bbC^-\sm$ where $v<0$ and $|z|^2=u^2+v^2 < 1$ . In view of expression~\eqref{e:w(z)_Hilbert} together with~\eqref{form:ptilde.qtilde_1}, we write
\begin{align*}
\lambda+\beta\ptil(z)&=\lambda-\beta\frac{\imag}{2\sqrt{\pi}}\int_0^\infty\close\frac{1}{\sqrt{x}}\int_{\rbb}\frac{e^{-t^2}}{ \frac{u}{2\sqrt{x}} +\imag \frac{v}{2\sqrt{x}}+t}\d t\,\mu(\d x).
\end{align*}
Then, in view of the fact that $1\big/ [(\frac{u}{2\sqrt{x}}+t) +\imag \frac{v}{2\sqrt{x}}] = [\frac{u}{2\sqrt{x}}+t - \imag \frac{v}{2 \sqrt{x}}]\big/ [(\frac{u}{2\sqrt{x}}+t)^2 + \frac{v^2}{4x}]$, we can bound
\begin{align*}
|\lambda+\beta\ptil(z)|&\ge\lambda+ \frac{\beta}{2\sqrt{\pi}}\int_0^\infty\close\frac{1}{\sqrt{x}} \Big\{ \int_{\rbb}\frac{e^{-t^2}}{ (\frac{u}{2\sqrt{x}}+t)^2 + \frac{v^2}{4x}}\d t \Big\} \frac{(-v)}{2\sqrt{x}}\mu(\d x)\\
&= \lambda + \frac{\beta}{2\sqrt{\pi}}\int_0^\infty\close\frac{1}{\sqrt{x}}\int_{\rbb}\frac{e^{-\frac{(t|v|-u  )^2}{4x}}}{ t^2 + 1}\d t\,\mu(\d x)\\
&\ge  \frac{\beta}{2\sqrt{\pi}}\int_0^\infty\close\frac{1}{\sqrt{x}}\int_{-1}^1\frac{e^{-\frac{(t|v|-u  )^2}{4x}}}{ t^2 + 1}\d t\,\mu(\d x).
\end{align*}
Now consider the elementary system of inequalities
\begin{align*}
(t|v|-u)^2\le 2(t^2v^2+u^2)\le 2,
\end{align*}
which holds for $|t|<1$ and $u^2+v^2<1$. Then,
\begin{align*}
|\lambda+\beta\ptil(z)|&\ge \frac{\beta}{2\sqrt{\pi}}\int_0^\infty\close\frac{1}{\sqrt{x}}e^{-\frac{1}{2x}}\mu(\d x) \cdot \int_{-1}^1\frac{1}{ t^2 + 1}\d t =\beta \frac{\sqrt{\pi}}{4}\int_0^\infty\close\frac{1}{\sqrt{x}}e^{-\frac{1}{2x}} \mu(\d x),
\end{align*}
where the equality follows from the elementary identity $\int_{-1}^1\frac{1}{t^2+1} dt =\frac{\pi}{2}$. This proves~\eqref{ineq:lambda+beta.ptilde(z)>0} for the case $\Im(z)<0$.

We now consider the second case where $z=u\in\rbb\sm$ and $|u|\le 1$. In view of expression~\eqref{e:w(x)} together with~\eqref{form:ptilde.qtilde_1}, we can write
\begin{align*}
\lambda+\beta \ptil(z)=\lambda+\beta\frac{\sqrt{\pi}}{2}\int_0^\infty\close\frac{1}{\sqrt{x}} \Big[e^{-\frac{u^2}{4x}}+\frac{2\imag}{\sqrt{\pi}}\daw\Big(\!-\frac{u}{2\sqrt{x}}\Big)\Big]\mu(\d x).
\end{align*}
Since $e^{-\frac{u^2}{4x}} \geq e^{-\frac{1}{2x}}$ for $|u|\leq 1$, this implies that
\begin{align*}
|\lambda+\beta \ptil(z)|\ge \beta\frac{\sqrt{\pi}}{2}\int_0^\infty\close\frac{1}{\sqrt{x}} e^{-\frac{u^2}{4x}}\mu(\d x)&\ge \beta\frac{\sqrt{\pi}}{4}\int_0^\infty\close\frac{1}{\sqrt{x}}e^{-\frac{1}{2x}}\mu(\d x).
\end{align*}
This proves~\eqref{ineq:lambda+beta.ptilde(z)>0}. Hence, part (a) is established.\\

\noindent (b) By part (a), $\qtil_1(z)$ is analytic. It remains to show that $\qtil_1(z)$ does not admit any root in $\bbC^-\sm$.

Similarly to the proof of~\eqref{ineq:lambda+beta.ptilde(z)>0}, we consider two cases depending on whether or not $z$ is real.

Let $z= u + \imag v$, $v < 0$, and consider $\qtil_1(z)$ as in \eqref{form:ptilde.qtilde_1}. Note that, by expression \eqref{e:w(z)_Hilbert},
$$
\qtil_1(z)=\lambda + \imag m z + \beta \ptil(z)=\lambda+\imag m(u+\imag v)-\beta\frac{\imag}{2\sqrt{\pi}}\int_0^\infty\close\frac{1}{\sqrt{x}}\int_{\rbb}\frac{e^{-t^2}}{ \frac{u}{2\sqrt{x}} +\imag \frac{v}{2\sqrt{x}}+t}\d t\,\mu(\d x).
$$
So,
$$
\Re(\qtil_1(z))= \lambda+ \frac{\beta}{2\sqrt{\pi}}\int_0^\infty\close\frac{1}{\sqrt{x}}\int_{\rbb}\frac{e^{-t^2}}{ (\frac{u}{2\sqrt{x}}+t)^2 + \frac{v^2}{4x}}\d t\frac{(-v)}{2\sqrt{x}}\mu(\d x)+ m (-v) > 0.
$$
Therefore, $\qtil_1(z)$ has no root in $\{z \in \bbC: \Im(z) < 0\}$.

Now fix $z = u \in \bbR\sm$ (i.e., $v = 0$). Then, expression \eqref{e:w(x)} implies that we can write
$$
\qtil_1(z) =
\lambda + \imag  m u +\beta \ptil(z)=\lambda+\beta\frac{\sqrt{\pi}}{2}\int_0^\infty\close\frac{1}{\sqrt{x}} \Big[e^{-\frac{u^2}{4x}}+\frac{2\imag}{\sqrt{\pi}}\daw\Big(\!-\frac{u}{2\sqrt{x}}\Big)\Big]\mu(\d x)+ \imag  m u .
$$
In particular,
$$
\Re(z)= \lambda +\beta\frac{\sqrt{\pi}}{2}\int_0^\infty\close\frac{1}{\sqrt{x}}e^{-\frac{u^2}{4x}}\mu(\d x)>0.
$$
It follows that $\qtil_1(z)$ has no root in $z\in\rbb\sm$. This concludes the proof.

\end{proof}

\subsubsection{\bf{Harmonically bounded case} ($\boldsymbol{\gamma>0}$)} \label{sec:equipartition:2D:gamma>0}

We now turn to the proof of Theorem~\ref{thm:equipartition-of-energy:2D:gamma>0:powerlaw.exponential}. Similarly to the previous subsection, let
\begin{equation} \label{form:f_2(z)}
f_2(z)=\frac{1}{z\big(\lambda z+\beta z\big[\Kcos(z)+\imag\Ksin(z)\big] + \imag (\gamma-mz^2)\big)}.
\end{equation}

\begin{remark} \label{rem:f_2:well-defined}
Similarly to Remark~\ref{rem:f_1:well-defined}, we note that in formula~\eqref{form:f_2(z)}, $\Kcos(z)+\imag\Ksin(z)$ is understood in the sense of~\eqref{form:complete-monotone:Kcos-Ksin} and \eqref{form:complete-monotone:Kcos-Ksin:K(t)=phi(t^2)} extended to $\cbb$. Furthermore, $\Kcos(z)+\imag\Ksin(z)$ is actually analytic on suitable subspaces of $\cbb$ (see the proof of Theorem~\ref{thm:equipartition-of-energy:2D:gamma>0:powerlaw.exponential}).
\end{remark}

For a large constant $R>0$, recall that $C_R^+ $ and $C_{1/R}^+ $ are, respectively, the outer and inner half circles as in~\eqref{form:curve:C_R.and.C_(1/R)}. Also consider the following closed curve $\Ctilde(R)\subset\cbb^+$ oriented counterclockwise
\begin{equation}\label{form:curve:Ctilde(R)}
\Ctilde(R)=[1/R,R]\cup C_R^+  \cup [-R,-1/R]\cup C_{1/R}^+ .
\end{equation}
Our approach is similar to that in the proof of Theorem~\ref{thm:equipartition-of-energy:1D:gamma=0:power-law.exponential}. We essentially need to show that $f_2$ as in \eqref{form:f_2(z)} is analytic on the upper half plane $\cbb^+$. Once this is accomplished, in view of Cauchy's theorem for contour integrals, we are then able to establish Theorem~\ref{thm:equipartition-of-energy:2D:gamma>0:powerlaw.exponential}, whence equipartition of energy holds for~\eqref{eq:GLE:2D}. Some statements appear in the auxiliary Lemmas \ref{lem:equipartition-of-energy:2D:gamma>0}--\ref{lem:qtilde_2:gamma>0}, whose proofs are deferred to the end of the section.

First, in the following lemma we provide sufficient conditions on $f_2$ and on the Fourier transform of $K$ for equipartition of energy to hold.
\begin{lemma} \label{lem:equipartition-of-energy:2D:gamma>0}
Let $(x(t),v(t))$ be the stationary process associated with the weak solution $(X,V)$ of~\eqref{eq:GLE:2D}. Suppose that, for $z\in\cbb^+\setminus\{0\}$,
\begin{equation} \label{cond:lim.Kcos+iKsin=0}
\lim_{|z|\to\infty}\frac{|\Kcos(z)+\imag \Ksin(z)|}{|z|}=0=\lim_{|z|\to0}\big|z\big(\Kcos(z)+\imag \Ksin(z)\big)\big|.
\end{equation}
Let $f_2(z)$ be as in~\eqref{form:f_2(z)} and let $\Ctilde(R)$ be the curve as in~\eqref{form:curve:Ctilde(R)}. Then, the following holds.
\begin{itemize}
\item [(a)] If, for all large enough $R>0$,
\begin{equation} \label{cond:int_Ctilde(R) f_2(z)=0}
\int_{\Ctilde(R)}\close f_2(z)\d z=0,
\end{equation}
then
\begin{equation}\label{e:E[x(0)^2]=kB*T}
\E[\gamma x(0)^2]=k_BT.
\end{equation}
\item [(b)] If, for all large enough $R>0$,
\begin{equation} \label{cond:int_C(R) z^2f_2(z)=0}
\int_{\Ctilde(R)}\close z^2f_2(z)\d z=0,
\end{equation}
then
\begin{equation}\label{e:E[mv(0)^2]=kB*T}
\E[m v(0)^2]=k_BT.
\end{equation}
\end{itemize}
\end{lemma}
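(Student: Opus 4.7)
The plan is to parallel the strategy of Lemma~\ref{lem:equiparition:1D:gamma=0}, but now working in the upper half plane $\cbb^+$ and exploiting an algebraic identity on the real axis that links $f_2$ to the spectral densities $r_{11}$ and $r_{22}$. Writing $G(z)=\lambda z+\beta z[\Kcos(z)+\imag\Ksin(z)]+\imag(\gamma-mz^2)$ so that $f_2(z)=1/(zG(z))$, and using that $\Kcos$ is even and $\Ksin$ is odd, a short direct computation yields, for every $\omega>0$,
\begin{equation*}
f_2(\omega)+f_2(-\omega)=\frac{2\bigl(\lambda+\beta\Kcos(\omega)\bigr)}{\bigl|\gamma-m\omega^2+\beta\omega\Ksin(\omega)\bigr|^2+\omega^2\bigl|\lambda+\beta\Kcos(\omega)\bigr|^2}=r_{11}(\omega),
\end{equation*}
matching \eqref{form:spectral-density:x:Kcos-Ksin}; multiplying through by $\omega^2$ gives $\omega^2 f_2(\omega)+(-\omega)^2 f_2(-\omega)=r_{22}(\omega)$.

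Next, I would decompose $\int_{\Ctilde(R)}f_2(z)\,\d z$ (respectively $\int_{\Ctilde(R)} z^2 f_2(z)\,\d z$) into the four pieces of \eqref{form:curve:Ctilde(R)}. Changing variable $\omega\mapsto-\omega$ on $[-R,-1/R]$ and invoking the identity above, the two real-line pieces combine to $\int_{1/R}^R r_{11}(\omega)\,\d\omega$ (respectively $\int_{1/R}^R r_{22}(\omega)\,\d\omega$), which converges as $R\to\infty$ to $\int_0^\infty r_{11}\,\d\omega$ (respectively $\int_0^\infty r_{22}\,\d\omega$) by monotone convergence; both limits are finite by Lemma~\ref{lem:r_(ij):L1}.

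The remaining task is to evaluate the two arc contributions. On $C_R^+$, parametrizing $z=Re^{\imag\theta}$ and using the $|z|\to\infty$ limit in \eqref{cond:lim.Kcos+iKsin=0}, one obtains $G(z)/z^2\to -\imag m$; hence $|f_2(z)|=O(|z|^{-3})$, so that $\int_{C_R^+} f_2\,\d z\to 0$, while the integrand $\imag z^2/G(z)$ in $\int_{C_R^+} z^2 f_2(z)\,\d z=\int_0^\pi (\imag z^2/G(z))\,\d\theta$ converges uniformly to $-1/m$, producing the limit $-\pi/m$. On $C_{1/R}^+$, parametrizing $z=e^{\imag\theta}/R$ with $\theta$ running from $\pi$ to $0$ (so that $\Ctilde(R)$ is traversed counterclockwise) and using the $|z|\to 0$ limit in \eqref{cond:lim.Kcos+iKsin=0} together with $\gamma>0$, one gets $G(z)\to\imag\gamma$ and hence $zf_2(z)=1/G(z)\to 1/(\imag\gamma)$. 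This gives $\int_{C_{1/R}^+} f_2(z)\,\d z\to \int_\pi^0 (1/\gamma)\,\d\theta=-\pi/\gamma$, whereas $z^2 f_2(z)=z/G(z)=O(1/R)$ forces $\int_{C_{1/R}^+} z^2 f_2\,\d z\to 0$.

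Combining these four limits with the vanishing hypotheses \eqref{cond:int_Ctilde(R) f_2(z)=0} and \eqref{cond:int_C(R) z^2f_2(z)=0} produces $\int_0^\infty r_{11}(\omega)\,\d\omega=\pi/\gamma$ and $\int_0^\infty r_{22}(\omega)\,\d\omega=\pi/m$, respectively. Since $\E[x(0)^2]=(k_BT/\pi)\int_0^\infty r_{11}\,\d\omega$ and $\E[v(0)^2]=(k_BT/\pi)\int_0^\infty r_{22}\,\d\omega$ by \eqref{form:stationary-distribution:E[<G,phi1><G,phi2>]} and the evenness of the spectral densities, the identities \eqref{e:E[x(0)^2]=kB*T} and \eqref{e:E[mv(0)^2]=kB*T} follow. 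The main subtlety lies in the role reversal of the two arcs between parts (a) and (b): for part (a) the inner arc $C_{1/R}^+$ supplies the residual $-\pi/\gamma$ (coming from $\gamma>0$) while the outer arc vanishes, and for part (b) the outer arc $C_R^+$ supplies $-\pi/m$ while the inner arc vanishes; in both cases dominated convergence applied to the arc parametrizations gives the passage to the limit.
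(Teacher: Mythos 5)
Your proof is correct and follows essentially the same strategy as the paper's: split $\int_{\Ctilde(R)} f_2\,\d z$ (resp.\ $\int_{\Ctilde(R)} z^2 f_2\,\d z$) over the four pieces of $\Ctilde(R)$, combine the two real-line pieces into $\int_{1/R}^{R} r_{11}\,\d\omega$ (resp.\ $\int_{1/R}^R r_{22}\,\d\omega$) using the parity of $\Kcos$ and $\Ksin$, apply monotone convergence there and dominated convergence on the two arcs, with the outer arc vanishing and the inner arc producing $-\pi/\gamma$ in part (a) and the roles reversed in part (b). The only cosmetic difference is that you record the algebraic identity $f_2(\omega)+f_2(-\omega)=r_{11}(\omega)$ explicitly before decomposing the contour, whereas the paper carries out the equivalent change of variable $z\mapsto -\omega$ directly inside the contour decomposition.
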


The proof of Theorem~\ref{thm:equipartition-of-energy:2D:gamma>0:powerlaw.exponential} is based on verifying that the assumptions of Lemma~\ref{lem:equipartition-of-energy:2D:gamma>0} are met. To this end, we show that $f_2$ is analytic on $\cbb^+\sm$, which is established based on the following lemmas.
\begin{lemma} \label{lem:q_2:gamma>0} Let $\mu$ be the representation measure on $[0,\infty)$ for $K\in\CM$ as in Theorem~\ref{thm:completely-monotone:Bernstein}. Let $q_2(z)$ be the function defined on $\cbb^*\setminus\{0\}$ and given by
\begin{equation} \label{form:q_2}
q_2(z)=\gamma-\lambda z +mz^2+\beta zp(z),
\end{equation}
where $p(z)=\int_0^\infty \frac{1}{z-x}\mu(\d x)$ is as in~\eqref{form:p.q_1}. Then,
\begin{itemize}
\item [(a)] $q_2(z)$ is analytic on $\cbb^*\setminus\{0\}$; and
\item [(b)] $q_2(z)$ does not admit any root in $\cbb^*\setminus\{0\}$.
\end{itemize}
\end{lemma}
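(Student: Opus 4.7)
Part (a) is essentially immediate: since $q_2(z) = \gamma - \lambda z + mz^2 + \beta z p(z)$ is a polynomial expression in $z$ and $p(z)$, analyticity on $\cbb^*\setminus\{0\}$ follows at once from Lemma~\ref{lem:p(x).q_1(x)}~(a), which states that $p(z)$ is analytic on that domain.

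For part (b), the natural attempt -- to mimic the proof of Lemma~\ref{lem:p(x).q_1(x)}~(b) by showing $\Re(q_2(z)) \ne 0$ directly -- runs into an obstacle: with $z = -u + \imag v$, $u \ge 0$, one has $m\Re(z^2) = m(u^2 - v^2)$, which may be negative and spoil the sign of $\Re(q_2(z))$. The plan is to circumvent this by working instead with $q_2(z)/z$: since $z \ne 0$ on $\cbb^*\setminus\{0\}$, $q_2$ and $q_2/z$ have exactly the same zero set, so it suffices to show
\begin{equation*}
\Re\!\left(\frac{q_2(z)}{z}\right) = \Re\!\left(\frac{\gamma}{z} - \lambda + mz + \beta p(z)\right) < 0, \quad z\in \cbb^*\setminus\{0\}.
\end{equation*}

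Writing $z = -u + \imag v$ with $u \ge 0$ and $(u,v) \ne (0,0)$, the three non-constant summands will each be shown to have nonpositive real part: namely, $\Re(1/z) = -u/(u^2+v^2) \le 0$; $\Re(mz) = -mu \le 0$; and, from the representation \eqref{form:p.q_1} of $p$,
\begin{equation*}
\Re(\beta p(z)) = -\beta\int_0^\infty \frac{u+x}{(u+x)^2 + v^2}\,\mu(\d x) \le 0.
\end{equation*}
Combining these three bounds yields $\Re(q_2(z)/z) \le -\lambda < 0$, so $q_2(z)/z$, and hence $q_2(z)$, cannot vanish on $\cbb^*\setminus\{0\}$. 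The only subtle point is confirming that each of the three real parts is genuinely nonpositive throughout $\cbb^*\setminus\{0\}$ (including the imaginary axis $u = 0$, where the first and second contributions drop out but the third remains $-\beta\int_0^\infty x/(x^2+v^2)\,\mu(\d x) \le 0$), after which the conclusion follows from the strict negativity of $-\lambda$.
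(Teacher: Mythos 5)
Your proof is correct, and part (b) takes a genuinely different (and arguably cleaner) route than the paper's. The paper argues by cases: it first rules out negative real roots by inspecting $\Re(q_2(z))$ directly (where all terms are positive), and then, for a hypothetical root $z_0=-u+\imag v$ with $v\neq 0$, it computes the imaginary part of $q_2(z_0)$, which factors as $-v\big(\lambda+2mu+\beta\int_0^\infty x\,[(u+x)^2+v^2]^{-1}\mu(\d x)\big)$ and hence forces $v=0$, a contradiction. Your approach instead divides by $z$ and shows $\Re(q_2(z)/z)\le-\lambda<0$ uniformly on $\cbb^*\setminus\{0\}$, which disposes of all cases at once; the three sign checks you list are each valid (note $u+x\ge 0$ since $u\ge 0$ and $\mu$ lives on $[0,\infty)$, and the integral converges absolutely by the bound $|z-x|\ge\max\{|z|,x\}$ already used for $p$). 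It is worth noting that your trick is essentially the one the paper itself deploys in the companion Lemma~\ref{lem:qtilde_2:gamma>0} for the $K(t)=\f(t^2)$ case, where $\qtil_2(z)$ is rewritten as $z\big(\qtil_1(-z)-\imag\gamma/(-z)\big)$ and the real part of the bracket is shown not to vanish; so your argument unifies the treatment of the two kernel classes, at no cost in rigor.
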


\begin{lemma} \label{lem:qtilde_2:gamma>0} Suppose $K(t)=\f(t^2)$ where $\f\in\CM$. Let $\mu$ be the representation measure on $[0,\infty)$ for $\f$ as in Theorem~\ref{thm:completely-monotone:Bernstein}. Let $\qtil_2(z)$ be the function defined on $\cbb^+\setminus\{0\}$ and given by
\begin{equation} \label{form:qtilde_2}
\qtil_2(z)=\lambda z+\beta z\ptil(-z) +\imag (\gamma-mz^2),
\end{equation}
where $\ptil(z)=\frac{\sqrt{\pi}}{2}\int_0^\infty\close \frac{1}{\sqrt{x}}w\Big(\!-\frac{z}{2\sqrt{x}}\Big)\mu(\d x)$ is as in~\eqref{form:ptilde.qtilde_1}. Then,
\begin{itemize}
\item [(a)] $\qtil_2(z)$ is analytic on $\cbb^+\setminus\{0\}$; and
\item [(b)] $\qtil_2(z)$ does not admit any root in $\cbb^+\setminus\{0\}$.
\end{itemize}
\end{lemma}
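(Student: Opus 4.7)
My plan is to reduce Lemma~\ref{lem:qtilde_2:gamma>0} to Lemma~\ref{lem:ptilde.qtilde_1} via a short algebraic identity. A direct expansion should give
\begin{equation*}
\qtil_2(z)=z\,\qtil_1(-z)+\imag\gamma, \qquad z\in\cbb^+\sm,
\end{equation*}
where $\qtil_1(w)=\lambda+\imag mw+\beta\ptil(w)$ is the function from \eqref{form:ptilde.qtilde_1}; indeed, the right-hand side equals $\lambda z-\imag mz^2+\beta z\ptil(-z)+\imag\gamma$, which coincides with \eqref{form:qtilde_2}. Once this identity is in hand, part (a) follows immediately: since $z\mapsto -z$ maps $\cbb^+\sm$ onto $\cbb^-\sm$ and Lemma~\ref{lem:ptilde.qtilde_1}, (a), asserts that $\qtil_1$ is analytic on $\cbb^-\sm$, the function $z\,\qtil_1(-z)+\imag\gamma$ is analytic on $\cbb^+\sm$.

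For part (b), I would argue by contradiction. Assume $\qtil_2(z_0)=0$ for some $z_0=u+\imag v\in\cbb^+\sm$ (so $v\ge 0$ and $(u,v)\ne(0,0)$); the identity then reads $z_0\,\qtil_1(-z_0)=-\imag\gamma$. The crucial input is that the proof of Lemma~\ref{lem:ptilde.qtilde_1}, (b), in fact establishes the stronger statement $\Re(\qtil_1(w))>0$ for every $w\in\cbb^-\sm$, by handling the subcases $\Im(w)<0$ and $w\in\bbR\sm$ separately and producing a strictly positive real part in each. Applied to $w=-z_0$, this yields $A:=\Re(\qtil_1(-z_0))>0$. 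Writing $\qtil_1(-z_0)=A+\imag B$, the equation $(u+\imag v)(A+\imag B)=-\imag\gamma$ decouples into
\begin{equation*}
uA-vB=0,\qquad uB+vA=-\gamma.
\end{equation*}
Multiplying the first relation by $u$, the second by $v$, and adding gives $A(u^2+v^2)=-\gamma v$. Since $A>0$, $u^2+v^2>0$, $\gamma>0$ and $v\ge 0$, the left-hand side is strictly positive while the right-hand side is nonpositive, a contradiction.

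The only step that requires any care is the extraction of strict positivity $\Re(\qtil_1(w))>0$ from the proof of Lemma~\ref{lem:ptilde.qtilde_1}, (b), as opposed to merely $\qtil_1(w)\ne 0$; but a quick look at the two subcases handled there confirms that strict positivity is indeed what is proved. Everything else is algebraic bookkeeping on a $2\times 2$ real linear system, and I do not anticipate any genuine obstacle.
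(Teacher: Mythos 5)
Your proposal is correct and follows essentially the same route as the paper: the paper factors $\qtil_2(z)=z\big(\qtil_1(-z)-\imag\gamma/(-z)\big)$ and shows that the second factor has strictly positive real part on $\cbb^-\sm$, using exactly the strict positivity $\Re(\qtil_1(w))>0$ that you extract from the two subcases in the proof of Lemma~\ref{lem:ptilde.qtilde_1}, (b). Your $2\times 2$ linear-system bookkeeping for the contradiction is only a cosmetic repackaging of the paper's computation of $\Re\big(\qtil_1(w)-\imag\gamma/w\big)$.
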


We now provide the proof of Theorem~\ref{thm:equipartition-of-energy:2D:gamma>0:powerlaw.exponential}. 
\begin{proof}[Proof of Theorem~\ref{thm:equipartition-of-energy:2D:gamma>0:powerlaw.exponential}] Similarly to the proof of Theorem~\ref{thm:equipartition-of-energy:1D:gamma=0:power-law.exponential}, we first consider the case where $K\in \CM$. It suffices to check that the conditions for Lemma~\ref{lem:equipartition-of-energy:2D:gamma>0} are met using auxiliary results provided in Lemma~\ref{lem:q_2:gamma>0}.

We first verify the limit condition~\eqref{cond:lim.Kcos+iKsin=0}. First note that $z\in \cbb^+\setminus\{0\}$ implies $\imag z\in \cbb^*\setminus\{0\}$. Then, as a consequence of relations~\eqref{form:complete-monotone:Kcos-Ksin} extended to $\cbb^+\setminus\{0\}$,
$$\Kcos(z)+\imag \Ksin(z)=\int_0^\infty\close \frac{x+\imag z}{x^2+z^2}\mu(\d x)=-\int_0^\infty\close\frac{1}{\imag z-x}\mu(\d x)=-p(\imag z),$$
where $p(z)$ is as in~\eqref{form:p.q_1}. Condition~\eqref{cond:lim.Kcos+iKsin=0} now follows immediately from Lemma~\ref{lem:p(x).q_1(x)}, (a), cf.~\eqref{lim:p/z=lim.zp=0}.

To verify the contour integral conditions~\eqref{cond:int_Ctilde(R) f_2(z)=0} and~\eqref{cond:int_C(R) z^2f_2(z)=0} for all large enough $R$, it suffices to prove that $f_2(z)$ is, indeed, analytic on $\cbb^+\setminus\{0\}$. To this end, recast
\begin{align*}
f_2(z)&=\frac{1}{z\big(\lambda z-\beta z\int_0^\infty\frac{1}{\imag z-x}\mu(dx)+\imag (\gamma-mz^2)\big)}\\
&=\frac{1}{\imag z\big( \gamma-\lambda (\imag z)+m(\imag z)^2+\beta(\imag z)\int_0^\infty\frac{1}{\imag z-x}\mu(\d x)    \big)}\\
&=\frac{1}{\imag zq_2(\imag z)},
\end{align*}
where $q_2(\cdot)$ is as in~\eqref{form:q_2}. Since $z\in\cbb^+\sm$, then $\imag z\in\cbb^*\sm$. Also, in view of Lemma~\ref{lem:q_2:gamma>0}, $q_2(\cdot)$ is analytic and does not have poles in $\cbb^*\setminus\{0\}$. It then follows immediately that, for $z\in \cbb^+\setminus\{0\}$, $(\imag z q_2(\imag z))^{-1}=f_2(z)$ is analytic on $\cbb^+\setminus\{0\}$, which clearly implies the contour integral conditions~\eqref{cond:int_Ctilde(R) f_2(z)=0} and~\eqref{cond:int_C(R) z^2f_2(z)=0}.

We now turn to the case where $K(t)=\f(t^2)\in\CM$. We have to verify the assumptions of Lemma~\ref{lem:equipartition-of-energy:2D:gamma>0} by  combining auxiliary results in Lemmas~\ref{lem:ptilde.qtilde_1} and \ref{lem:qtilde_2:gamma>0}. Recall that $\ptil(z)$ and $\qtil_2(z)$, respectively, are given by~\eqref{form:ptilde.qtilde_1} and \eqref{form:qtilde_2}. In light of~\eqref{form:complete-monotone:Kcos-Ksin:K(t)=phi(t^2)}, for $z\in\cbb^+\sm$,
\begin{align*}
\Kcos(z)+\imag\Ksin(z)=\ptil(-z),\quad\text{and}\quad f_2(z)=\frac{1}{z\qtil_2(z)}.
\end{align*}
The limits in~\eqref{cond:lim.Kcos+iKsin=0} are now a consequence of the fact that, for $z\in\cbb^+\sm$,
\begin{align}\label{e:lim_|p-tilde(-z)/z|=0=lim_|z*p-tilde(-z)|}
\lim_{|z|\to \infty}\frac{|\ptil(-z)|}{|z|}=0=\lim_{|z|\to0}\big|z\ptil(-z)\big|,
\end{align}
where \eqref{e:lim_|p-tilde(-z)/z|=0=lim_|z*p-tilde(-z)|} follows from Lemma~\ref{lem:ptilde.qtilde_1}, (a), cf.~\eqref{lim:ptilde/z=lim.zptilde=0}.

Also, by Lemma~\ref{lem:qtilde_2:gamma>0}, the function $f_2(z)=(z\qtil_2(z))^{-1}$ is analytic on $\cbb^+\sm$. It follows that the contour integral conditions~\eqref{cond:int_Ctilde(R) f_2(z)=0} and~\eqref{cond:int_C(R) z^2f_2(z)=0} hold for all large enough $R$. Consequently, the assumptions of Lemma~\ref{lem:equipartition-of-energy:2D:gamma>0} have been verified, which implies that relations~\eqref{e:E[x(0)^2]=kB*T} and~\eqref{e:E[mv(0)^2]=kB*T} hold. Thus, the proof is complete.
\end{proof}

We now provide the proof of Lemma~\ref{lem:equipartition-of-energy:2D:gamma>0}.

\begin{proof}[Proof of Lemma~\ref{lem:equipartition-of-energy:2D:gamma>0}]

(a) First, recall from~\eqref{form:spectral-density:x} and covariance relation~\eqref{form:stationary-distribution:E[<G,phi1><G,phi2>]} that
 \begin{align}\label{e:E[x(0)^2]}
\E[x(0)^2]&=\frac{k_BT}{2\pi}\int_{\rbb} r_{11}(\omega)\d\omega \nonumber\\
&= \frac{k_BT}{\pi}\int_0^\infty \close\frac{2\big(\lambda+\beta\Kcos(\omega)\big)}{\big|\imag (\gamma-m\omega^2-\beta \omega\Ksin(\omega))+\lambda\omega+\beta\omega\Kcos(\omega))\big|^2}\d\omega.
\end{align}
Hence, it suffices to prove that the value of the integral \eqref{e:E[x(0)^2]} is $\pi\gamma^{-1}$.

Let $f_2(z)$ be the function as in~\eqref{form:f_2(z)}. Then, the contour integral of $f_2$ on $\Ctilde(R)$ may be decomposed into
\begin{align*}
\int_{\Ctilde(R)}f_2(z)\d z&=\Big\{\int_{1/R}^R+\int_{C_R^+ }+\int_{-R}^{-1/R}\close+\int_{C_{1/R}^+ }\Big\}f_2(z)\d z\\
&=I_1 (R) +I_2(R) +I_3(R) +I_4(R).
\end{align*}
We now proceed to reexpress and establish the limiting behavior of each integral $I_i(R)$, $i = 1,2,3,4$, as $R \rightarrow \infty$. First, note that
$$
I_1(R) =\int_{1/R}^R \frac{\d \omega}{\omega(\lambda \omega+\beta \omega\Kcos(\omega) + \imag (\gamma-m\omega^2+\beta \omega\Ksin(\omega)))},
$$
where, once again, we use the notation $\omega$ for integration along the real axis. Turning to $I_3(R)$, recall that the function $\Kcos$ is even, whereas $\Ksin$ is odd. Thus, by a change of variable $z:=-\omega$, we can reexpress
\begin{align*}
I_3(R) &=\int_{R}^{1/R}\close \frac{-\d \omega}{-\omega(-\lambda \omega-\beta \omega\Kcos(-\omega) + \imag (\gamma-m\omega^2+\beta (-\omega)\Ksin(-\omega)))}\\
&=-\int_{1/R}^{R} \frac{\d \omega}{\omega(-\lambda \omega-\beta \omega\Kcos(\omega) + \imag (\gamma-m\omega^2+\beta \omega\Ksin(\omega)))}.
\end{align*}
Therefore,
$$
I_1(R) +I_3(R) =\int_{1/R}^R\frac{2\lambda+2\beta\Kcos(\omega)}{|\gamma-m\omega^2+\beta\omega\Ksin(\omega)|^2+\omega^2|\lambda+\beta\Kcos(\omega)|^2}\d\omega.
$$
By virtue of the Monotone Convergence Theorem,
\begin{equation}\label{e:I_1(R)+I_1(R)_->_int_r11}
I_1(R) +I_3(R)\to \int_0^\infty\close r_{11}(\omega)\d\omega,\quad\text{as }R\to\infty.
\end{equation}
With regards to $I_2(R)$, for $z\in C_{R}\subset\cbb^+\setminus\{0\}$, recast $f_2(z)$ as
\begin{align*}
f_2(z)&=\frac{1}{z\big(\lambda z+\beta z(\Kcos(z)+\imag \Ksin(z))+\imag (\gamma-mz^2)  \big)}\\
&=\frac{1}{z^3\big(\lambda z^{-1}+\beta z^{-1}(\Kcos(z)+\imag \Ksin(z))+\imag (\gamma z^{-2}-m)  \big)}.
\end{align*}
By making the change of variable $z:=Re^{\imag \theta}$,
\begin{equation}\label{e:I_2(R)}
\begin{aligned}
&I_2(R)\\
&= \int_0^{\pi} \frac{\imag \,\d\theta  }{R^2e^{\imag 2\theta}\big[\lambda R^{-1}e^{-\imag \theta}+\beta R^{-1}e^{-\imag \theta}[\Kcos(Re^{\imag \theta}))+\imag \Ksin(Re^{\imag \theta})]+\imag (\gamma R^{-2}e^{-\imag 2\theta}-m   )  \big]}.
\end{aligned}
\end{equation}
By~\eqref{cond:lim.Kcos+iKsin=0}, as $R\to\infty$, the integrand in \eqref{e:I_2(R)} converges to $0$. Hence, by the Dominated Convergence Theorem,
\begin{equation}\label{e:I_2(R)_->_0}
I_2(R)\to 0, \quad\text{as }R\to\infty.
\end{equation}
Similarly, by making a change of variable $z=R^{-1}e^{\imag \theta}$, we have
\begin{align}\label{e:I_4(R)}
I_4(R)= \int_{\pi}^0 \frac{\imag \d\theta  }{\lambda R^{-1}e^{\imag \theta}+\beta R^{-1}e^{\imag \theta}[\Kcos(R^{-1}e^{\imag \theta})+\imag \Ksin(R^{-1}e^{\imag \theta})]+\imag (\gamma-m R^{-2}e^{\imag 2\theta}) }.
\end{align}
As $R\to\infty$, again by relation~\eqref{cond:lim.Kcos+iKsin=0}, the integrand in \eqref{e:I_4(R)} converges to $\gamma^{-1}$. In light of the Dominated Convergence Theorem, this implies that
\begin{equation}\label{e:I_4(R)_->_-pi*gamma^(-1)}
I_4(R)\to -\pi\gamma^{-1}, \quad\text{as } R\to\infty.
\end{equation}
Collecting the limits \eqref{e:I_1(R)+I_1(R)_->_int_r11}, \eqref{e:I_2(R)_->_0} and \eqref{e:I_4(R)_->_-pi*gamma^(-1)}, we obtain
$$
0=\lim_{R\to\infty}\int_{C(R)}\close f_2(z)\d z= \int_0^\infty \close r_{11}(\omega)\d\omega-\pi\gamma^{-1}.
$$
Hence,
$$
\E[\gamma x(0)^2]=\gamma\frac{k_BT}{\pi}\int_0^\infty \close r_{11}(\omega)\d\omega=k_BT,
$$
which establishes \eqref{e:E[x(0)^2]=kB*T}.

With regards to $\E[v(0)^2]$, expressions \eqref{form:spectral-density:x} and \eqref{form:spectral-density:v} imply that
\begin{align}\label{e:E[v(0)^2]}
\E[v(0)^2]&=\frac{k_BT}{2\pi}\int_{\rbb}r_{22}(\omega)\d\omega \nonumber \\
&= \frac{k_BT}{\pi}\int_0^\infty \close\frac{2\omega^2\big(\lambda+\beta\Kcos(\omega)\big)}{\big|\imag (\gamma-m\omega^2-\beta \omega\Ksin(\omega))+\lambda\omega+\beta\omega\Kcos(\omega))\big|^2}\d\omega.
\end{align}
We now claim that the integral in \eqref{e:E[v(0)^2]} is equal to $\pi m^{-1}$. Similarly to the argument for $\E[x(0)^2]$, to establish this we consider the contour integral
\begin{align}\label{e:int_C-tilde(R)z^2f(z)dz}
\int_{\Ctilde(R)}z^2f(z)\d z&=\Big\{\int_{1/R}^R+\int_{C_R^+ }+\int_{-R}^{-1/R}\close+\int_{C_{1/R}^+ }\Big\}z^2f_2(z)\d z\\
&=I_5 (R) +I_6(R) +I_7(R) +I_8(R).
\end{align}
By calculations analogous to those for $I_1(R)+I_3(R)$,
\begin{equation}\label{e:I_5(R)+I_7(R)->_int}
I_5(R)+I_7(R)\to \int_0^\infty\close \frac{2\omega^2\big(\lambda+\beta\Kcos(\omega)\big)}{\big|\imag (\gamma-m\omega^2-\beta \omega\Ksin(\omega))+\lambda\omega+\beta\omega\Kcos(\omega))\big|^2}\d\omega, \quad\text{as } R\to\infty,
\end{equation}
where the limit is a consequence of the Monotone Convergence Theorem.

On the other hand, in regard to integration along the outer half circle $C^+_R$, note that we can express
$$
z^2f(z)=\frac{1}{z\big(\lambda z^{-1}+\beta z^{-1}(\Kcos(z)+\imag \Ksin(z))+\imag (\gamma z^{-2}-m)  \big)}.
$$
Thus, by making a change of variable $z= Re^{\imag \theta} $, we obtain
\begin{align*}
I_6(R)&= \int_0^\pi \frac{\imag \,\d\omega}{\lambda R^{-1}e^{-\imag \theta}+\beta R^{-1}e^{-\imag \theta}[\Kcos(Re^{\imag \theta}))+\imag \Ksin(Re^{\imag \theta})]+\imag (\gamma R^{-2}e^{-\imag 2\theta}-m   ) }.
\end{align*}
Consequently, in light of the Dominated Convergence Theorem together with condition~\eqref{cond:lim.Kcos+iKsin=0},
\begin{equation}\label{e:I_6(R)->_-pi*m^(-1)}
I_6(R)\to -\pi m^{-1},\quad\text{as }R\to\infty.
\end{equation}
Likewise, by a change of variable $z=R^{-1}e^{\imag \theta}$,
\begin{align}\label{e:I_8(R)}
I_8(R)&= \int_0^\pi \frac{R^{-3}e^{\imag 3\theta}\imag \,\d\omega}{\lambda R^{-1}e^{\imag \theta}+\beta R^{-1}e^{\imag \theta}[\Kcos(R^{-1} e^{\imag \theta}))+\imag \Ksin(R^{-1}e^{\imag \theta})]+\imag (\gamma -mR^{-2}e^{\imag 2\theta}   ) }.
\end{align}
By~\eqref{cond:lim.Kcos+iKsin=0}, the integrand in \eqref{e:I_8(R)} converges to zero. Therefore, by the Dominated Convergence Theorem,
\begin{equation}\label{e:I_8(R)->0}
I_8(R)\to 0, \quad\text{as }R\to\infty.
\end{equation}
Based on expressions \eqref{e:int_C-tilde(R)z^2f(z)dz}, \eqref{e:I_5(R)+I_7(R)->_int}, \eqref{e:I_6(R)->_-pi*m^(-1)} and \eqref{e:I_8(R)->0}, we obtain the limit
$$
0=\lim_{R\to\infty}\int_{C(R)}\close z^2f(z)\d z=\int_0^\infty\close r_{22}(\omega)\d\omega-\pi m^{-1}.
$$
Hence,
$$
\E[mv(0)^2]=m\frac{k_BT}{\pi}\int_0^\infty\close r_{22}(\omega)\d\omega=k_BT.
$$
This establishes \eqref{e:E[mv(0)^2]=kB*T}.
\end{proof}

We now provide the proof of Lemma \ref{lem:q_2:gamma>0}.

\begin{proof}[Proof of Lemma \ref{lem:q_2:gamma>0}]
In view of Lemma~\ref{lem:p(x).q_1(x)} (a), $p(z)$ is analytic in $\cbb^*\sm$, and so is $q_2(z)$. This shows $(a)$.

We now show $(b)$, i.e., we prove that $q_2(z)$ does not admit any root in $\cbb^*\setminus\{0\}$. Similarly to the proof of Lemma~\ref{lem:p(x).q_1(x)} for $q_1(z)$, first observe that $q_2(z)$ cannot have a (real) negative real root. Indeed, if $z<0$, then
$$
q_2(z)=\gamma+\lambda |z| +m|z|^2+\beta |z|\int_0^\infty\close\frac{1}{|z|+x}\mu(\d x) > 0.
$$
Next, by means of contradiction, suppose that $z_0 =-u+\imag v$, is a root of $q_2(z)$, where $u\ge 0$ and
\begin{equation}\label{e:v_neq_0}
v\neq 0.
\end{equation}
A routine calculation shows that the condition $q_2(z_0) = 0$ is equivalent to
\begin{align*}
0&=\gamma+m(u^2-v^2)-\imag 2uv+\lambda u-\imag \lambda v+\beta(-u+\imag v)\int_0^\infty\close\frac{1}{-u-x+\imag v}\mu(\d x)\\
&=\gamma+m(u^2-v^2)+\lambda u+\int_0^\infty\frac{\beta u (u+x)+\beta v^2}{(u+x)^2+v^2}\mu(\d x)\\
&\qquad -\imag v\Big(2mu+\lambda +\beta \int_0^\infty\close\frac{x}{(u+x)^2+v^2}\mu(\d x) \Big).
\end{align*}
Consequently, $v=0$, which contradicts \eqref{e:v_neq_0}. This establishes $(b)$.
\end{proof}

We now provide the proof of Lemma~\ref{lem:qtilde_2:gamma>0}. The argument is essentially the same as that for proving Lemma~\ref{lem:ptilde.qtilde_1}, (b).

\begin{proof}[Proof of Lemma~\ref{lem:qtilde_2:gamma>0}]
By Lemma~\ref{lem:ptilde.qtilde_1}, (a), $\ptil(z)$ is analytic on $\bbC^-\sm$. Therefore, $\qtil_2(z)=\lambda z+\beta z\ptil(-z) +\imag (\gamma-mz^2)$ is analytic on $\bbC^+\sm$. This establishes (a).

We now turn to (b), i.e., we want to show that $\qtil_2(z)$ does not admit any root in $\bbC^+\sm$. To see this, first recall that $\qtil_1(z)$ is given by~\eqref{form:ptilde.qtilde_1}. Then, $\qtil_2(z)$ as in~\eqref{form:qtilde_2} can be rewritten as
\begin{align*}
\qtil_2(z)=z\Big(\lambda+\beta\ptil(-z)+\imag m(-z)-\imag \frac{\gamma}{-z}\Big  )=z\Big(\qtil_1(-z)-\imag \frac{\gamma}{-z}\Big).
\end{align*}
Thus, it suffices to show that $\qtil_1(z)-\imag \frac{\gamma}{z}$ does not admit any root in $\bbC^-\sm$. The argument for showing this is similar to the one in the proof of Lemma~\ref{lem:ptilde.qtilde_1}, (b), and involves two cases, i.e., for real and non-real $z\in \bbC^-\sm$.

Assume first that $\Im(z)<0$. Write $z=u+\imag v$, $v<0$ and note that, by expression \eqref{e:w(z)_Hilbert},
$$
\qtil_1(z)-\imag \frac{\gamma}{z}=\lambda+\imag m(u+\imag v)-\beta\frac{\imag}{2\sqrt{\pi}}\int_0^\infty\close\frac{1}{\sqrt{x}}\int_{\rbb}\frac{e^{-t^2}}{ \frac{u}{2\sqrt{x}} +\imag \frac{v}{2\sqrt{x}}+t}\d t\,\mu(\d x)-\imag \frac{\gamma}{z}.
$$
After a routine calculation, we obtain
\begin{equation}\label{e:Re(q1-tilde(z)-igamma/z)}
\Re\Big(\qtil_1(z)-\imag\frac{\gamma}{z} \Big)= \lambda+ \frac{\beta}{2\sqrt{\pi}}\int_0^\infty\close\frac{1}{\sqrt{x}}\int_{\rbb}\frac{e^{-t^2}}{ (\frac{u}{2\sqrt{x}}+t)^2 + \frac{v^2}{4x}}\d t\frac{(-v)}{2\sqrt{x}}\mu(\d x)+ m (-v)+\frac{\gamma(-v)}{u^2+v^2} > 0.
\end{equation}
Alternatively, assume $z = u \in \bbR\sm$. Then, by expression \eqref{e:w(x)},
$$
\qtil_1(u) -\imag \frac{\gamma}{u}=\lambda+\beta\frac{\sqrt{\pi}}{2}\int_0^\infty\close\frac{1}{\sqrt{x}} \Big[e^{-\frac{u^2}{4x}}+\frac{2\imag}{\sqrt{\pi}}\daw\Big(\!-\frac{u}{2\sqrt{x}}\Big)\Big]\mu(\d x)+ \imag  m u -\imag \frac{\gamma}{z}.
$$
Hence,
\begin{equation}\label{e:Re(q1-tilde(u)-igamma/u)}
\Re\Big(\qtil_1(u)-\imag\frac{\gamma}{u} \Big)= \lambda +\beta\frac{\sqrt{\pi}}{2}\int_0^\infty\close\frac{1}{\sqrt{x}}e^{-\frac{u^2}{4x}}\mu(\d x)>0.
\end{equation}
By \eqref{e:Re(q1-tilde(z)-igamma/z)} and \eqref{e:Re(q1-tilde(u)-igamma/u)}, we conclude that $\qtil_1(z)-\imag\gamma/z$ has no root in $z\in\cbb^-\sm$, and that neither does $\qtil_2(z)$ in $\cbb^+\sm$. This finishes the proof.
\end{proof}

\section*{acknowledgment}

The authors would like to thank two anonymous reviewers for their helpful comments and suggestions.

\appendix

\section{Stationary Random Operators}  \label{sec:stationary-operator}

In this section, we review and generalize the framework of stationary distributions~\cite{didier2020asymptotic,ito1954stationary,  mckinley2018anomalous,yaglom1957some}. The goal is to construct stationary random operators for the purpose of analyzing the well--posedness of the 2D GLE.

Hereinafter, $^*$ denotes Hermitian transposition and $\|\cdot\|$ denotes the operator norm. ${\mathcal H}_{\geq 0}(d,\bbC)$ and ${\mathcal M}(d,\bbC)$ denote, respectively, the convex cone of Hermitian positive semidefinite matrices and the space of $d \times d$, entry--wise $\bbC$-valued matrices.

Given $d\in\nbb$, $\ubf(t)=(u_1(t),\dots,u_d(t))^T$ denotes a $\bbC^d$--valued stochastic process. We now briefly recall the definitions of weak stationarity and mean squared continuity.
\begin{definition} \label{def:weak-stationary-process}
 A stochastic process $\{\ubf(t)\}_{t \in \rbb}$ is said to be weakly stationary if, for all $t, s\in\rbb$,
		\begin{itemize}
			\item[(a)] $\E \|\ubf(t) \ubf(t)^*\| <\infty$;
		\item[(b)] $\E[\ubf(t)]=\ubf$, for some constant vector $\ubf$ (we may assume $\ubf=\boldsymbol{0}$); and
		\item[(c)] the covariance matrix $\E\big[\ubf(t)\ubf(s)^* \big]$ only depends on the difference $t-s$.
		\end{itemize}
\end{definition}

\begin{definition} \label{def:weak-stationary-process:mean-squared-continous}
 A second order stochastic process $\{\ubf(t)\}_{t \in \rbb}$ is said to be mean squared continuous if, for all $t \in\rbb$, $\lim_{h\to 0}\E  (\ubf(t+h)-\ubf(t))^*(\ubf(t+h)-\ubf(t)) =0$.
\end{definition}

In the following theorem, we recall the fact that, under mild conditions, the covariance structure of a weakly stationary process is characterized by its so-named spectral measure (see also \cite{rozanov:1967}, \cite[Theorem 7.1]{lindgren2012stationary} and \cite[Chapter 4]{brockwell:davis:1991}).
\begin{theorem}\label{thm:weak-stationary-process:characterization} A mean squared continuous process $\{\ubf(t)\}_{t \in \bbR}$ is weakly stationary if and only if its matrix--valued covariance function has the representation
\begin{equation}\label{e:E[u(t)u(s)*]}
\E\big[\ubf(t)\ubf(s)^*\big]=\int_\rbb e^{\imag (t-s)\omega}\nu(\d\omega), \quad t,s \in \bbR.
\end{equation}
In \eqref{e:E[u(t)u(s)*]},
\begin{equation}\label{cond:weak-stationary-process:Hermite-positive-definite}
\nu(\d\omega)=(\nu_{ij}(\d\omega))_{1\le i,j\le d} \in {\mathcal H}_{\geq 0}(d,\bbC)
\end{equation}
is a matrix-valued Borel measure such that $\|\nu(\bbR)\|< \infty$.
\end{theorem}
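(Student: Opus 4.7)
The plan is to prove the two implications separately, with the forward (spectral representation) direction being the substantive one, which I will obtain by reducing to the scalar Bochner theorem via linear projections.

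For sufficiency ($\Leftarrow$), suppose that the covariance has the stated integral representation with $\nu \in \mathcal{H}_{\geq 0}(d,\bbC)$-valued and $\|\nu(\bbR)\| < \infty$. Then $\E[\ubf(t)\ubf(s)^*]$ depends only on $t-s$, which, together with the finite moment condition obtained by setting $t=s$, yields (a)--(c) in Definition~\ref{def:weak-stationary-process}. Mean squared continuity follows from dominated convergence applied to $\int_\bbR |e^{\imag h\omega}-1|^2 \nu_{ii}(\d\omega)$ as $h\to 0$, entry by entry on the diagonal.

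For necessity ($\Rightarrow$), set $K(h) := \E[\ubf(t+h)\ubf(t)^*] \in \mathcal{M}(d,\bbC)$, which is well defined by weak stationarity. Mean squared continuity implies $h \mapsto K(h)$ is continuous in the operator norm, via the Cauchy--Schwarz bound $\|K(h_1)-K(h_2)\| \le (\E\|\ubf(0)\|^2)^{1/2}(\E\|\ubf(h_1-h_2)-\ubf(0)\|^2)^{1/2}$. The key reduction is to test against vectors $c \in \bbC^d$: define the scalar process $X_c(t) := c^*\ubf(t)$, which is itself weakly stationary and mean squared continuous with covariance $K_c(h) := c^* K(h) c$. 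For any $t_1,\dots,t_n \in \bbR$ and $\alpha_1,\dots,\alpha_n \in \bbC$,
\begin{equation*}
\sum_{j,k=1}^n \alpha_j \overline{\alpha_k}\, K_c(t_j-t_k) = \E\Big|\sum_{j=1}^n \alpha_j c^*\ubf(t_j)\Big|^2 \ge 0,
\end{equation*}
so $K_c$ is a continuous (scalar) positive definite function on $\bbR$. By the classical Bochner theorem, there exists a unique finite nonnegative Borel measure $\mu_c$ on $\bbR$ such that $K_c(h) = \int_\bbR e^{\imag h\omega} \mu_c(\d\omega)$, with $\mu_c(\bbR) = c^* K(0) c < \infty$.

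Next, I would assemble the matrix-valued measure via polarization. For each pair $i,j \in \{1,\dots,d\}$ with $i\ne j$, the four scalar measures $\mu_{e_i \pm e_j}$ and $\mu_{e_i \pm \imag e_j}$ combine through
\begin{equation*}
\nu_{ij}(\d\omega) := \tfrac{1}{4}\bigl(\mu_{e_i+e_j} - \mu_{e_i-e_j}\bigr)(\d\omega) + \tfrac{\imag}{4}\bigl(\mu_{e_i+\imag e_j} - \mu_{e_i-\imag e_j}\bigr)(\d\omega),
\end{equation*}
and $\nu_{ii}(\d\omega) := \mu_{e_i}(\d\omega)$, producing complex-valued Borel measures of finite variation. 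By taking Fourier transforms and using the polarization identity matched to $K_{ij}(h) = e_i^* K(h) e_j$, one verifies that $K(h) = \int_\bbR e^{\imag h\omega} \nu(\d\omega)$ with $\nu = (\nu_{ij})_{1\le i,j\le d}$. The required property \eqref{cond:weak-stationary-process:Hermite-positive-definite} then follows because, for every Borel set $B \subseteq \bbR$ and every $c \in \bbC^d$, $c^*\nu(B)c = \mu_c(B) \ge 0$ by construction, while Hermitianity of $\nu(B)$ is obtained from the identity $\nu_{ji}(B) = \overline{\nu_{ij}(B)}$ which is a direct consequence of polarization. Finally, $\|\nu(\bbR)\| < \infty$ since $\nu(\bbR) = K(0)$ has finite operator norm by condition (a) of Definition~\ref{def:weak-stationary-process}.

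The main obstacle I anticipate is the clean verification that the matrix of complex measures built by polarization is genuinely an $\mathcal{H}_{\geq 0}(d,\bbC)$-valued measure at every Borel set $B$, rather than just yielding a positive semidefinite $\nu(\bbR)$ or a formal agreement of Fourier transforms. The cleanest route is to avoid ever constructing $\nu$ entry by entry and instead argue that the set function $B \mapsto \nu(B)$ defined via the prescription $c^* \nu(B) c := \mu_c(B)$ extends uniquely by the polarization identity to a matrix-valued Borel measure; countable additivity and Hermitian positive semidefiniteness are then inherited from the scalar $\mu_c$'s simultaneously for all $c$, provided one checks consistency across different choices of $c$ on a countably generated algebra. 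This step is where care is required.
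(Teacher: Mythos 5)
The paper does not supply a proof of this theorem at all; it is recalled as a classical fact with citations to Rozanov, Lindgren, and Brockwell--Davis, so there is no in-house argument to compare against. Your proposal is the standard Cram\'er-type argument: project onto scalar processes $c^*\ubf(t)$, apply the scalar Bochner theorem to each, and reassemble the matrix-valued measure by polarization. That strategy is correct and complete in outline. Two remarks. First, with the convention $K(h) = \E[\ubf(t+h)\ubf(t)^*]$ and $K_c(h) = c^*K(h)c$, a direct computation shows that the polarization formula you wrote,
\begin{equation*}
\nu_{ij} := \tfrac{1}{4}\bigl(\mu_{e_i+e_j} - \mu_{e_i-e_j}\bigr) + \tfrac{\imag}{4}\bigl(\mu_{e_i+\imag e_j} - \mu_{e_i-\imag e_j}\bigr),
\end{equation*}
has Fourier transform $K_{ji}$ rather than $K_{ij}$: the imaginary piece contributes $-\tfrac{1}{2}(K_{ij}-K_{ji})$, not $+\tfrac{1}{2}(K_{ij}-K_{ji})$. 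Flipping the sign on the $\imag$ term or swapping $e_i + \imag e_j \leftrightarrow e_i - \imag e_j$ fixes it. Second, the gap you flag at the end --- verifying that $c^*\nu(B)c \ge 0$ for every $c\in\bbC^d$ and every Borel set $B$ --- closes more cleanly than by redefining $\nu$ through $c^*\nu(B)c := \mu_c(B)$ on a countably generated algebra. Once polarization is set up so that the complex measure $\sum_{i,j}\overline{c_i}\,\nu_{ij}\,c_j$ has Fourier transform $K_c$, it is a finite complex measure with the same transform as the finite nonnegative measure $\mu_c$ from the scalar Bochner theorem; injectivity of the Fourier transform on finite measures forces them to coincide, and nonnegativity of $c^*\nu(\cdot)c$ follows at once, simultaneously for all $c$. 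The Hermitianity $\nu_{ji} = \overline{\nu_{ij}}$ then comes from the corresponding identity $K_{ji}(h) = \overline{K_{ij}(-h)}$ together with uniqueness.
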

\begin{remark} \label{rem:weak-stationary-process:characterization} If the matrix--valued measure $\nu(\d\omega)$ is entry--wise absolutely continuous with respect to the Lebesgue measure, then we can write
$$
\nu(\d\omega)=f(\omega)\d\omega
$$
for some entry-wise integrable function $f$ taking values in ${\mathcal H}_{\geq 0}(d,\bbC)$ a.e.\ (cf.\ \cite[Theorem 7.1]{lindgren2012stationary}). The function $f$ is called the \emph{spectral density} of $\ubf(t)$.
\end{remark}

Analogously, we briefly recall the notion of \emph{stationary random distributions}, a generalization of multivariate stationary processes, first introduced in~\cite{gelfand1955generalized,ito1954stationary}. So, let $\tau_y$ be the shift operator given by $\tau_y\varphi(x):=\varphi(x-y)$ for any $\f\in\Sc$. Also, let $L^2(\Omega)$ be the space of all complex--valued random variables with finite variance. We now provide the definition of a stationary distribution (see \cite[Section 1]{yaglom1957some}).
\begin{definition} \label{def:stationary-distribution} A linear functional $F:\Sc\to L^2(\Omega)^d$ given by $\la F,\f\ra=( \la F_1,\f \ra, \dots,\la F_d,\f\ra)^T$ is called a \emph{stationary random distribution} on $\Sc$ if the following two conditions hold.
\begin{itemize}
\item[(a)] For all $y\in\rbb$ and for all $\f\in\Sc$, $\E\la F,\tau_y\f\ra=\E\la F,\f\ra$; and

\item[(b)] for all $y\in\rbb$ and for all $\varphi_1,\varphi_2\in\Sc$,
$$\E\big[ \la F,\tau_y \varphi_1\ra  \la F,\tau_y\varphi_2  \ra^* \big]=\E\big[ \la F, \varphi_1\ra  \la F,\varphi_2  \ra^*\big].$$
\end{itemize}

\end{definition}

Analogously to Theorem~\ref{thm:weak-stationary-process:characterization}, we have the following characterization of the second order structure of a stationary distribution in terms of covariance functionals and spectral measures. See also \cite{lindgren2012stationary,yaglom1957some}.

\begin{theorem}\label{thm:weak-stationary-distribution:characterization}  A linear functional $F:\Sc\to L^2(\Omega)^d$ is a stationary random distribution on $\Sc$ if and only if its covariance matrix
$B(\f_1,\f_2)$ has the representation
\begin{align} \label{form:stationary-distribution:E[<G,phi1><G,phi2>]}
B(\f_1,\f_2):=\E\big[  \la F, \varphi_1\ra  \la F,\varphi_2  \ra^*\big]=\int_\rbb \widehat{\f_1}(\omega)\overline{\widehat{\f_2}(\omega)}\nu(\d\omega).
\end{align}
In \eqref{form:stationary-distribution:E[<G,phi1><G,phi2>]}, $\nu(\d\omega)=(\nu_{ij}(\d\omega))_{1\le i,j\le d}$ is a ${\mathcal H}_{\geq 0}(d,\bbC)$--valued measure such that, for some $p\in\rbb$, 
\begin{align} \label{cond:weak-stationary-distribution:int.nu(d.omega)/(1+omega^2)^p<infty}
\int_{\rbb} \frac{\|\nu(\d\omega)\|}{(1+\omega^2)^p}<\infty.
\end{align}
\end{theorem}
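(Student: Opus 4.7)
The plan is to prove the two implications in the theorem separately, with the ``if'' direction being routine verification and the ``only if'' direction reducing to a matrix-valued Bochner--Schwartz theorem for positive-definite tempered distributions.

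For the ``if'' direction, suppose the covariance matrix admits the representation \eqref{form:stationary-distribution:E[<G,phi1><G,phi2>]} with a ${\mathcal H}_{\geq 0}(d,\bbC)$--valued measure $\nu$ satisfying \eqref{cond:weak-stationary-distribution:int.nu(d.omega)/(1+omega^2)^p<infty}. Using the elementary identity $\widehat{\tau_y\f}(\omega)=e^{-\imag y\omega}\widehat{\f}(\omega)$, one obtains
\begin{align*}
B(\tau_y\f_1,\tau_y\f_2)=\int_\rbb e^{-\imag y\omega}\widehat{\f_1}(\omega)\overline{e^{-\imag y\omega}\widehat{\f_2}(\omega)}\,\nu(\d\omega)=B(\f_1,\f_2),
\end{align*}
so that condition (b) of Definition~\ref{def:stationary-distribution} holds. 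The mean condition (a) follows by the standard reduction to centered functionals (cf.\ the parenthetical remark in Definition~\ref{def:weak-stationary-process}). Finiteness of $B(\f,\f)$ requires the growth bound \eqref{cond:weak-stationary-distribution:int.nu(d.omega)/(1+omega^2)^p<infty}, which is guaranteed because $\widehat{\f}$ is a Schwartz function and hence $|\widehat{\f}(\omega)|^2$ decays faster than any inverse polynomial.

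For the ``only if'' direction, assume $F$ is a stationary random distribution and define the scalar covariance functionals $B_{ij}(\f_1,\f_2):=\E[\la F_i,\f_1\ra\overline{\la F_j,\f_2\ra}]$. The first step is to establish that each $B_{ij}$ is a continuous sesquilinear form on $\Sc\times\Sc$; I would do this by combining the Cauchy--Schwarz inequality $|B_{ij}(\f_1,\f_2)|\le B_{ii}(\f_1,\f_1)^{1/2}B_{jj}(\f_2,\f_2)^{1/2}$ with the fact that $\f\mapsto\la F_i,\f\ra$ is a random distribution valued in $L^2(\Omega)$, so that $\f\mapsto B_{ii}(\f,\f)^{1/2}$ is continuous in some Schwartz seminorm. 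By the Schwartz kernel theorem, each $B_{ij}$ corresponds to a tempered distribution $K_{ij}\in\Sc'(\rbb^2)$. Stationarity of $F$ (condition (b) of Definition~\ref{def:stationary-distribution}) then gives translation invariance $K_{ij}(x+y,z+y)=K_{ij}(x,z)$, which forces $K_{ij}$ to take the form $K_{ij}(x,z)=T_{ij}(x-z)$ for a single tempered distribution $T_{ij}\in\Sc'(\rbb)$. Equivalently,
\begin{align*}
B_{ij}(\f_1,\f_2)=\la T_{ij},\f_1*\widetilde{\f_2}\ra,
\end{align*}
where $\widetilde{\f_2}(t)=\overline{\f_2(-t)}$.

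The second step is to observe that the matrix of distributions $(T_{ij})_{1\le i,j\le d}$ is Hermitian positive-definite in the distributional sense: for every vector $(\f_1,\dots,\f_d)\in\Sc^d$,
\begin{align*}
\sum_{i,j=1}^d\la T_{ij},\f_i*\widetilde{\f_j}\ra=\E\Big|\sum_{i=1}^d\la F_i,\f_i\ra\Big|^2\ge 0.
\end{align*}
The matrix-valued Bochner--Schwartz theorem (a direct generalization of the scalar version found, e.g., in Gelfand--Vilenkin~\cite{gelfand1955generalized} or Reed--Simon) then asserts that such a positive-definite matrix of tempered distributions is precisely the Fourier transform of a ${\mathcal H}_{\geq 0}(d,\bbC)$--valued tempered measure $\nu$, i.e., there exists $\nu$ satisfying \eqref{cond:weak-stationary-distribution:Hermite-positive-definite} and the polynomial growth condition \eqref{cond:weak-stationary-distribution:int.nu(d.omega)/(1+omega^2)^p<infty} such that $\la T_{ij},\psi\ra=\int_\rbb\widehat{\psi}(\omega)\,\nu_{ij}(\d\omega)$ for all $\psi\in\Sc$. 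Applying this with $\psi=\f_1*\widetilde{\f_2}$, whose Fourier transform is $\widehat{\f_1}\overline{\widehat{\f_2}}$, yields exactly the desired representation \eqref{form:stationary-distribution:E[<G,phi1><G,phi2>]}.

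The main obstacle I anticipate is rigorously justifying the matrix-valued Bochner--Schwartz step, particularly the diagonalization/polarization argument needed to extract a Hermitian positive-semidefinite matrix of measures rather than merely a collection of complex tempered measures $\nu_{ij}$. The standard trick is to apply the scalar Bochner--Schwartz theorem to each diagonal $T_{ii}$, obtaining nonnegative tempered measures $\nu_{ii}$, and then to obtain the off-diagonal $\nu_{ij}$ via polarization of the positive-definite functional applied to pairs $(\f_i,\f_j)$, verifying that the resulting matrix measure is Hermitian and positive semidefinite pointwise a.e.\ by testing against Schwartz functions approximating indicator functions of small sets. The growth condition \eqref{cond:weak-stationary-distribution:int.nu(d.omega)/(1+omega^2)^p<infty} inherits from the continuity seminorm estimate on $B_{ii}$.
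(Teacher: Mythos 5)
The paper does not actually prove Theorem~\ref{thm:weak-stationary-distribution:characterization}: it states it as a known result, citing~\cite{lindgren2012stationary,yaglom1957some}, so there is no in-text proof to compare against. Your argument follows the standard literature route and is essentially correct: use positivity and translation invariance of the covariance to recognize it as a positive-definite, translation-invariant bilinear kernel, reduce via the Schwartz kernel theorem to a single matrix of tempered distributions $T_{ij}$, then invoke a matrix-valued Bochner--Schwartz theorem to produce a Hermitian positive-semidefinite matrix of tempered measures $\nu$. The polarization sketch for passing from scalar to matrix Bochner--Schwartz is right in spirit; testing $\E\big|\la F_j,\f\ra + a\la F_i,\f\ra\big|^2 \ge 0$ over $a\in\cbb$ yields the off-diagonal measures, and the growth bound $\int\|\nu(\d\omega)\|/(1+\omega^2)^p<\infty$ follows from the diagonal ones since $\|\nu\|\le\sum_i\nu_{ii}$ for a Hermitian PSD matrix measure.

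One genuine issue worth flagging: your Schwartz kernel step requires the linear map $F:\Sc\to L^2(\Omega)^d$ to be continuous (equivalently, that each $\f\mapsto B_{ii}(\f,\f)^{1/2}$ be dominated by a Schwartz seminorm), and you cite this as ``the fact that $\f\mapsto\la F_i,\f\ra$ is a random distribution valued in $L^2(\Omega)$.'' However, Definition~\ref{def:stationary-distribution} as written in the paper only asks that $F$ be linear and satisfy the two shift-invariance conditions; continuity is nowhere stated. That hypothesis must be read in as part of what ``random distribution'' means (as in Itô and Gelfand--Vilenkin, where continuity is built into the definition), but as the paper is written your first step imports an assumption that isn't in the definition. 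A related minor point you correctly notice and then brush past: in the ``if'' direction, the covariance representation~\eqref{form:stationary-distribution:E[<G,phi1><G,phi2>]} alone cannot establish condition~(a) of Definition~\ref{def:stationary-distribution} about the mean; one must either assume $F$ is centered or accept that the equivalence is really about second-order structure only. Both of these are imprecisions in the theorem and definition as stated, not in your reasoning, but a rigorous write-up should make them explicit rather than letting them pass silently.
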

\begin{remark} \label{rem:weak-stationary-distribution:characterization} \emph{(a)} Note that, due to condition~\eqref{cond:weak-stationary-distribution:int.nu(d.omega)/(1+omega^2)^p<infty} and to the fact that $\f$ is a Schwartz function, $\la F, \varphi\ra$ is, indeed, an element of $L^2(\Omega)^d$.

\emph{(b)} Similarly to Remark~\ref{rem:weak-stationary-process:characterization}, in case the measure $\nu$ as in Theorem~\ref{thm:weak-stationary-distribution:characterization} has the form $\nu(\d\omega)=f(\omega)\d\omega$ for some a.e.\ ${\mathcal H}_{\geq 0}(d,\bbC)$--valued function $f$, then $f$ is called the spectral density of the stationary distribution $F$. Furthermore, in view of~\eqref{cond:weak-stationary-distribution:int.nu(d.omega)/(1+omega^2)^p<infty}, there exists $p\in\rbb$ such that
\begin{equation} \label{cond:weak-stationary-distribution:int.f(omega)/(1+omega^2)^p<infty}
\int_{\rbb} \frac{\|f(\omega)\|}{(1+\omega^2)^p}\d\omega<\infty.
\end{equation}
\end{remark}

Whereas Theorem~\ref{thm:weak-stationary-distribution:characterization} describes the spectral representation of the covariance structure of the stationary distribution $F$, a representation formula for the linear functional $F$ itself is provided next. For this purpose, we need the definition of a random measure.
\begin{definition} \label{def:random-measure}  Let $\nu$ be a matrix-valued Borel measure satisfying~\eqref{cond:weak-stationary-process:Hermite-positive-definite} and~\eqref{cond:weak-stationary-distribution:int.nu(d.omega)/(1+omega^2)^p<infty}. Let $\mathcal{B}_\nu$ be the collection of all Borel sets $E\subset\rbb$ such that $\|\nu(E)\| <\infty$. A map $Z:\mathcal{B}_\nu\to L^2(\Omega)^d$ is called a \emph{random measure} with respect to $\nu$ if for $E_1, E_2\in \mathcal{B}_\nu$,
\[\E\big[Z(E_1) Z(E_2)^*\big]=\nu(E_1\cap E_2).\]
\end{definition}
So, let $Z(\d \omega)$ be a random measure with respect to $\nu$ as in Definition \ref{def:random-measure}. The natural space of integrands for $Z(\d \omega)$ is given by
$$
L^2(\nu)  = \Big\{g: \bbR \rightarrow {\mathcal M}(d,\bbC): \Big\|\int_\rbb g(\omega)\nu (\d \omega)g(\omega)^* \Big\|<\infty \Big\}.
$$
In fact, for every $g_1,g_2\in L^2(\nu)$, the stochastic integral $\int_\rbb g(\omega)Z(\d \omega)$ is a well defined random vector such that
\begin{equation} \label{form:E|int.g(omega)Z(d.omega)|^2}
\E\bigg[\int_\rbb g_1(\omega)Z(\d \omega) \Big(\int_\rbb g_2(\omega')Z(\d \omega')\Big)^*\bigg]=\int_\rbb g_1(\omega)\nu(\d \omega)g_2(\omega)^*
\end{equation}
(see~\cite{ito1954stationary,yaglom1957some} for a detailed discussion). In the following theorem, $F$ is characterized by means of random measures (see also~\cite[Theorem 3]{yaglom1957some}).
\begin{theorem} \label{thm:weak-stationary-distribution:random-measure}
Let $F$ be a stationary random distribution with the spectral measure $\nu$ as in Theorem~\ref{thm:weak-stationary-distribution:characterization}. Then, there exists a random measure $Z$ corresponding to $\nu$ as in Definition~\ref{def:random-measure} such that, for all $\f\in\Sc$,
\begin{equation} \label{form:weak-stationary-distribution:random-measure}
\la F,\f\ra= \int_\rbb \widehat{\varphi}(\omega)\cdot I_d \hspace{0.5mm}Z(\d\omega),
\end{equation}
where $I_d$ is the identity matrix. Moreover, $Z$ is uniquely determined by $F$ and $\nu$.
\end{theorem}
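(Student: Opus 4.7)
The plan is to build $Z$ via the Itô--Yaglom construction of a spectral random measure (cf.\ \cite{ito1954stationary, yaglom1957some}), adapted to the matrix--valued setting. The key tool is the $M_d(\cbb)$--valued sesquilinear form
$$B(\psi_1,\psi_2):=\int_\rbb\psi_1(\omega)\overline{\psi_2(\omega)}\nu(\d\omega),\qquad\psi_1,\psi_2\in\Sc,$$
which is finite by the rapid decay of Schwartz functions together with condition~\eqref{cond:weak-stationary-distribution:int.nu(d.omega)/(1+omega^2)^p<infty}. Since the Fourier transform is an automorphism of $\Sc$, Theorem~\ref{thm:weak-stationary-distribution:characterization} can be recast as saying that the linear map $T_0:\Sc\to L^2(\Omega)^d$ defined by $T_0(\psi):=\la F,\check\psi\ra$ (where $\check\psi$ is the inverse Fourier transform) satisfies the matrix--valued isometry $\E[T_0(\psi_1)T_0(\psi_2)^*]=B(\psi_1,\psi_2)$.

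Next, I would introduce the scalar trace measure $\sigma:=\mathrm{tr}(\nu)$, which satisfies $\|\nu(E)\|\le\sigma(E)\le d\,\|\nu(E)\|$ for every Borel set $E$; in particular, $\sigma$ is a $\sigma$--finite positive Borel measure satisfying~\eqref{cond:weak-stationary-distribution:int.nu(d.omega)/(1+omega^2)^p<infty}, and $\Sc$ is dense in $L^2(\sigma)$. For each $E\in\mathcal{B}_\nu$, I would choose $\psi_n\in\Sc$ with $\int_\rbb|\psi_n-1_E|^2\d\sigma\to 0$. The matrix Cauchy--Schwarz bound
$$\|B(\phi_1,\phi_2)\|\le\Big(\int|\phi_1|^2\d\sigma\Big)^{1/2}\Big(\int|\phi_2|^2\d\sigma\Big)^{1/2}$$
then implies that $\{T_0(\psi_n)\}$ is Cauchy in $L^2(\Omega)^d$. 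I define $Z(E)$ to be its limit, checked to be independent of the approximating sequence.

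I would then verify that $Z$ is a random measure as in Definition~\ref{def:random-measure}: passing to the limit the identity $\E[T_0(\psi_n^{(1)})T_0(\psi_n^{(2)})^*]=B(\psi_n^{(1)},\psi_n^{(2)})$ with $\psi_n^{(j)}\to 1_{E_j}$ in $L^2(\sigma)$ yields $\E[Z(E_1)Z(E_2)^*]=\nu(E_1\cap E_2)$. Finite additivity is immediate from the linearity of $T_0$, and countable additivity on $\mathcal{B}_\nu$ follows from the Dominated Convergence Theorem applied to $\sigma$. Representation~\eqref{form:weak-stationary-distribution:random-measure} is then obtained by approximating $\widehat\varphi$ by simple functions $s_n$ in $L^2(\sigma)$: by definition~\eqref{form:E|int.g(omega)Z(d.omega)|^2}, $\int_\rbb s_n\cdot I_d\, Z(\d\omega)=T_0(s_n)\to T_0(\widehat\varphi)=\la F,\varphi\ra$ in $L^2(\Omega)^d$, identifying the limit as $\int_\rbb\widehat\varphi\cdot I_d\, Z(\d\omega)$. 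Uniqueness is immediate: if $Z'$ also satisfies \eqref{form:weak-stationary-distribution:random-measure}, then the isometry together with the density of $\widehat{\Sc}$ in $L^2(\sigma)$ forces $\E[(Z(E)-Z'(E))(Z(E)-Z'(E))^*]=0$ for every $E\in\mathcal{B}_\nu$.

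The main technical obstacle is the bookkeeping around the matrix--valued form $B(\cdot,\cdot)$: one must justify the matrix Cauchy--Schwarz bound above and verify that $\mathcal{B}_\nu$ is closed under countable disjoint unions so that countable additivity of $Z$ is well posed. Both points ultimately reduce to the comparability $\|\nu(\cdot)\|\le\sigma(\cdot)\le d\,\|\nu(\cdot)\|$ together with standard $\sigma$--finite measure theory, so while the proof is not deep, its clean presentation relies on carefully keeping scalar and matrix--valued norms separated throughout.
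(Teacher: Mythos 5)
The paper does not prove this theorem at all: it is recalled from the literature with a pointer to Yaglom's Theorem~3, so there is no in-paper argument to compare against. Your proposal is a correct, self-contained reconstruction of exactly the construction that reference uses (the It\^o--Yaglom/Karhunen isometry argument): the map $\psi\mapsto\la F,\check\psi\ra$ is an $L^2(\sigma)$--isometry into $L^2(\Omega)^d$ by Theorem~\ref{thm:weak-stationary-distribution:characterization} and the comparability $\|\nu(\cdot)\|\le\mathrm{tr}\,\nu(\cdot)\le d\,\|\nu(\cdot)\|$, it extends by density of $\Sc$ in $L^2(\sigma)$ (valid since $\sigma$ is a locally finite Borel, hence Radon, measure on $\rbb$), and evaluating the extension at indicators produces $Z$. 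All the main steps are sound, including the matrix Cauchy--Schwarz bound, which follows by polarizing $|a^*\nu(E)b|\le(a^*\nu(E)a)^{1/2}(b^*\nu(E)b)^{1/2}\le\sigma(E)$. One small point to tighten: in the uniqueness step, writing $\E[(Z(E)-Z'(E))(Z(E)-Z'(E))^*]=0$ directly requires the cross-covariance $\E[Z(E)Z'(E)^*]$, which is not given a priori; the clean route is to observe that $g\mapsto\int g\,Z(\d\omega)$ and $g\mapsto\int g\,Z'(\d\omega)$ are both continuous linear maps on $L^2(\sigma)$ that agree on the dense subspace $\widehat{\Sc}=\Sc$, hence agree everywhere, and then evaluate at $g=1_E$. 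Also note that Definition~\ref{def:random-measure} only demands the covariance identity, so the additivity checks you list, while harmless, are not needed to conclude.
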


Note that, as of now, the stationary distribution $F$ is a functional whose domain is restricted to $\Sc$. In order to define the process $\ubf(t)$ via $F$, it is necessary to extend the definition of $F$ to a subclass of tempered distributions $\Sc'$. For this purpose, we employ the approach introduced in~\cite{didier2020asymptotic,mckinley2018anomalous}.
\begin{definition} \label{def:weak-operator} Let $\nu$ be a matrix--valued Borel measure satisfying conditions~\eqref{cond:weak-stationary-distribution:int.nu(d.omega)/(1+omega^2)^p<infty} and~\eqref{cond:weak-stationary-process:Hermite-positive-definite}. Let $Z$ be the vector--valued random measure associated with $\nu$ as in Definition~\ref{def:random-measure}. Further suppose that $\nu$ is absolutely continuous with respect to Lebesgue measure. Then, we define an operator $\Phi \, : \, \Sc' \to L^2(\Omega)^d$ by means of the mapping
\begin{equation} \label{eqn:weak-operator}
g \in \Sc' \mapsto \la \Phi,g\ra = \int_\rbb \F{g}(\omega)\cdot I_d\, Z(\d\omega).
\end{equation}
The domain of $\Phi$, denoted by $\emph{Dom}(\Phi)$, is the set of tempered distributions $g$ such that its Fourier transform $\F{g}$ in $\Sc'$ is a function defined on $\rbb$ and that $\F{g}\in L^2(\nu)$.
\end{definition}

In the following lemma we establish that the absolute continuity of $\nu$ with respect to Lebesgue measure is a sufficient condition for the extension of $\Phi$ as in Definition \ref{def:weak-operator} to be well defined. This extends analogous results for one--dimensional settings \cite{didier2020asymptotic,mckinley2018anomalous}.
\begin{lemma} \label{lem:Weak-op-defined} Let $\Phi:\emph{\text{Dom}}(\Phi)\subset \Sc'\to L^2(\Omega)^d$ be the operator as in Definition~\ref{def:weak-operator}. Then, $\Phi$ is well defined.
\end{lemma}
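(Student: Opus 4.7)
The plan is to show that, for each $g\in\text{Dom}(\Phi)$, the prescription~\eqref{eqn:weak-operator} produces a genuine element of $L^2(\Omega)^d$ that depends only on $g$. By the very definition of the domain, $\F{g}$ is a tempered distribution that coincides with an honest measurable function on $\rbb$, and that function belongs to $L^2(\nu)$. Once a measurable representative of $\F{g}$ is fixed, the stochastic integral on the right-hand side of~\eqref{eqn:weak-operator} is constructed component-wise as a scalar stochastic integral against the random measure $Z$ from Theorem~\ref{thm:weak-stationary-distribution:random-measure}; the isometry~\eqref{form:E|int.g(omega)Z(d.omega)|^2} guarantees that the result has finite second moment and thus defines a bona fide element of $L^2(\Omega)^d$.

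The main subtlety — and the only place where the absolute continuity hypothesis on $\nu$ enters — concerns the ambiguity in the choice of representative. As a tempered distribution that happens to be a function, $\F{g}$ is specified only up to equality Lebesgue-almost everywhere. The plan is therefore to argue that, if $f_1$ and $f_2$ are two measurable representatives of $\F{g}$, then $\{f_1\neq f_2\}$ is a Lebesgue-null set, and from $\nu\ll d\omega$ this set is also $\nu$-null. Applying the isometry~\eqref{form:E|int.g(omega)Z(d.omega)|^2} to the difference then yields
$$
\E\Big[\Big(\int_\rbb (f_1-f_2)(\omega)\cdot I_d\, Z(d\omega)\Big)\Big(\int_\rbb (f_1-f_2)(\omega)\cdot I_d\, Z(d\omega)\Big)^*\Big] = \int_\rbb (f_1-f_2)(\omega)\, \nu(d\omega)\, (f_1-f_2)(\omega)^* = 0,
$$
so the two candidate random vectors coincide $\P$-almost surely and $\langle\Phi,g\rangle$ is unambiguously defined.

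With this unambiguity in hand, linearity of $g\mapsto\langle\Phi,g\rangle$ on $\text{Dom}(\Phi)$ is immediate, since both the Fourier transform on $\Sc'$ and the stochastic integral against $Z$ are linear. The hardest step — modest though it is — is the absolute-continuity argument, which is exactly the hypothesis that prevents distinct Lebesgue representatives of $\F{g}$ from producing distinct random vectors; the rest is a direct application of the integration theory developed around~\eqref{form:E|int.g(omega)Z(d.omega)|^2}.
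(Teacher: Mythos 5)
Your proposal is correct and follows essentially the same route as the paper: both arguments reduce well-definedness to the observation that two measurable representatives of $\F{g}$ agree Lebesgue-a.e., hence $\nu$-a.e.\ by the absolute continuity of $\nu$, and then invoke the isometry~\eqref{form:E|int.g(omega)Z(d.omega)|^2} to conclude that the corresponding stochastic integrals coincide almost surely.
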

The proof of Lemma~\ref{lem:Weak-op-defined} is essentially the same as that of~\cite[Lemma 2.15]{mckinley2018anomalous}. Since the argument is short, we include it here for the sake of completeness.
\begin{proof}[Proof of Lemma~\ref{lem:Weak-op-defined}]
 By the absolutely continuity of $\nu$ with respect to Lebesgue measure, we may write $\nu(\d\omega)=f(\omega)\d\omega$. We proceed to show that the right-hand side of \eqref{eqn:weak-operator} does not depend on the choice of $\F{g}$. To see that, suppose $\mathcal{F}_1[g]$ and $\mathcal{F}_2[g]$ are Fourier transforms of $g$ in $\Sc'$. Then, $\mathcal{F}_1[g] = \mathcal{F}_2[g]$ a.e.\ \cite{strichartz2003guide}. In view of~\eqref{form:E|int.g(omega)Z(d.omega)|^2}, this implies that
\begin{equation}\label{eqn:Weak-op-defined-1}
\begin{aligned}
&\E\Big[\Big(\int_\rbb\mathcal{F}_1[g](\omega)\cdot I_d \hspace{0.5mm}Z(\d\omega)-\int_\rbb \mathcal{F}_2[g](\omega)\cdot I_d \hspace{0.5mm}Z(\d\omega)\Big)\\
&
\times \Big(\int_\rbb\mathcal{F}_1[g](\omega)\cdot I_d \hspace{0.5mm} Z(\d\omega)-\int_\rbb \mathcal{F}_2[g](\omega)\cdot I_d \hspace{0.5mm}Z(\d\omega)\Big)^*\Big]\\
&\qquad = \int_\rbb \Big|\mathcal{F}_1[g](\omega)-\mathcal{F}_2[g](\omega)\Big|^2 f(\omega)\d \omega =0.
\end{aligned}
\end{equation}
It follows that the random vectors $\int_\rbb \mathcal{F}_1[g](\omega)\cdot I_d \hspace{0.5mm}Z(\d\omega)$ and $\int_\rbb \mathcal{F}_2[g](\omega)\cdot I_d \hspace{0.5mm}Z(\d\omega)$ are equal a.s., implying that $\Phi$ is well defined. This finishes the proof.
\end{proof}

Having obtained the extension $\Phi$ of $F$ to $\Sc'$, we are now ready to define the process $\ubf(t)$ via the action of $\Phi$ on Dirac functions as in the following definition.

\begin{definition}[The function--valued version of a stationary random operator] \label{def:form:u(t)}
Let $\delta_t$ be the Dirac $\delta$ distribution centered at $t$. If $\delta_t\in\text{\emph{Dom}}(\Phi)$, then we define
\begin{equation} \label{form:u(t)}
\ubf(t):= \la \Phi,\delta_t\ra.
\end{equation}	
\end{definition}
\begin{remark} \label{rem:weak-oprator:point-process} We note that the condition $\delta_t\in\text{\emph{Dom}}(\Phi)$ in Definition~\ref{def:form:u(t)} is equivalent to the assumption $\nu_{ii}$ are all finite nonnegative measures. To see this, by Definition~\ref{def:weak-operator} together with~\eqref{form:E|int.g(omega)Z(d.omega)|^2} and \eqref{eqn:weak-operator}, it holds that
\begin{align*}
\E\Big[\int_\rbb \F{\delta_t}(\omega)\cdot I_d\, Z(\d \omega)\Big(\int_\rbb \F{\delta_t}(\omega)\cdot I_d\, Z(\d \omega)\Big)^*\Big]=\Big\|\int_\rbb \nu(\d\omega)\Big\|^2<\infty,
\end{align*}
where the last implication above is equivalent to $\sum_{i=1}^d\nu_{ii}(\rbb)<\infty,$ since $\nu\in{\mathcal H}_{\geq 0}(d,\bbC)$. In view of Theorem~\ref{thm:weak-stationary-process:characterization}, $\ubf(t)=\la \Phi,\delta_t\ra $ is thus simply the ordinary stochastic process version for $\Phi$ (cf.\ Lemma~\ref{lem:x(t)-v(t):continuous}).
\end{remark}

\section{Fourier analysis of the memory kernel $K(t)$} \label{sec:Kcos-Ksin}
In this section, we collect several useful properties of Fourier transforms for $K(t)$ under Assumption~\ref{cond:K:general}. More details can be found in~\cite{didier2020asymptotic,mckinley2018anomalous,soni1974parseval,soni1975slowly,
soni1975slowlyII}.

In the following lemma, we state the fact that the Fourier transform of $K$ under Assumption~\ref{cond:K:general} is well defined in the sense of improper integrals.
\begin{lemma}  \label{lem:Kcos-Ksin-welldefined}
Suppose that $K$ satisfies Assumption~\ref{cond:K:general} (I) (a) and (b). Then, for $\omega\neq 0$, the improper integrals $\Kcos(\omega)=\int_0^\infty K(t)\cos(t\omega) \d t$ and $\Ksin(\omega)=\int_0^\infty K(t)\sin(t\omega) \d t$ are well defined, continuous in $\omega$, and
 \begin{equation} \label{lim:Kcos-Ksin:omega->infinity}
 \lim_{\omega\to\infty}\Kcos(\omega)=\lim_{\omega\to\infty} \Ksin(\omega)=0.	
 \end{equation}
\end{lemma}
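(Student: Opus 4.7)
The plan is to split the improper integral into a bounded part and a tail on which the kernel is eventually monotone, then handle each piece by standard tools (dominated convergence / Riemann--Lebesgue on the bounded part, and the second mean value theorem for integrals on the tail). Fix $T>0$ large enough that $K$ is monotonically decreasing on $[T,\infty)$ (possible by Assumption~\ref{cond:K:general}~(I)~(b)); this $T$ is independent of $\omega$.

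For the head $\int_0^T K(t)\cos(\omega t)\,\d t$, local integrability (Assumption~\ref{cond:K:general}~(I)~(a)) makes the integral well defined. Continuity in $\omega$ follows from the Dominated Convergence Theorem with the dominating function $|K(t)| \mathbf{1}_{[0,T]}(t) \in L^1$, and the Riemann--Lebesgue lemma gives $\int_0^T K(t)\cos(\omega t)\,\d t \to 0$ as $|\omega|\to\infty$. The same holds with $\sin$ in place of $\cos$.

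For the tail, I would invoke Bonnet's second mean value theorem: since $K$ is nonnegative and decreasing on $[T,\infty)$ with $K(t)\to 0$, for every $R>T$ there exists $c=c(R,\omega)\in[T,R]$ such that
\begin{equation*}
\int_T^R K(t)\cos(\omega t)\,\d t \;=\; K(T)\int_T^{c}\cos(\omega t)\,\d t \;=\; \frac{K(T)}{\omega}\bigl(\sin(\omega c)-\sin(\omega T)\bigr),
\end{equation*}
so $\bigl|\int_T^R K(t)\cos(\omega t)\,\d t\bigr|\le 2K(T)/|\omega|$, and similarly for sine. The Cauchy criterion applied to $R\to\infty$ (comparing $\int_T^{R_1}$ and $\int_T^{R_2}$ via the same bound with $K(T)$ replaced by $K(R_1)$) then yields convergence of the improper tail integral for every $\omega\neq 0$. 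Moreover, the bound is uniform in $\omega$ on any set $\{|\omega|\ge\delta\}$, so the partial integrals converge \emph{uniformly} on such sets, which transfers continuity from the partial integrals to the limit. Combining with the continuity of the head gives continuity of $\Kcos(\omega)$ and $\Ksin(\omega)$ on $\bbR\setminus\{0\}$.

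Finally, for the limit \eqref{lim:Kcos-Ksin:omega->infinity}, given $\varepsilon>0$ I would first choose $T$ (in addition to the monotonicity requirement) such that $2K(T)<\varepsilon$ using $K(t)\to 0$; then the tail bound above gives $\bigl|\int_T^\infty K(t)\cos(\omega t)\,\d t\bigr|\le 2K(T)/|\omega|<\varepsilon$ for all $|\omega|\ge 1$. With this $T$ fixed, the head tends to $0$ as $|\omega|\to\infty$ by Riemann--Lebesgue, so $|\Kcos(\omega)|<2\varepsilon$ for all sufficiently large $|\omega|$; an identical argument handles $\Ksin$. The only mild subtlety is ensuring $T$ can be chosen to satisfy \emph{both} the monotonicity and smallness requirements simultaneously, which is immediate from Assumption~\ref{cond:K:general}~(I)~(b); no further obstacle arises.
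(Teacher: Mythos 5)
Your proof is correct and is essentially the standard argument (head--tail decomposition, Riemann--Lebesgue on the head, second mean value theorem and uniform convergence of the tail) that the paper relies on: the paper itself gives no details and simply defers to \cite[Lemma 2.18]{mckinley2018anomalous} and \cite[Lemma 1]{soni1975slowly}, which proceed along the same lines. All the hypotheses you use (local integrability for the head, eventual monotone decay to zero for the tail) are exactly those of Assumption~\ref{cond:K:general}~(I)~(a)--(b), and the uniform-in-$\omega$ tail bound on $\{|\omega|\ge\delta\}$ correctly delivers both continuity and the limit \eqref{lim:Kcos-Ksin:omega->infinity}.
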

\begin{proof} The proof is essentially the same as in \cite[Lemma 2.18]{mckinley2018anomalous}. See also \cite[Lemma 1]{soni1975slowly}.
\end{proof}

Next, we describe the Fourier transform of $K$ in the sense of distributions.
\begin{lemma}\label{lem:temper-distribution:Kcos} Suppose that $K$ satisfies~Assumption~\ref{cond:K:general}. Then, the following holds.
\begin{itemize}
\item [(a)] The Fourier transform of $K$ in the sense of tempered distributions is given by $2\Kcos$. In other words, for every $\f\in\Sc$,
\begin{align}\label{form:temper-distribution:Kcos}
\int_\rbb K(t)\widehat{\f}(t)\d t = \int_\rbb 2\Kcos(\omega)\f(\omega)\d\omega.
\end{align}

\item [(b)] For any $\varphi\in\Sc$, the Fourier transform of $K^+*\varphi$ in $\Sc'$ is given by
\begin{equation}\label{form:temper-distribution:K^+*phi}
\F{K^+*\varphi}(\omega) = \widehat{K^+}\cdot \widehat{\varphi}= \left(\Kcos(\omega)-\imag \Ksin(\omega)\right)\widehat{\varphi}(\omega),
\end{equation}
where  $K^+(t)=K(t)1_{[0,\infty)}(t)$.
\end{itemize}
\end{lemma}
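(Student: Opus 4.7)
My plan for (a) is a truncation plus dominated-convergence argument. First, Assumption~\ref{cond:K:general}(I) implies $K \in L^1_{loc}$ and $K(t) \to 0$ at infinity, so $K$ is bounded outside any neighborhood of $0$ and therefore defines a tempered distribution via $\la K, \psi\ra = \int_\rbb K\psi$ for $\psi \in \Sc$. Set $K_N(t) = K(t)\mathbf{1}_{[-N,N]}(t) \in L^1(\rbb)$; by evenness of $K$ its classical Fourier transform is $\widehat{K_N}(\omega) = 2\int_0^N K(t)\cos(t\omega)\,\d t$, and Fubini's theorem yields the Parseval-type identity
\begin{equation*}
\int_\rbb K_N(t) \widehat{\varphi}(t)\,\d t = \int_\rbb \widehat{K_N}(\omega)\, \varphi(\omega)\,\d\omega, \qquad \varphi \in \Sc.
\end{equation*}
The whole proof then reduces to justifying that this identity passes to the limit $N\to\infty$.

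On the left-hand side, $K\widehat{\varphi}$ is in $L^1$ (near $0$ because $K \in L^1_{loc}$ and $\widehat{\varphi}$ is bounded; at infinity because $K$ is bounded there and $\widehat{\varphi}$ is rapidly decreasing), so dominated convergence gives $\int K_N \widehat{\varphi} \to \int K\widehat{\varphi}$. On the right-hand side, Lemma~\ref{lem:Kcos-Ksin-welldefined} gives pointwise convergence $\widehat{K_N}(\omega) \to 2\Kcos(\omega)$ for $\omega \neq 0$, and I need a uniform integrable dominator for dominated convergence. Choosing $t_0$ such that $K$ is decreasing on $[t_0,\infty)$ (Assumption~\ref{cond:K:general}(I)(b)), the second mean value theorem applied to $\int_{t_0}^N K(t)\cos(t\omega)\,\d t$ yields
\begin{equation*}
|\widehat{K_N}(\omega)| \leq 2\int_0^{t_0} K(t)\,\d t + \frac{4 K(t_0)}{|\omega|}, \qquad \omega \neq 0, \;\; N \geq t_0,
\end{equation*}
which controls $\widehat{K_N}\varphi$ on every set $\{|\omega| \geq \delta\}$, $\delta > 0$.

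The hard part is the region $|\omega|<\delta$ near the origin, where my bound has a non-integrable $1/|\omega|$ singularity and $\Kcos$ itself may be unbounded. The fix is to combine the mean value estimate with Assumption~\ref{cond:K:general}(II), which prescribes the behavior of $\Kcos$ at $0$ in each case (bounded in (a), logarithmic in (b), of order $|\omega|^{\alpha-1}$ in (c)) and makes $\Kcos\varphi$ locally integrable. On $\{|\omega|<\delta\}$ I would replace the cutoff $K_N$ by the Abel regularization $K^{\varepsilon}(t) = K(t) e^{-\varepsilon|t|} \in L^1$: the Parseval identity is immediate for $K^{\varepsilon}$, $\widehat{K^\varepsilon}(\omega) \to 2\Kcos(\omega)$ pointwise as $\varepsilon \to 0$ by Abel's theorem, and the same second mean value bound is uniform in $\varepsilon$, so dominating $\widehat{K^\varepsilon}\varphi$ on a neighborhood of zero reduces to the case-by-case local integrability of $\Kcos$. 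Splicing the limits on $\{|\omega|<\delta\}$ and $\{|\omega|\geq\delta\}$ yields $\int K\widehat{\varphi} = \int 2\Kcos\varphi$ for every $\varphi \in \Sc$, which is the distributional identity $\widehat{K} = 2\Kcos$.

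For part (b), I would reduce to (a) plus the standard exchange formula in $\Sc'$. Since $K^+$ is in $L^1_{loc}$ and bounded at infinity, it is a tempered distribution and $K^+ * \varphi$ is a smooth function of polynomial growth; the convolution theorem $\F{K^+ * \varphi} = \widehat{K^+}\cdot\widehat{\varphi}$ follows by pairing both sides with a Schwartz function and applying Fubini to an $L^1$ truncation of $K^+$. The identification $\widehat{K^+} = \Kcos - \imag\Ksin$ is obtained by the same truncation strategy, now with $K^+_N = K\mathbf{1}_{[0,N]} \in L^1$, whose Fourier transforms $\int_0^N K(t) e^{-\imag t\omega}\,\d t$ converge pointwise for $\omega \neq 0$ to $\Kcos(\omega) - \imag\Ksin(\omega)$ by Lemma~\ref{lem:Kcos-Ksin-welldefined}, with the limit passing into $\Sc'$ via the same mean value/Abel argument. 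Alternatively, writing $K = K^+ + K^-$ with $K^-(t) = K^+(-t)$ (hence $\widehat{K^-}(\omega) = \widehat{K^+}(-\omega)$), part (a) yields $\widehat{K^+}(\omega) + \widehat{K^+}(-\omega) = 2\Kcos(\omega)$, and the analogous argument against $\sin$ pins down the odd part $\widehat{K^+}(\omega) - \widehat{K^+}(-\omega) = -2\imag\Ksin(\omega)$, completing the proof.
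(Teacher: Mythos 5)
Your overall strategy (truncate or Abel--regularize, apply Parseval/Fubini for the $L^1$ approximant, then pass to the limit by dominated convergence on both sides) is exactly the route taken in the references the paper defers to for this lemma, so the architecture is right. However, there is a genuine gap at the one place where the lemma is actually nontrivial: the uniform integrable domination of $\widehat{K_N}$ (or $\widehat{K^{\varepsilon}}$) near $\omega=0$. You correctly identify this as ``the hard part,'' but then resolve it by asserting that it ``reduces to the case-by-case local integrability of $\Kcos$.'' It does not: local integrability of the \emph{limit} function $\Kcos$ gives no control over the approximating family, and your only quantitative estimate, $|\widehat{K_N}(\omega)|\le 2\int_0^{t_0}K+4K(t_0)/|\omega|$, has a non-integrable singularity at the origin no matter which regularization you use (the same second--mean--value bound for $K^{\varepsilon}$ is just as singular). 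Switching from $K_N$ to $K^{\varepsilon}$ on $\{|\omega|<\delta\}$ therefore buys you nothing, and the proposed ``splicing'' of two different regularizations over two frequency regions is not literally meaningful anyway, since the two Parseval identities have different left-hand sides.

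The missing ingredient is a uniform Abelian estimate obtained by making the splitting point in the second mean value theorem depend on $\omega$: for $0<|\omega|$ small and $N\ge 1/|\omega|\ge t_0$, write $\int_0^N K(t)\cos(t\omega)\,\d t=\int_0^{1/|\omega|}+\int_{1/|\omega|}^{N}$, bound the first piece by $\int_0^{1/|\omega|}K(t)\,\d t$ and the second by $2K(1/|\omega|)/|\omega|$ via the second mean value theorem. Under Assumption~\ref{cond:K:general}~(II) both pieces are $O(1)$, $O(|\log|\omega||)$, or $O(|\omega|^{\alpha-1})$ in cases (a), (b), (c) respectively, uniformly in $N$, and each of these majorants is integrable against $\f$ near the origin. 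With that bound in hand, a single regularization suffices on all of $\rbb$ and the dominated convergence argument closes; the same $\omega$-dependent splitting also supplies the dominator for the sine transform needed in your (otherwise clean) parity argument for part (b). As written, though, the step you label as the reduction of the hard part is an assertion rather than a proof, and it is precisely the step the cited propositions in \cite{mckinley2018anomalous} and \cite{didier2020asymptotic} spend their effort on.
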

\begin{proof} (a) If $K$ is integrable then~\eqref{form:temper-distribution:Kcos} is a consequence of Fubini's theorem. When $K$ satisfies the tail behavior $t^{-1}$ (see Assumption~\ref{cond:K:general} (II) (b)), then the argument can be found in the proof of~\cite[Proposition 17]{didier2020asymptotic}. Finally, if $K$ satisfies Assumption~\ref{cond:K:general} (II) (c), the argument is the same as in the proof of \cite[Proposition 2.19 (a)]{mckinley2018anomalous}.

(b) The proof of~\eqref{form:temper-distribution:K^+*phi} is essentially the same as the proof of \cite[Proposition 2.19 (b)]{mckinley2018anomalous}.
\end{proof}

In the following result, we provide the asymptotic behavior of the functions $\Kcos$ and $\Ksin$ near the origin. These properties play an import role in establishing the asymptotic growth of $\int_0^t (x(s),v(s))\d s$ as $t \rightarrow \infty$ in Theorem~\ref{thm:asymptotic-growth}. See also~\cite{inoue1995abel,soni1975slowly,soni1975slowlyII} for related results.

\begin{lemma}[Abelian direction]\label{lem:Kcos-Ksin:omega->0} Suppose that $K\in L^1_{\text{loc}}(0,\infty)$ satisfies Assumption~\ref{cond:K:general}. Then, the following holds.
\begin{itemize}
\item [(a)] If $K$ is integrable, then
\begin{equation} \label{lim:Kcos-Ksin:omega->0:diffusion}
\Kcos(\omega)\to\int_0^\infty\close K(t)\d t\quad\text{and}\quad\Ksin(\omega)\to0\quad\text{as } \omega \to 0.
\end{equation}

\item [(b)] If $K(t)\sim t^{-1}$ as $t\to\infty$, then
\begin{equation} \label{lim:Kcos-Ksin:omega->0:critial}
\frac{\Kcos(\omega) }{|\log(\omega)|}\to c_1, \quad\text{and}\quad\Ksin(\omega) \to c_1 \hspace{1mm}\frac{\pi}{2},\quad\text{as }\omega \to 0,
\end{equation}
where $c_1=\lim_{t\to\infty}t\,K(t)\in(0,\infty)$.

\item [(c)] If there exists $\alpha\in(0,1)$ such that $K(t)\sim t^{-\alpha}$ as $t\to\infty$, then
\begin{equation} \label{lim:Kcos-Ksin:omega->0:subdiffusion}
\omega^{1-\alpha}\Kcos(\omega)\to c_\alpha\int_0^\infty \frac{\cos(u)}{u^\alpha}\d u \quad  \text{and}  \quad\omega^{1-\alpha}\Ksin(\omega)\to c_\alpha\int_0^\infty \frac{\sin(u)}{u^\alpha}\d u\quad\text{as }\omega\to 0,
\end{equation}
where $c_\alpha=\lim_{t\to\infty}t^{\alpha}\,K(t)\in(0,\infty)$.
\end{itemize}
\end{lemma}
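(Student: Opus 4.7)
My plan is to treat part (a) separately by a direct Dominated Convergence Theorem (DCT) argument, then handle parts (b) and (c) in a unified fashion via the rescaling substitution $u = \omega t$.

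Part (a) is immediate: since $K \in L^1(0,\infty)$ and $|\cos(\omega t)|, |\sin(\omega t)| \le 1$, DCT applied to the pointwise limits $\cos(\omega t) \to 1$ and $\sin(\omega t) \to 0$ as $\omega \to 0$ yields both assertions in \eqref{lim:Kcos-Ksin:omega->0:diffusion}.

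For parts (b) and (c), the substitution $u = \omega t$ rewrites
\begin{align*}
\Kcos(\omega) = \frac{1}{\omega}\int_0^\infty K(u/\omega) \cos(u)\, \d u, \qquad \Ksin(\omega) = \frac{1}{\omega}\int_0^\infty K(u/\omega) \sin(u)\, \d u,
\end{align*}
so that the tail information on $K$ at infinity translates into pointwise asymptotics of $K(u/\omega)$ at $\omega \to 0$: in case (c), $\omega^{-\alpha}K(u/\omega) \to c_\alpha u^{-\alpha}$, and in case (b), $\omega^{-1} u K(u/\omega) \to c_1$ for each fixed $u>0$. For case (c), I would split the integral into three regions $(0,\delta)$, $[\delta,M]$, $(M,\infty)$. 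On $[\delta,M]$, the regularly varying hypothesis $K(t)\sim c_\alpha t^{-\alpha}$ forces uniform convergence $\omega^{-\alpha}K(u/\omega) \to c_\alpha u^{-\alpha}$, so DCT yields the limit $c_\alpha \int_\delta^M u^{-\alpha}\cos(u)\,\d u$. The tail $\int_M^\infty$ is handled by one integration by parts, exploiting that $K$ is eventually decreasing (Assumption~\ref{cond:K:general}(I)(b)), so that the boundary and total-variation contributions are of order $\omega^\alpha M^{-\alpha}$. The small-$u$ piece $\int_0^\delta$ is bounded by $\omega^{-1}\int_0^{\delta/\omega} K(t)\,\d t$, which is $O(\omega^{\alpha-1}\delta^{1-\alpha})$ using $K(t)\sim c_\alpha t^{-\alpha}$. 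Multiplying by $\omega^{1-\alpha}$ and then sending first $\omega \to 0$, then $\delta \to 0$, $M \to \infty$, gives \eqref{lim:Kcos-Ksin:omega->0:subdiffusion}; the sine analogue is identical.

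Case (b) proceeds by the same scheme but with two key differences. First, the dominant contribution is now logarithmic: for the cosine transform, the intermediate region yields $c_1 \int_\omega^1 \cos(u)/u\, \d u = c_1 |\log \omega| + O(1)$, which is the source of the $|\log \omega|$ factor in \eqref{lim:Kcos-Ksin:omega->0:critial}. For the sine transform, the analogous intermediate contribution together with the $[1,\infty)$ piece converges to $c_1 \int_0^\infty \sin(u)/u\, \d u = c_1 \pi/2$ by the Dirichlet integral. Second, since $1/u$ is non-integrable at both endpoints, the error term $\int (u K(u/\omega)/\omega - c_1)\cos(u)/u\, \d u$ cannot be controlled by DCT alone; one must integrate by parts, using the eventual monotonicity of $K$ and the fact that $\varepsilon(t) := tK(t)-c_1 \to 0$, to show it is $o(|\log \omega|)$ in the cosine case and $o(1)$ in the sine case. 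This error estimate is the main obstacle of the proof. Alternatively, one may invoke the results of Soni--Soni \cite{soni1975slowly,soni1975slowlyII} on Fourier cosine and sine transforms of slowly varying functions, which essentially prove this lemma and are already cited by the paper for precisely this purpose.
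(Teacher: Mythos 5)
Your proof is correct in substance, but it is worth noting how it relates to what the paper actually does: for part (a) the paper's argument is identical to yours (Dominated Convergence), while for parts (b) and (c) the paper gives no direct argument at all --- it simply cites \cite[Proposition 9]{didier2020asymptotic} and \cite[Proposition 3.1]{mckinley2018anomalous}, which in turn rest on the Soni--Soni Abelian theorems. What you sketch (rescaling $u=\omega t$, uniform convergence of $\omega^{-\alpha}K(u/\omega)$ on compact subsets of $(0,\infty)$ by regular variation, second-mean-value/integration-by-parts control of the oscillatory tail via the eventual monotonicity of $K$, and a local-integrability bound near $u=0$) is precisely the content of those cited propositions, so your route is self-contained where the paper defers to references. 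Two caveats: first, your bound for the small-$u$ piece in case (c) carries a spurious factor of $\omega^{-1}$ (the substitution gives $\omega^{-1}\int_0^\delta K(u/\omega)\,\d u = \int_0^{\delta/\omega}K(t)\,\d t$, which after multiplication by $\omega^{1-\alpha}$ is $O(\delta^{1-\alpha})$, as you ultimately conclude); second, in case (b) the error estimate you correctly identify as the main obstacle is stated but not carried out --- for the cosine transform a crude absolute bound $|\,tK(t)-c_1|\le\eta(T_0)$ on $t\ge T_0$ already gives the required $o(|\log\omega|)$, whereas for the sine transform the $o(1)$ error genuinely needs the integration-by-parts step. Since you offer the Soni--Soni citation as a fallback for exactly this point, the argument as a whole is sound and matches the paper's (outsourced) proof.
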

\begin{proof}
The limit~\eqref{lim:Kcos-Ksin:omega->0:diffusion} is a consequence of the Dominated Convergence Theorem. The limits~\eqref{lim:Kcos-Ksin:omega->0:critial} and~\eqref{lim:Kcos-Ksin:omega->0:subdiffusion} can be found in~\cite[Proposition 9]{didier2020asymptotic} and \cite[Proposition 3.1]{mckinley2018anomalous}, respectively.
\end{proof}

\section{Completely Monotonic Functions} \label{sec:CM}

In this section, we discuss two important properties of completely monotonic functions that are needed in the calculation of the second moment of $(x(t),v(t))$ (see Theorems~\ref{thm:equipartition-of-energy:1D:gamma=0:power-law.exponential} and~\ref{thm:equipartition-of-energy:2D:gamma>0:powerlaw.exponential}). First, we recall the following well--known theorem on the representation of the class $\CM$ in terms of Laplace transforms of Radon measures.
\begin{theorem}[Hausdorff--Bernstein--Widder Theorem]\label{thm:completely-monotone:Bernstein} A function $K$ is completely monotone as in Definition~\ref{def:complete-monotone} if and only if $K$ admits the formula
\begin{equation} \label{eq:complete-monotone:laplace}
K(t)=\int_0^\infty \!\!\! e^{-tx} \mu (\d x  ),
\end{equation}
for some positive Borel measure $\mu$ on $[0,\infty)$.
\end{theorem}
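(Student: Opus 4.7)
The plan is to establish the two implications separately. The direction $(\Leftarrow)$ is a routine differentiation-under-the-integral argument, while $(\Rightarrow)$ requires the machinery of Post's inversion formula combined with a weak compactness argument.

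For $(\Leftarrow)$, I would assume $K(t)=\int_0^\infty e^{-tx}\mu(\d x)$ for some positive Borel measure $\mu$ on $[0,\infty)$. Fixing $t_0>0$ and $n\ge 0$, on a small neighborhood $[t_0/2,\,2t_0]$ of $t_0$ the integrand $x^n e^{-tx}$ is dominated by $x^n e^{-(t_0/2)x}$, which is $\mu$-integrable because the elementary estimate $x^n\le C(n,t_0)\,e^{(t_0/4)x}$ reduces its integral to a multiple of $K(t_0/4)<\infty$. Thus dominated convergence justifies repeated differentiation under the integral sign, yielding $K\in C^\infty(0,\infty)$ and $(-1)^nK^{(n)}(t)=\int_0^\infty x^n e^{-tx}\mu(\d x)\ge 0$ for all $n\ge 0$ and $t>0$.

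For $(\Rightarrow)$, I would follow Widder's classical approach via Post's inversion formula. Given a completely monotone $K$, define the sequence of nonnegative Borel measures on $(0,\infty)$
\[ \mu_n(\d x) = \frac{(-1)^n}{n!}\Big(\frac{n}{x}\Big)^{n+1} K^{(n)}\Big(\frac{n}{x}\Big)\,\d x, \qquad n\ge 1. \]
Nonnegativity of $\mu_n$ is precisely the hypothesis of complete monotonicity. The next step is to establish a uniform local mass bound of the form $\mu_n([a,b])\le C(a,b)$ for $0<a<b<\infty$, obtained by repeated integration by parts together with the alternating-sign monotonicity of the derivatives $(-1)^kK^{(k)}$ of a $\CM$ function. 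Helly's selection theorem, applied to an exhausting sequence of compact subsets of $(0,\infty)$ together with a diagonal extraction, then produces a subsequence $\mu_{n_j}$ converging vaguely to a positive Radon measure $\mu$ on $[0,\infty)$.

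It remains to identify $\int_0^\infty e^{-tx}\mu(\d x)$ with $K(t)$. For fixed $t>0$, the change of variables $u=n/x$ together with standard Gamma function asymptotics shows $\int_0^\infty e^{-tx}\mu_n(\d x)\to K(t)$ as $n\to\infty$; this is essentially Post's inversion formula in disguise. Simultaneously, vague convergence together with the uniform mass bound allows one to pass to the limit $\int_0^\infty e^{-tx}\mu_{n_j}(\d x)\to \int_0^\infty e^{-tx}\mu(\d x)$, and equating the two limits yields the desired representation. The main technical obstacle is the uniform mass control needed both for Helly selection and for the interchange of limit and integration, particularly in handling the tails near $x=0$ and $x=\infty$ uniformly in $n$; this is where complete monotonicity enters at full strength, through iterated integration by parts applied to Post's quotient.
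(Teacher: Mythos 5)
The paper itself offers no proof of this statement: Theorem~\ref{thm:completely-monotone:Bernstein} is the classical Hausdorff--Bernstein--Widder theorem and is simply recalled from the literature, so there is no in-paper argument to measure your proposal against; I can only assess the proposal on its own terms.

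Your direction $(\Leftarrow)$ is complete and correct: the domination $x^n e^{-tx}\le x^n e^{-(t_0/2)x}\le C(n,t_0)e^{-(t_0/4)x}$ for $t\in[t_0/2,2t_0]$, with $\mu$-integrability supplied by $K(t_0/4)<\infty$, is exactly what is needed to differentiate under the integral. The direction $(\Rightarrow)$ follows the standard Widder/Feller architecture (nonnegative approximating measures built from Post's quotient, uniform local mass bounds, Helly selection, passage to the limit), but the decisive analytic step is asserted rather than proved, and the way you assert it is circular: the convergence $\int_0^\infty e^{-tx}\mu_n(\d x)\to K(t)$ cannot be delegated to ``Post's inversion formula,'' since that formula presupposes that the function being inverted already \emph{is} a Laplace transform --- precisely what you are trying to establish. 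In a complete proof this convergence must be extracted from complete monotonicity alone, via the Taylor expansion of $K$ about a point $a>t$ with integral remainder: the sign conditions force every term and the remainder to be nonnegative and bounded by $K(t)$, yielding in particular $(-1)^nK^{(n)}(a)\le n!\,K(t)/(a-t)^n$ for $0<t<a$, and the remainder, suitably reparametrized, exhibits the Gamma-type kernel whose concentration gives the limit. Those same estimates are what furnish the uniform local mass bounds needed for Helly selection; note that the naive pointwise bound $(-1)^nK^{(n)}(u)\le n!\,K((1-\epsilon)u)/(\epsilon u)^n$ produces a factor $\epsilon^{-n}$ and so does \emph{not} by itself give a bound on $\mu_n([a,b])$ uniform in $n$ --- the clean route is to first control $\sup_n\int_0^\infty e^{-tx}\mu_n(\d x)$ and deduce $\mu_n([0,b])\le e^{tb}\sup_n\int e^{-tx}\d\mu_n$, so the mass bound and the convergence are intertwined rather than independent steps as your outline suggests. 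The strategy is viable and classical, but the heart of the hard direction is currently a placeholder; for the purposes of this paper it would be equally legitimate to do what the authors do and cite the result.
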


In Lemma~\ref{lem:complete-monotone:Kcos-Ksin}, stated and proven next, we compute Fourier transforms of completely monotonic functions based on their representation measures.
\begin{lemma} \label{lem:complete-monotone:Kcos-Ksin}
Suppose that $K\in\CM$ and that $K$ is locally integrable and is decreasing to $0$ as $t\to\infty$. Let $\mu$ be the representation measure as in~\eqref{eq:complete-monotone:laplace}. Then for every $\omega\neq 0$, we can write
\begin{equation} \label{form:complete-monotone:Kcos-Ksin}
\Kcos(\omega)\pm \imag\Ksin(\omega) = \int_0^\infty\close \frac{x\pm \imag \omega}{x^2+\omega^2}\mu( \d x  ).
\end{equation}
\end{lemma}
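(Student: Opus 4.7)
My approach is to substitute the Bernstein representation $K(t) = \int_0^\infty e^{-tx}\mu(dx)$ from Theorem~\ref{thm:completely-monotone:Bernstein} into the complex improper integral $\int_0^\infty K(t) e^{\imag\omega t} dt$, interchange the order of integration, and evaluate the inner $dt$-integral explicitly. Because that $dt$-integral is only conditionally convergent, I would first truncate at $t = N$ and then pass to the limit $N \to \infty$, using Lemma~\ref{lem:Kcos-Ksin-welldefined} to identify the limit with $\Kcos(\omega) + \imag \Ksin(\omega)$. The $-$ case of~\eqref{form:complete-monotone:Kcos-Ksin} then follows at once by taking complex conjugates, since $K$, $\mu$, $\Kcos$, $\Ksin$ are all real.

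Concretely, set $I_N(\omega) := \int_0^N K(t) e^{\imag\omega t} dt$. Fubini's theorem applies to the double integral $\int_0^N \int_0^\infty e^{-tx}e^{\imag\omega t}\mu(dx)dt$ because the modulus of the integrand integrated in both orders equals $\int_0^N K(t)dt < \infty$ by local integrability. Observing that $\mu(\{0\}) = \lim_{t\to\infty} K(t) = 0$ by monotone convergence, and evaluating $\int_0^N e^{(-x+\imag\omega)t}dt = (1 - e^{-Nx}e^{\imag N\omega})/(x - \imag\omega)$ for $x > 0$, I obtain
$$I_N(\omega) = J(\omega) - R_N(\omega), \quad J(\omega) := \int_0^\infty \frac{x + \imag\omega}{x^2 + \omega^2}\mu(dx), \quad R_N(\omega) := \int_0^\infty e^{-Nx}e^{\imag N\omega}\frac{x + \imag\omega}{x^2 + \omega^2}\mu(dx).$$
The goal is then to show that $J(\omega)$ is absolutely convergent and that $R_N(\omega) \to 0$ as $N \to \infty$, both of which reduce to verifying $M := \int_0^\infty \mu(dx)/\sqrt{x^2+\omega^2} < \infty$.

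The main obstacle is this bound $M < \infty$, which requires extracting integrability information about $\mu$ from the hypotheses on $K$. I would split into $[0,1]$ and $[1,\infty)$. On $[0,1]$, complete monotonicity makes $K$ nonincreasing, so $K(1)$ is finite, and $K(1) = \int_0^\infty e^{-x}\mu(dx) \geq e^{-1}\mu([0,1])$ gives $\mu([0,1]) < \infty$; combined with $1/\sqrt{x^2+\omega^2} \leq 1/|\omega|$, this handles the near-origin piece. For the tail, Tonelli yields the identity $\int_0^1 K(t)dt = \int_0^\infty (1 - e^{-x})\mu(dx)/x$, and the elementary bound $(1-e^{-x})/x \geq (1 - e^{-1})/x$ for $x \geq 1$ gives $\int_1^\infty \mu(dx)/x < \infty$; since $1/\sqrt{x^2+\omega^2} \leq 1/x$ there, the tail is controlled. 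Once $M < \infty$ is in hand, dominated convergence with majorant $\mu(dx)/\sqrt{x^2+\omega^2}$ produces $|R_N(\omega)| \leq \int_0^\infty e^{-Nx}\mu(dx)/\sqrt{x^2+\omega^2} \to 0$ as $N \to \infty$, completing the argument.
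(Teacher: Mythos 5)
Your proposal is correct and follows essentially the same route as the paper's proof: truncate at finite time, apply Fubini, evaluate the inner exponential integral using $\mu(\{0\})=0$, and pass to the limit by dominated convergence after establishing $\mu([0,1])<\infty$ and $\int_1^\infty x^{-1}\mu(\d x)<\infty$ from the local integrability of $K$. The only cosmetic difference is that you bound $\mu([0,1])$ via $K(1)<\infty$ rather than via $\int_0^1 K(t)\,\d t<\infty$, and you obtain the $-$ case by conjugation rather than by repeating the argument; both are fine.
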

The proof of Lemma \ref{lem:complete-monotone:Kcos-Ksin} is essentially the same as that of \cite[Lemma 3.8]{mckinley2020h}. The only difference is that in \cite[Lemma 3.8]{mckinley2020h}, $K$ belongs to $\CM_b$, the class of completely monotone functions such that $K(0)$ is finite, whereas in Lemma~\ref{lem:complete-monotone:Kcos-Ksin}, we assume a slightly more general condition, namely, $K$ being locally integrable around the origin.
\begin{proof}[Proof of Lemma~\ref{lem:complete-monotone:Kcos-Ksin}] First note that, for all $\omega\neq 0$, the integrals in~\eqref{form:complete-monotone:Kcos-Ksin} are finite. Indeed, since $K$ is locally integrable, Fubini's theorem implies that
\begin{align*}
\int_0^1 K(t)\d t&=\int_0^1\int_0^\infty\close e^{-xt}\mu(\d x)\d t=\int_0^\infty \frac{1-e^{-x}}{x}\mu(\d x)<\infty.
\end{align*}
In particular,
\begin{align} \label{ineq:mu(0,1)<infty}
\mu([0,1])\le e\int_0^1\frac{1-e^{-x}}{x}&\mu(\d x)\le e\int_0^\infty\frac{1-e^{-x}}{x}\mu(\d x)<\infty,
\end{align}
and
\begin{equation}\label{ineq:int_1^infty 1/x.mu(dx)<infty}
\int_1^\infty\frac{1}{x}\mu(\d x)<\frac{1}{1-e^{-1}}\int_1^\infty \frac{1-e^{-x}}{x}\mu(\d x)<\infty.
\end{equation}
It follows that
\begin{align*}
\int_0^\infty\close \frac{x}{x^2+\omega^2}\mu( \d x  )=\Big\{\int_0^1+\int_1^\infty\Big\}\frac{x}{x^2+\omega^2}\mu( \d x  ) \le \frac{1}{\omega^2}\mu([0,1])+\int_1^\infty \frac{1}{x}\mu(\d x)<\infty.
\end{align*}
Likewise,
\begin{align*}
\int_0^\infty\close \frac{\omega}{x^2+\omega^2}\mu( \d x  )\le\frac{1}{\omega}\mu([0,1])+\omega\int_1^\infty \frac{1}{x^2}\mu(\d x)<\infty.
\end{align*}
Now, by the definition of improper integral,
\begin{align*}
\Kcos(\omega)-\imag \Ksin(\omega):= \lim_{A\to\infty}\int_0^A\close  K(t)e^{-\imag t\omega}\d t.
\end{align*}
Based on the representation $K(t)=\int_0^\infty e^{-tx}\mu( \d x    )$ (see \eqref{eq:complete-monotone:laplace}) and on Fubini's theorem, we obtain
\begin{align*}
\int_0^A\close K(t)e^{-\imag t\omega}\d t &= \int_0^A \close\int_0^\infty\close e^{-tx}\mu( \d x    ) e^{-\imag t\omega}\d t \\
&= \int_0^\infty \close\int_0^A\close e^{-(x+\imag \omega)t}\d t \mu( \d x    )\\
&= \int_0^\infty\frac{1-e^{-(x+\imag \omega)A}}{x+\imag \omega}\mu( \d x    )\\
&= \int_0^\infty \frac{\left(1-e^{-(x+\imag \omega)A}\right)x}{x^2+\omega^2}\mu( \d x    )-\imag \int_0^\infty \frac{\left(1-e^{-(x+\imag \omega)A}\right)\omega}{x^2+\omega^2}\mu( \d x    ).
\end{align*}
Also, since $K(t)$ decreases to 0 as $t\to\infty$, the Dominated Convergence Theorem implies that
\begin{equation} \label{eqn:mu(0)=0}
\mu(\{0\})=\lim_{t\to\infty}\int_0^\infty\close e^{-tx}\mu(\d x )=\lim_{t\to\infty}K(t)=0.
\end{equation}
It follows that $\mu-$a.e.\ on $x\in[0,\infty)$,
\begin{align*}
\lim_{A\to\infty}\frac{\left(1-e^{-(x+\imag \omega)A}\right)x}{x^2+\omega^2}=\frac{x}{x^2+\omega^2},\quad\text{and}\quad\lim_{A\to\infty}\frac{\left(1-e^{-(x+\imag \omega)A}\right)\omega}{x^2+\omega^2}=\frac{\omega}{x^2+\omega^2}.
\end{align*}
Again by the Dominated Convergence Theorem, we obtain
\begin{align*}
\lim_{A\to\infty}\Big[\int_0^\infty \frac{\left(1-e^{-(x+\imag \omega)A}\right)x}{x^2+\omega^2}\mu( \d x    )-&\imag \int_0^\infty \frac{\left(1-e^{-(x+\imag \omega)A}\right)\omega}{x^2+\omega^2}\mu( \d x    )\Big]\\
=\int_0^\infty\!\! \frac{x}{x^2+\omega^2}\mu( \d x    )-&\imag \int_0^\infty \close\frac{\omega}{x^2+\omega^2}\mu( \d x    )= \int_0^\infty \close\frac{x-\imag \omega}{x^2+\omega^2}\mu( \d x    ).
\end{align*}
This establishes~\eqref{form:complete-monotone:Kcos-Ksin} for $\Kcos(\omega)-\imag\Ksin(\omega)$. The formula for $\Kcos(\omega)+\imag\Ksin(\omega)$ can be derived using a similar argument.
\end{proof}

Next, we consider the case $K=\f(t^2)$, where $\f\in\CM$. Unlike in the situation where $K\in\CM$, computing the Fourier transform of $K=\f(t^2)$ is more complicated since it relies on delicate estimates for the error functions $\erf$ and $\erfc$ as well as for the Faddeeva function $w$ introduced in~\eqref{e:erfc}-\eqref{form:w(z)}.

\begin{lemma} \label{lem:complete-monotone:Kcos-Ksin:K(t)=phi(t^2)}
Suppose that $K(t)=\f(t^2)$, where $\f\in\CM$. Also suppose that $K$ is locally integrable and decreases to $0$ as $t\to\infty$. Let $\mu$ be the representation measure for $\f\in\CM$ as in~\eqref{eq:complete-monotone:laplace}. Then, for every $\omega\neq 0$, we can write
\begin{equation} \label{form:complete-monotone:Kcos-Ksin:K(t)=phi(t^2)}
\begin{aligned}
\Kcos(\omega) \pm \imag \Ksin(\omega) & =\frac{\sqrt{\pi}}{2}\int_0^\infty \close\frac{ 1}{\sqrt{x}} e^{-\frac{\omega^2}{4x}} \erfc\Big(\mp \imag\frac{ \omega}{2\sqrt{x}}\Big)\mu(\d x)\\
& = \frac{\sqrt{\pi}}{2}\int_0^\infty\close\frac{1}{\sqrt{x}} w\Big(\pm\frac{\omega}{2\sqrt{x}}\Big)\mu(\d x).
\end{aligned}
\end{equation}
\end{lemma}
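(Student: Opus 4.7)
The plan is to mimic closely the proof of Lemma~\ref{lem:complete-monotone:Kcos-Ksin}, using the representation $K(t)=\varphi(t^2)=\int_0^\infty e^{-t^2x}\mu(\d x)$ in place of the Laplace representation of $K$ itself. First, I would establish that both sides of~\eqref{form:complete-monotone:Kcos-Ksin:K(t)=phi(t^2)} are well defined. The local integrability of $K$ together with Fubini gives
\begin{equation*}
\int_0^1 K(t)\d t=\int_0^\infty\frac{\sqrt{\pi}}{2\sqrt{x}}\erf(\sqrt{x})\,\mu(\d x)<\infty,
\end{equation*}
from which one deduces the finiteness of $\mu([0,1])$ as well as
\begin{equation}\label{ineq:int_1^infty 1/sqrt(x).mu(dx)<infty}
\int_1^\infty\frac{1}{\sqrt{x}}\mu(\d x)<\infty,
\end{equation}
in analogy with the bounds~\eqref{ineq:mu(0,1)<infty}--\eqref{ineq:int_1^infty 1/x.mu(dx)<infty}. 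These bounds, combined with the entirety of $w$ together with the asymptotic estimate $|w(z)|\le c/|z|$ in the relevant sector (cf.\ Lemma~\ref{lem:w(z)}), show that the $\mu$-integrals on the right-hand side of~\eqref{form:complete-monotone:Kcos-Ksin:K(t)=phi(t^2)} are absolutely convergent.

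Next, using $\Kcos(\omega)-\imag\Ksin(\omega)=\lim_{A\to\infty}\int_0^A K(t)e^{-\imag t\omega}\d t$ and Fubini, I would write
\begin{equation*}
\int_0^A K(t)e^{-\imag t\omega}\d t=\int_0^\infty\!\Big(\int_0^A e^{-xt^2-\imag \omega t}\d t\Big)\mu(\d x).
\end{equation*}
The inner integral is handled by completing the square, $-xt^2-\imag \omega t=-x\bigl(t+\tfrac{\imag \omega}{2x}\bigr)^2-\tfrac{\omega^2}{4x}$, followed by the change of variable $u=\sqrt{x}\,t+\tfrac{\imag \omega}{2\sqrt{x}}$. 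This yields
\begin{equation*}
\int_0^A e^{-xt^2-\imag \omega t}\d t=\frac{e^{-\omega^2/(4x)}}{\sqrt{x}}\int_{\imag \omega/(2\sqrt{x})}^{\sqrt{x}\,A+\imag \omega/(2\sqrt{x})}\!\!e^{-u^2}\d u.
\end{equation*}
Since $e^{-u^2}$ is entire, the contour integral above equals $\tfrac{\sqrt{\pi}}{2}\bigl[\erf(\sqrt{x}\,A+\tfrac{\imag \omega}{2\sqrt{x}})-\erf(\tfrac{\imag \omega}{2\sqrt{x}})\bigr]$, and as $A\to\infty$ the first error function tends to $1$, giving $\tfrac{\sqrt{\pi}}{2}\erfc(\tfrac{\imag \omega}{2\sqrt{x}})$.

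To finish, I would apply the Dominated Convergence Theorem to interchange the limit $A\to\infty$ with the $\mu$-integral, obtaining the first equality in~\eqref{form:complete-monotone:Kcos-Ksin:K(t)=phi(t^2)} for the $-\imag \Ksin$ case; the $+\imag \Ksin$ case follows by conjugation (or by the analogous calculation with $+\imag \omega$). The second equality is then immediate from the identity $w(z)=e^{-z^2}\erfc(-\imag z)$ applied at $z=\mp\tfrac{\omega}{2\sqrt{x}}$. The main obstacle is justifying the Dominated Convergence step: this requires a uniform-in-$A$ bound on $\bigl|\tfrac{e^{-\omega^2/(4x)}}{\sqrt{x}}\int_{\imag \omega/(2\sqrt{x})}^{\sqrt{x}A+\imag \omega/(2\sqrt{x})}e^{-u^2}\d u\bigr|$ that is $\mu$-integrable on $[0,\infty)$, for which one splits into $x\in[0,1]$ (where $\mu$ is finite and the integrand is bounded) and $x\in[1,\infty)$ (where one exploits~\eqref{ineq:int_1^infty 1/sqrt(x).mu(dx)<infty} together with the uniform bound $|\erf(\sqrt{x}A+\tfrac{\imag \omega}{2\sqrt{x}})|\le C$, valid since the imaginary offset stays bounded on $x\ge 1$).
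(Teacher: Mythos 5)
Your proposal is correct and follows essentially the same route as the paper: the Gaussian--Laplace representation of $K$, Fubini, completing the square, a rectangular contour deformation, and dominated convergence with the split of the $\mu$--integral at $x=1$. The only point you assert rather than verify is the uniform-in-$A$ boundedness of the integrand on $x\in(0,1]$ (the contour integral itself grows like $e^{\omega^2/(4x)}$ as $x\to 0$, and only its product with the prefactor $e^{-\omega^2/(4x)}/\sqrt{x}$ stays bounded); the paper settles exactly this point via the Dawson-function estimate $\daw(|t|)\le C/|t|$.
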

\begin{proof} We will prove formula~\eqref{form:complete-monotone:Kcos-Ksin:K(t)=phi(t^2)} for $\Kcos(\omega)-\imag\Ksin(\omega)$. The formula for $\Kcos(\omega)+\imag\Ksin(\omega)$ can be derived using a similar argument.

In view of the expression~\eqref{eq:complete-monotone:laplace} for $\f\in\CM$, $K(t)$ admits the representation
\begin{align}\label{e:K(t)=int_exp(-t^2x)mu(dx)}
K(t)=\f(t^2)=\int_0^\infty\close e^{-t^2 x}\mu(\d x) \geq 0.
\end{align}
For $\mu(dx)$ as in \eqref{e:K(t)=int_exp(-t^2x)mu(dx)}, we claim that
\begin{equation}\label{ineq:int_1^infty 1/sqrt(x).mu(dx)<infty}
\int_1^\infty\close \frac{1}{\sqrt{x}}\mu(\d x)<\infty.
\end{equation}
To see this, first note that, due to the local integrability of $K$,
\begin{align}\label{e:infty>int^1_0_K(t)dt}
\infty>\int_0^1 K(t)\d t=\int_0^1\int_0^\infty\close e^{-t^2 x}\mu(\d x)\d t&=\int_0^\infty\close\frac{1}{\sqrt{x}} \int_0^{\sqrt{x}}\close e^{-t^2}\d t\,\mu(\d x),
\end{align}
where the second equality in \eqref{e:infty>int^1_0_K(t)dt} follows from a change of variable. Also,
\begin{align*}
\int_0^\infty\close\frac{1}{\sqrt{x}} \Big\{ \int_0^{\sqrt{x}}\close e^{-t^2}\d t\Big\}\,\mu(\d x)
&\geq \int_1^\infty\close\frac{1}{\sqrt{x}} \Big\{\int_0^{\sqrt{x}}\close e^{-t^2}\d t\Big\}\,\mu(\d x)\ge \int_1^\infty\close \frac{1}{\sqrt{x}}\mu(\d x)\cdot \int_0^1\close e^{-t^2}\d t,
\end{align*}
which proves~\eqref{ineq:int_1^infty 1/sqrt(x).mu(dx)<infty}.

Now fix $A>0$ and let $\omega \neq 0$. Fubini's Theorem and a change of variable imply that
\begin{align}\label{e:int^A_0_K(t)_exp(-iwt)dt}
\int_0^A\close K(t)e^{-\imag \omega t}\d t&=\int_0^\infty\close \int_0^A  \close e^{-t^2 x-\imag \omega t}\d t\,\mu(\d x)=\int_0^\infty \frac{e^{-\frac{\omega^2}{4x}}}{\sqrt{x}}\int_{\imag\frac{\omega}{2\sqrt{x}}}^{A\sqrt{x}+\imag
\frac{\omega}{2\sqrt{x}}}\close e^{-z^2}\d z\,\mu(\d x).
\end{align}
When considering the limit $A \to \infty$, we want to apply the Dominated Convergence Theorem in expression \eqref{e:int^A_0_K(t)_exp(-iwt)dt} so as to establish formula~\eqref{form:complete-monotone:Kcos-Ksin:K(t)=phi(t^2)}. To this end, it suffices to find a dominating $\mu-$integrable function for the family of integrands $\frac{e^{-\frac{\omega^2}{4x}}}{\sqrt{x}}\int_{\imag\frac{\omega}{2\sqrt{x}}}^{A\sqrt{x}+\imag
\frac{\omega}{2\sqrt{x}}}e^{-z^2}\d z$, $A > 0$.

We consider the contour integral on the rectangle curve
$$
D_1:0\dashrightarrow\imag\frac{\omega}{2\sqrt{x}}\dashrightarrow A\sqrt{x}+\imag
\frac{\omega}{2\sqrt{x}}\dashrightarrow A\sqrt{x}\dashrightarrow 0.
$$
By the analyticity of $e^{-z^2}$, $\int_{D_1} e^{-z^2}\d z=0$, whence
\begin{align}\label{e:int_D1_e^(-z^2)dz=0}
\int_{\imag\frac{\omega}{2\sqrt{x}}}^{A\sqrt{x}+\imag
\frac{\omega}{2\sqrt{x}}}\close e^{-z^2}\d z=\int_0^{A\sqrt{x}}\close e^{-t^2}\d t- \int_0^{\imag\frac{\omega}{2\sqrt{x}}}\close e^{-z^2}\d z+\int_{A\sqrt{x}}^{A\sqrt{x}+\imag\frac{\omega}{2\sqrt{x}}}\close e^{-z^2}\d z.
\end{align}
By making the changes of variable $z=\imag t$ and $z=A\sqrt{x}+\imag t$ in the second and last terms on the right-hand side of \eqref{e:int_D1_e^(-z^2)dz=0}, we obtain
\begin{align*}
\int_{\imag\frac{\omega}{2\sqrt{x}}}^{A\sqrt{x}+\imag
\frac{\omega}{2\sqrt{x}}}\close e^{-z^2}\d z=\int_0^{A\sqrt{x}}\close e^{-t^2}\d t-\imag \int_0^{\frac{\omega}{2\sqrt{x}}}\close e^{t^2}\d t+\imag \int_0^{\frac{\omega}{2\sqrt{x}}}\close e^{-(A\sqrt{x}+\imag t)^2}\d t.
\end{align*}
It follows that
\begin{align*}
\Big|\int_{\imag\frac{\omega}{2\sqrt{x}}}^{A\sqrt{x}+\imag
\frac{\omega}{2\sqrt{x}}}\close e^{-z^2}\d z\Big|&\le \frac{\sqrt{\pi}}{2}+2\int_0^{\frac{|\omega|}{2\sqrt{x}}}\close e^{t^2}\d t.
\end{align*}
It remains to show that
\begin{align}\label{e:int_1/sqrt(x)_exp_mu(dx)}
\int_0^\infty\close \frac{1}{\sqrt{x}}e^{-\frac{\omega^2}{4x}}\mu(\d x)<\infty
\end{align}
and
\begin{align}\label{e:int_(1/sqrt(x))_daw_mu(dx)}
0 \leq \int_0^\infty \close\frac{1}{\sqrt{x}}\daw\Big(\frac{|\omega|}{2\sqrt{x}}\Big)\mu(\d x)<\infty.
\end{align}
To show \eqref{e:int_1/sqrt(x)_exp_mu(dx)}, we employ~\eqref{ineq:int_1^infty 1/sqrt(x).mu(dx)<infty} and the elementary bound $e^{-t}<1/t$ for all $t>0$ to construct the estimate
\begin{align*}
\int_0^\infty\close \frac{1}{\sqrt{x}}e^{-\frac{\omega^2}{4x}}\mu(\d x)\le \frac{4}{\omega^2}\int_0^1\close\sqrt{x}\mu(\d x)+\int_1^\infty\close\frac{1}{\sqrt{x}}\mu(\d x)<\infty.
\end{align*}
In turn, to show \eqref{e:int_(1/sqrt(x))_daw_mu(dx)}, note that, for all real $t$, $\daw(t)$ as in~\eqref{form:dawson(z)} satisfies \cite[Section 7.8]{olver2010nist}
\begin{equation} \label{ineq:daw(t)<C/t}
\daw(|t|)\le \frac{C}{|t|}.
\end{equation}
Therefore,
\begin{align*}
\int_0^\infty\close \frac{1}{\sqrt{x}}\daw\Big(\frac{|\omega|}{2\sqrt{x}}\Big)\mu(\d x)\le \frac{c}{|\omega|}\mu([0,1])+c\int_1^\infty\close\frac{1}{\sqrt{x}}\mu(\d x)<\infty.
\end{align*}
This concludes the proof.
\end{proof}

We finish this section by the establishing the following useful estimate on $w(z)$. The result is employed in Section~\ref{sec:proofs} in establishing the equipartition of energy condition.
\begin{lemma} \label{lem:w(z)} Let $w(z)$ be the Faddeeva function as in~\eqref{form:w(z)}. For all $z=r e^{\imag\theta}$, $-\pi/8<\theta<9\pi/8$ and sufficiently large $r$,
\begin{equation} \label{ineq:w(z)}
|w(z)|\le \frac{C}{|z|}.
\end{equation}
\end{lemma}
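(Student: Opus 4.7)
The plan is to reduce the bound to the classical asymptotic expansion of the complementary error function. Recall that, by definition, $w(z)=e^{-z^2}\erfc(-\imag z)$. It is a standard fact (see, e.g., \cite[Section 7.12]{olver2010nist}) that, as $\zeta\to\infty$ in the sector $|\arg\zeta|<3\pi/4$,
\begin{equation}\label{e:erfc_asymp}
\erfc(\zeta)=\frac{e^{-\zeta^2}}{\sqrt{\pi}\,\zeta}\Bigl(1+O(|\zeta|^{-2})\Bigr).
\end{equation}

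I would apply \eqref{e:erfc_asymp} with $\zeta:=-\imag z$. Since $\zeta^2=-z^2$, multiplying through by $e^{-z^2}$ yields
\begin{equation*}
w(z)=e^{-z^2}\erfc(-\imag z)=\frac{1}{\sqrt{\pi}\,(-\imag z)}\Bigl(1+O(|z|^{-2})\Bigr)=\frac{\imag}{\sqrt{\pi}\,z}\Bigl(1+O(|z|^{-2})\Bigr),
\end{equation*}
valid whenever $\zeta=-\imag z$ lies in the sector $|\arg\zeta|<3\pi/4$ and $|z|$ is large.

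The only verification needed is that the assumption $z=re^{\imag\theta}$ with $-\pi/8<\theta<9\pi/8$ places $-\imag z$ inside $|\arg\zeta|<3\pi/4$. Since multiplication by $-\imag$ shifts the argument by $-\pi/2$, we have $\arg(-\imag z)=\theta-\pi/2\in(-5\pi/8,\,5\pi/8)$, which is strictly contained in $(-3\pi/4,3\pi/4)$. Hence \eqref{e:erfc_asymp} applies uniformly as $|z|=r\to\infty$ in this sector, and the leading term gives $|w(z)|\le C/|z|$ for $r$ sufficiently large, establishing \eqref{ineq:w(z)}.

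The only potential obstacle is confirming that the asymptotic is \emph{uniform} in $\theta$ on the closed subsector $[-\pi/8+\varepsilon,\,9\pi/8-\varepsilon]$, but this is part of the standard statement of the $\erfc$ asymptotic in any strictly interior subsector of $|\arg\zeta|<3\pi/4$, so no additional work is required.
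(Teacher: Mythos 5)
Your proof is correct, but it takes a genuinely different route from the paper. The paper gives a self-contained, elementary argument: for $\Im(z)\ge 0$ it writes $z=u+\imag v$, deforms $\int_0^{v-\imag u}e^{-\zeta^2}\d\zeta$ along a triangle, and bounds the resulting pieces by hand using the Dawson-integral estimate $\daw(|t|)\le C/|t|$ and the Second Mean Value Theorem; for the remaining slivers $\theta\in[-\pi/8,0]\cup[\pi,9\pi/8]$ it invokes the functional equation $w(-z)=e^{-2z^2}-w(z)$ and the geometric fact that $v\le\varepsilon|u|$ there, so the extra term $e^{-2(u^2-v^2)}$ decays. You instead cite the classical uniform asymptotic expansion $\erfc(\zeta)=e^{-\zeta^2}(\sqrt{\pi}\,\zeta)^{-1}(1+O(|\zeta|^{-2}))$ in $|\arg\zeta|\le 3\pi/4-\delta$ and check that $\arg(-\imag z)=\theta-\pi/2\in(-5\pi/8,5\pi/8)$, which sits strictly inside that sector. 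This is much shorter, uses only a reference the paper already cites (\cite{olver2010nist}), and in fact yields more than the lemma asks for, namely the precise leading behavior $w(z)\sim\imag/(\sqrt{\pi}z)$; the trade-off is reliance on a nontrivial external asymptotic with uniform error bounds rather than a from-scratch computation. One small imprecision in your closing remark: the uniformity you need is not on $\theta$-subsectors $[-\pi/8+\varepsilon,9\pi/8-\varepsilon]$ (the lemma requires the bound on the full open range of $\theta$), but this is harmless because the image of the \emph{entire} interval $(-\pi/8,9\pi/8)$ under $\theta\mapsto\theta-\pi/2$ has closure $[-5\pi/8,5\pi/8]$, which is already a compact subsector of $|\arg\zeta|<3\pi/4$; so the uniform expansion applies with $\delta=\pi/8$ and no shrinkage is needed.
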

\begin{remark} The interval $(-\pi/8,9\pi/8)$ in Lemma~\ref{lem:w(z)} can actually be any $(\theta_1,\theta_2)$ such that $-\pi/4<\theta_1<\theta_2<5\pi/4$.
\end{remark}
\begin{proof}[Proof of Lemma~\ref{lem:w(z)}] There are two situations to be considered, depending on the location of $z$ in $\bbC$.

We first consider the case where $\Im(z)\ge 0$. By writing $z=u+\imag v$, $v\ge 0$, in view of~\eqref{e:erfc},~\eqref{e:erf} and~\eqref{form:w(z)}, we can reexpress $w(z)$ as
\begin{align*}
w(z)=e^{-z^2}\Big(1-\frac{2}{\sqrt{\pi}}\int_0^{v-\imag u}\close e^{-z^2}\d z\Big).
\end{align*}
We consider the contour integral on the triangle curve
$$
D_2:0\dashrightarrow v\dashrightarrow v-\imag u\dashrightarrow 0.
$$
Since $\int_{D_2} e^{-z^2}\d z=0$, then
\begin{align*}
\int_0^{v-\imag u}\close e^{-z^2}\d z&=\int_0^v e^{-z^2}\d z+\int_v^{v-\imag u}\close e^{-z^2}\d z\\
&=\int_0^v e^{-t^2}\d t-\imag\int_0^u e^{-(v-\imag t)^2}\d t.
\end{align*}
It follows that
\begin{align*}
w(z)&=e^{-u^2+v^2-\imag 2uv}\Big(1-\frac{2}{\sqrt{\pi}}\int_0^v e^{-t^2}\d t+\imag\frac{2}{\sqrt{\pi}}\int_0^ue^{-v^2+t^2+\imag 2vt}\d t\Big)\\
&=\frac{2}{\sqrt{\pi}} e^{-u^2+v^2-\imag 2uv}\Big(\int_v^\infty\close e^{-t^2}\d t+\imag\int_0^ue^{-v^2+t^2+\imag 2vt}\d t\Big).
\end{align*}
Therefore,
$$
\frac{\sqrt{\pi}}{2}|w(z)| \le e^{-u^2+v^2}\int_v^\infty\close e^{-t^2}\d t+ e^{-u^2}\Big|\int_0^{u} e^{t^2+\imag 2vt}\d t\Big|=: I_1(u,v)+I_2(u,v).
$$
So, the bound~\eqref{ineq:w(z)} holds provided we can show that
\begin{equation}\label{e:sup_(|u|+v)(I1+I2)<infty}
\sup_{v\ge 0}(|u|+v)(I_1(u,v)+I_2(u,v))<\infty.
\end{equation}
We first consider $I_1(u,v)$. Note that there exists a positive $c>0$ such that, for all $|u|,\, v\ge 0$,
\begin{align*}
e^{-u^2}\le \frac{c}{|u|+1}\quad\text{and}\quad e^{v^2}\int_v^\infty\close e^{-t^2}\d t\le \frac{c}{v+1}.
\end{align*}
Hence, for all $u\in\rbb$ and $v\ge 0$,
\begin{align*}
I_1(u,v)=e^{-u^2+v^2}\int_v^\infty\close e^{-t^2}\d t\le \frac{c}{(|u|+1)(v+1)}\le \frac{c}{|u|+v+1},
\end{align*}
implying
\begin{align}\label{e:sup_(|u|+v)I1<infty}
\sup_{v\ge 0}(|u|+v)I_1(u,v)<\infty.
\end{align}
In regard to $I_2(u,v)$, we invoke~\eqref{ineq:daw(t)<C/t} to estimate
\begin{align}\label{e:|u|I2<infty}
|u|I_2(u,v)=|u|e^{-u^2}\Big|\int_0^{u} e^{t^2+\imag 2vt}\d t\Big|\le |u|e^{-u^2}\int_0^{|u|} e^{t^2}\d t\le c.
\end{align}
To bound $vI_2(u,v)$, it suffices to consider $v\ge 1$. Note that
\begin{align*}
I_2(u,v)&\le e^{-u^2}\Big|\int_0^{u}e^{t^2}\cos(2v t)\d t\Big|+e^{-u^2}\Big|\int_0^{u} e^{t^2}\sin(2v t)\d t\Big|\\
&=e^{-u^2}\Big|\int_0^{|u|}e^{t^2}\cos(2v t)\d t\Big|+e^{-u^2}\Big|\int_0^{|u|} e^{t^2}\sin(2v t)\d t\Big|.
\end{align*}
By Second Mean Value Theorem, for each $v\ge 1$, there exists $0<u_*<|u|$ such that
\begin{align*}
\Big|\int_0^{|u|}e^{t^2}\cos(2v t)\d t\Big|&=\Big|\int_0^{u_*}\close\cos(2v t)\d t+e^{u^2}\int_{u_*}^{|u|}\close\cos(2v t)\d t\Big|\le c\frac{1+e^{u^2}}{v}.
\end{align*}
Likewise,
\begin{align*}
\Big|\int_0^{|u|}e^{t^2}\sin(2v t)\d t\Big|\le c\frac{1+e^{u^2}}{v}.
\end{align*}
Therefore, still for $v\ge 1$,
\begin{align}\label{e:vI2<infty}
v I_2(u,v)\le c.
\end{align}
The bounds \eqref{e:sup_(|u|+v)I1<infty}, \eqref{e:|u|I2<infty} and \eqref{e:vI2<infty} imply \eqref{e:sup_(|u|+v)(I1+I2)<infty}. This establishes~\eqref{ineq:w(z)} for the case $v = \Im(z)\ge 0$.

Alternatively, consider the case $z=re^{\imag \theta}$, $\theta\in [-\pi/8,0]\cup[\pi,9\pi/8]$. In particular, $\Im(z) < 0$. By writing $z=u-\imag v$, $v>0$, we note that
\begin{align*}
\frac{v}{|u|}=|\tan(\theta)|\le \tan\Big(\frac{\pi}{8}\Big)<1.
\end{align*}
In other words, there exists $\varepsilon\in(0,1)$ such that $v\le \varepsilon|u|$. Note that $w(z)$ satisfies the property \cite[expression (3)]{fettis:caslin:cramer:1973}
\begin{align*}
w(-z)=e^{-2z^2}-w(z),\quad\forall z\in\cbb.
\end{align*}
Then, for $v\ge 0$,
\begin{align} \label{ineq:w(u-iv)<e^(-2u^2+2v^2)+|w(-u+iv)|}
w(u-\imag v)=w(-(-u+\imag v))=e^{-2(-u+\imag v)^2}-w(-u+\imag v)\le e^{-2(u^2-v^2)}+|w(-u+\imag v)|.
\end{align}
We invoke~\eqref{ineq:w(z)} for the first case $\Im(z)\ge0$ to see that for all $|u|,\,v\ge0$,
\begin{align} \label{ineq:(|u|+v)|w(-u+iv)|<c}
(|u|+v)|w(-u+\imag v)|\le c.
\end{align}
Also, since $0\le v\le \varepsilon |u|$, $\varepsilon\in(0,1)$, we infer the existence of a (possibly different) positive constant $c>0$ such that
\begin{align}\label{ineq:(|u|+v)e^(-2(u^2-v^2))<c}
(|u|+v)e^{-2(u^2-v^2)}\le(1+\varepsilon)|u|e^{-2(1-\varepsilon)u^2}<c .
\end{align}
We finally combine the estimates~\eqref{ineq:(|u|+v)e^(-2(u^2-v^2))<c} and~\eqref{ineq:(|u|+v)|w(-u+iv)|<c} with~\eqref{ineq:w(u-iv)<e^(-2u^2+2v^2)+|w(-u+iv)|} to establish the desired estimate~\eqref{ineq:w(z)} for the second case $z=re^{\imag \theta}$, $\theta\in [-\pi/8,0]\cup[\pi,9\pi/8]$. This concludes the proof.
\end{proof}

\bibliography{GLE-2D-bib}
\bibliographystyle{abbrv}

\end{document}